\newcommand{\one}{\mathbf{1}}
\newcommand{\mcc}{M\textsuperscript{c}Carthy}
\renewcommand{\MR}[1]{}
\title{Multiplier tests and subhomogeneity of multiplier algebras}
\author[A. Aleman]{Alexandru Aleman}
\address{Lund University, Mathematics, Faculty of Science, P.O. Box 118, S-221 00 Lund, Sweden}
\email{alexandru.aleman@math.lu.se}
\author[M. Hartz]{Michael Hartz}
\address{Fachrichtung Mathematik, Universit\"at des Saarlandes, 66123 Saarbr\"ucken, Germany}
\email{hartz@math.uni-sb.de}
\thanks{M.H. was partially supported by a GIF grant}
\author[J. M\raise.5ex\hbox{c}Carthy]{John E. M\raise.5ex\hbox{c}Carthy}
\address{Department of Mathematics, Washington University in St. Louis, One Brookings Drive,
	St. Louis, MO 63130, USA}
\email{mccarthy@wustl.edu}
\thanks{J.M. was partially supported by National Science Foundation Grant DMS 2054199}
\author[S. Richter]{Stefan Richter}
\address{Department of Mathematics, University of Tennessee, 1403 Circle Drive, Knoxville, TN 37996-1320, USA}
\email{srichter@utk.edu}
\subjclass[2010]{Primary: 46E22; Secondary: 47B32, 47L55}
\keywords{Reproducing kernel Hilbert space, multiplier, subhomogeneous operator algebras}
\date{\today}
\begin{document}

\begin{abstract}
  Multipliers of reproducing kernel Hilbert spaces can be characterized in
  terms of positivity of $n \times n$ matrices analogous to the classical Pick matrix.
  We study for which reproducing kernel Hilbert spaces it suffices to consider matrices of
  bounded size $n$. We connect this problem to the notion of subhomogeneity of non-selfadjoint operator algebras.
  Our main results show that multiplier algebras of many Hilbert spaces of analytic functions,
  such as the Dirichlet space and the Drury--Arveson space,
  are not subhomogeneous, and hence one has to test Pick matrices of arbitrarily large matrix size $n$.
  To treat the Drury--Arveson space, we show that multiplier algebras of certain weighted Dirichlet spaces on the disc
  embed completely isometrically into the multiplier algebra of the Drury--Arveson space.
\end{abstract}

\maketitle

\section{Introduction}

\subsection{Background}

Multiplication operators on Hilbert spaces of analytic functions appear in several
contexts in analysis. In operator theory, they frequently can be used to model
fairly general classes of operators; see for instance \cite{Agler82,SFB+10}. In complex analysis and harmonic analysis,
multiplication operators provide a functional analytic perspective on concrete function theoretic questions; see
for example \cite{Sarason67,SS61}.
With few exceptions, it is often not an easy task to decide which functions are multipliers
of a given Hilbert space of analytic functions.
Function theoretic descriptions of multipliers exist for spaces such as the Dirichlet space \cite{Stegenga80} and
the Drury--Arveson space \cite{ARS08}, but they tend to be difficult to check in practice.
Nevertheless, there is a very general and well known criterion, which we now describe.

Let $X$ be a non-empty set and let $\cH$ be a reproducing kernel Hilbert space on $X$
with reproducing kernel $K$. We will always assume that reproducing
kernels do not vanish on the diagonal, i.e.\ $K(z,z) \neq 0$ for all $z \in X$.
A function $\varphi: X \to \bC$ is said to be a \emph{multiplier of $\cH$} if $\varphi f \in \cH$
for all $f \in \cH$. In this case, the multiplication operator
\begin{equation*}
  M_\varphi: \cH \to \cH, \quad f \mapsto \varphi \cdot f,
\end{equation*}
is bounded by the closed graph theorem, and the \emph{multiplier norm} $||\varphi||_{\Mult(\cH)}$ of $\varphi$ is
defined to be the operator norm of $M_\varphi$.

The characterization of multipliers alluded to above says that a function $\varphi: X \to \bC$
is a multiplier of $\cH$ of multiplier norm at most $1$ if and only if the function
\begin{equation*}
  X \times X \to \bC, \quad (z,w) \mapsto K(z,w) (1 - \varphi(z) \ol{\varphi(w)}),
\end{equation*}
is positive semi-definite; see \cite[Theorem 5.21]{PR16}. More explicitly,
$||\varphi||_{\Mult(\cH)} \le 1$ if and only if for every $n \in \bN$ and every finite subset $\{z_1,\ldots,z_n\} \subset X$ of $n$ points in $X$, the $n \times n$ matrix
\begin{equation}
  \label{eqn:contractive_multiplier}
  \big[ K(z_i,z_j) (1 - \varphi(z_i) \ol{\varphi(z_j)}) \big]_{i,j = 1}^n
\end{equation}
is positive semi-definite. In some special cases, it is not necessary to check this condition for sets
of arbitrary size $n$.
For instance, if $\cH$ is the Hardy space
on the unit disc, then the multipliers of $\cH$ of norm at most $1$ are precisely the analytic
functions on $\bD$ which are bounded in modulus by $1$.
Hence, if we assume a priori that $\varphi$ is analytic, then
it suffices to test positivity of the matrices
in Equation \eqref{eqn:contractive_multiplier}
for all singleton sets, i.e.\ $n=1$ is suffices. Without the analyticity
assumption, a theorem of Hindmarsh \cite{Hindmarsh68} shows that sets of size $n=3$ suffice.

\subsection{\texorpdfstring{$n$-point multiplier norms}{n-point multiplier norms}}
We ask if similar phenomena occur in other classical reproducing kernel Hilbert spaces.
To study this question, we introduce some terminology. We will also
consider operator valued multipliers. If $\cE$ is an auxiliary Hilbert space, we say
that a function $L: X \times X \to \cB(\cE)$ is \emph{n-point positive}
if for every collection $x_1,\ldots,x_n$ of $n$ (not necessarily distinct) points in $X$, the
$n \times n$ matrix $[L(x_i,x_j)]_{i,j=1}^n$ is a positive operator on $\cB(\cE^n)$.
Thus, $L$ is positive if and only if it is $n$-point positive
for all $n \in \bN$.
We furthermore define
the \emph{$n$-point multiplier norm}
of a function $\Phi: X \to \cB(\cE)$ to be
\begin{equation*}
  ||\Phi||_{\Mult(\cH),n} = \inf \{ C \ge 0 : K(z,w)(C^2 - \Phi(z) \Phi(w)^*) \text{ is $n$-point positive} \},
\end{equation*}
which is understood as $+ \infty$ if no such $C$ exists.
We also adopt the convention that $\|\Phi\|_{\Mult(\cH)} = + \infty$ if $\Phi$ is not a multiplier.

The assumption $K(z,z) \neq 0$ for all $z \in X$ implies that
\begin{equation*}
  \|\Phi\|_{\Mult(\cH),1} = \sup_{z \in X} \|\Phi(z)\|_{\cB(\cE)}.
\end{equation*}
Moreover, it is clear that
\begin{equation*}
  \|\Phi\|_{\Mult(\cH),1} \le
  \|\Phi\|_{\Mult(\cH),2} \le \ldots
  \le \|\Phi\|_{\Mult(\cH)},
\end{equation*}
where the quantities are allowed to be infinity.

We now ask:
\begin{quest}
  \label{quest:many_points}
  Does there exist $n \in \bN$ such that
  \begin{enumerate}
    \item
      $\|\Phi\|_{\Mult(\cH)} = \|\Phi\|_{\Mult(\cH),n}$ for all functions $\Phi: X \to \cB(\cE)$
      and all (finite dimensional) Hilbert spaces $\cE$, or
    \item $\|\varphi\|_{\Mult(\cH)} = \|\varphi\|_{\Mult(\cH),n}$ for all functions $\varphi: X \to \bC$, or
    \item $\|\varphi\|_{\Mult(\cH)} \le C \|\varphi\|_{\Mult(\cH),n}$ for some constant $C > 0$
      and all functions $\varphi: X \to \bC$?
  \end{enumerate}
\end{quest}

Clearly, a positive answer answer to (1) implies a positive answer to (2), which in turn implies
a positive answer to (3).
One might also ask for the seemingly weaker property that there exists $n \in \bN$ so that
$\|\varphi\|_{\Mult(\cH),n} < \infty$ implies $\varphi \in \Mult(\cH)$. However,
one easily checks that
the space of all functions $\varphi: X \to \bC$ with $\| \varphi\|_{\Mult(\cH),n} < \infty$
is a Banach space in the norm $\| \cdot\|_{\Mult(\cH),n}$
(for instance using Equation \eqref{eqn:n-point-alt} below). Hence the closed graph theorem shows
that the seemingly weaker property is in fact equivalent to (3) above.

In Section \ref{sec:123}, we will show that finiteness of $\|\varphi\|_{\Mult(\cH),2}$
implies that $\varphi$ is continuous in an appropriate sense. Moreover,
we generalize Hindmarsh's theorem and prove that if $\cH$ is a space of holomorphic
functions, then finiteness of $\|\varphi\|_{\Mult(\cH),3}$ implies that $\varphi$ is holomorphic.

It is a recurring phenomenon in functional analysis that properties that hold at every matrix level
have strong consequences. This is also the case in Question \ref{quest:many_points}.
We will show that in a fairly general setting of spaces of holomorphic functions on the Euclidean unit
ball $\bB_d \subset \bC^d$, a positive answer to part (1) of Question \ref{quest:many_points}
only happens in the trivial case when the multiplier norm is the supremum norm.
We remark, however, that it is in general not true that $n$-point multiplier norms
are comparable to the supremum norm (see Corollary \ref{cor:dirichlet_two_point} for
the classical Dirichlet space). Instead, we use Arveson's theory of boundary representations
to prove our result addressing part (1) of Question \ref{quest:many_points}.

More precisely, a regular unitarily invariant space is a reproducing kernel Hilbert space on $\bB_d$ with reproducing
kernel of the form
\begin{equation*}
  K(z,w) = \sum_{n=0}^\infty a_n \langle z,w \rangle^n,
\end{equation*}
where $a_0 = 1$, $a_n > 0$ for all $n \in \bN$ and $\lim_{n \to \infty} \frac{a_n}{a_{n+1}} = 1$.
Regular unitarily invariant spaces are a frequently studied class of reproducing kernel
Hilbert spaces; see for instance \cite{GHX04,GRS02}. A discussion of this condition
can be found in \cite{BHM17}. Here, we simply mention
that the Hardy space $H^2(\bD)$, the Drury--Arveson space $H^2_d$, the Dirichlet space $\cD$,
the standard weighted Dirichlet spaces $\cD_a$ as well as standard weighted Dirichlet and Bergman
spaces on $\bB_d$ are regular unitarily invariant spaces. Even the somewhat pathological Salas space \cite{AHM+17a}
belongs to this class.

\begin{thm}
  \label{thm:ci_n_point_intro}
  Let $d \in \bN$ and let $\cH$ be a regular unitarily invariant space on $\bB_d$.
  Then the following are equivalent:
  \begin{enumerate}[label=\normalfont{(\roman*)}]
    \item There exists $n \in \bN$ so that
      $\|\Phi\|_{\Mult(\cH)} = \|\Phi\|_{\Mult(\cH),n}$ for all $\Phi \in \Mult(\cH \otimes \cE)$
      and all finite dimensional Hilbert spaces $\cE$.
    \item $\Mult(\cH) = H^\infty(\bB_d)$ and $\|\Phi\|_{\Mult(\cH \otimes \cE)} = \sup_{z \in \bB_d} \|\Phi(z)\|$
    for all $\Phi \in \Mult(\cH \otimes \cE)$ and all finite dimensional Hilbert spaces $\cE$.
  \end{enumerate}
\end{thm}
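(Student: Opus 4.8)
The plan is to establish the two implications separately; (ii)$\Rightarrow$(i) is immediate, and (i)$\Rightarrow$(ii) carries all the content. For (ii)$\Rightarrow$(i) take $n=1$: under (ii) every $\Phi\in\Mult(\cH\otimes\cE)$ satisfies $\|\Phi\|_{\Mult(\cH)}=\sup_{z\in\bB_d}\|\Phi(z)\|=\|\Phi\|_{\Mult(\cH),1}$ by the identity recorded before Question~\ref{quest:many_points}, so the chain of inequalities there collapses. For (i)$\Rightarrow$(ii) the first step is to reinterpret the $n$-point multiplier norm. For a finite set $F\subseteq\bB_d$ let $\cH_F$ be the reproducing kernel Hilbert space on $F$ with kernel $K|_{F\times F}$ (so $\dim\cH_F=|F|$) and let $\rho_F\colon\Mult(\cH)\to\Mult(\cH_F)$ be the restriction homomorphism; the Pick-matrix computation underlying the characterization recalled in the introduction shows that $\|\Phi\|_{\Mult(\cH),n}=\sup\{\|\Phi|_F\|_{\Mult(\cH_F\otimes\cE)}\colon |F|\le n\}$. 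Hence (i), read at all matrix levels, says exactly that $\bigoplus_{|F|\le n}\rho_F$ is completely isometric on $\Mult(\cH)$, and since $\dim\cH_F\le n$ this exhibits $\Mult(\cH)$ — and therefore also the norm-closed subalgebra $\mathcal{A}$ generated by the coordinate multipliers $M_{z_1},\dots,M_{z_d}$ — completely isometrically inside an $n$-subhomogeneous $C^{*}$-algebra. Consequently the $C^{*}$-envelope $C^{*}_{e}(\mathcal{A})$ is $n$-subhomogeneous; in particular it has no infinite-dimensional irreducible representation.

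The second step reduces (ii) to the single assertion that $C^{*}_{e}(\mathcal{A})$ is commutative. The character space of $\mathcal{A}$ is $\overline{\bB_d}$ with Shilov boundary $\partial\bB_d$, so $C^{*}_{e}(\mathcal{A})$ is commutative precisely when $C^{*}_{e}(\mathcal{A})=C(\partial\bB_d)$, equivalently when the norm on $M_k(\mathcal{A})$ is the supremum norm over $\overline{\bB_d}$ for all $k$. A standard Ces\`aro-means argument — Fej\'er-kernel positivity on the ball together with a weak-$*$ limit of multiplication operators — then upgrades this to $\Mult(\cH)=H^{\infty}(\bB_d)$ with $\|\Phi\|_{\Mult(\cH\otimes\cE)}=\sup_{z}\|\Phi(z)\|$, which is exactly (ii). So it suffices to prove that $C^{*}_{e}(\mathcal{A})$ is commutative.

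The heart of the argument, carried out with Arveson's boundary representation theory, goes as follows. For a regular unitarily invariant space the tuple $M_z=(M_{z_1},\dots,M_{z_d})$ is irreducible and $\mathcal{K}(\cH)\subseteq C^{*}(M_z)$: by $U(d)$-invariance $\sum_j M_{z_j}M_{z_j}^{*}$ is a scalar $c_m>0$ on the degree-$m$ homogeneous component for $m\ge1$ and $0$ on the constants, with $c_m\to1$, so $0$ is isolated in its spectrum, the rank-one projection onto the constants lies in $C^{*}(M_z)$, and irreducibility then forces $\mathcal{K}(\cH)\subseteq C^{*}(M_z)$. Since $\mathcal{A}\hookrightarrow\cB(\cH)$ is completely isometric, $C^{*}_{e}(\mathcal{A})=C^{*}(M_z)/\mathcal{J}$ for the Shilov boundary ideal $\mathcal{J}$; if $\mathcal{J}=0$ then the identity representation is an infinite-dimensional irreducible representation of $C^{*}_{e}(\mathcal{A})$, contradicting the first step, so $\mathcal{J}\ne0$. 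Because $M_z$ is irreducible, $\mathcal{K}(\cH)$ is contained in every nonzero ideal of $C^{*}(M_z)$, whence $\mathcal{K}(\cH)\subseteq\mathcal{J}$ and the quotient map $q\colon C^{*}(M_z)\to C^{*}(M_z)/\mathcal{K}(\cH)$ restricts to a complete isometry on $\mathcal{A}$; thus $C^{*}_{e}(\mathcal{A})$ is a quotient of $C^{*}(M_z)/\mathcal{K}(\cH)$. When $d=1$ this finishes the proof: the self-commutator of the weighted shift $M_z$ has entries tending to $0$, so it is compact, hence $C^{*}(M_z)/\mathcal{K}(\cH)\cong C(\partial\bD)$ is commutative, and therefore so is $C^{*}_{e}(\mathcal{A})$.

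The step I expect to be the main obstacle is the passage from $d=1$ to $d\ge2$. Regular unitarily invariant spaces on $\bB_d$ need not be essentially normal, so $C^{*}(M_z)/\mathcal{K}(\cH)$ need not be commutative and cannot in general be identified with $C(\partial\bB_d)$; one must therefore either iterate the boundary analysis down the finitely many layers of the primitive-ideal stratification of the subhomogeneous algebra $C^{*}_{e}(\mathcal{A})$ — at each stage either producing a forbidden infinite-dimensional boundary representation or peeling off a layer of compacts, using the gauge action to control the process — or reduce to the disc altogether by the completely isometric embeddings of weighted Dirichlet multiplier algebras into $\Mult(\cH)$ mentioned in the abstract, transferring the failure of $n$-subhomogeneity from a suitable $\Mult(\mathcal{D}_a)$ to $\Mult(\cH)$. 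For the Drury--Arveson space itself the situation is painless, since $H^{2}_d$ is essentially normal so $C^{*}(M_z)/\mathcal{K}=C(\partial\bB_d)$, and the strict inequality $\|p\|_{\Mult(H^{2}_d)}>\|p\|_{\infty,\bB_d}$ for some polynomial when $d\ge2$ forces $q|_{\mathcal{A}}$ not to be isometric, hence $\mathcal{J}=0$ and (i) fails.
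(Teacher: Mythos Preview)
Your framework is essentially the paper's: reduce (i)$\Rightarrow$(ii) to showing that the $C^{*}$-envelope of the polynomial algebra $\mathcal{A}=A(\mathcal{H})$ is commutative, then upgrade from $A(\mathcal{H})=A(\bB_d)$ to $\Mult(\mathcal{H})=H^{\infty}(\bB_d)$ by dilation/averaging. The paper likewise argues that subhomogeneity forces the quotient by the compacts to be completely isometric on $\mathcal{A}$, then identifies that quotient with $C(\partial\bB_d)$.

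The gap is precisely at the point you flag as the ``main obstacle'', but for the opposite reason: there is no obstacle. For \emph{every} regular unitarily invariant space on $\bB_d$ one has the short exact sequence
\[
0 \longrightarrow \mathcal{K}(\mathcal{H}) \longrightarrow C^{*}(A(\mathcal{H})) \longrightarrow C(\partial\bB_d) \longrightarrow 0,
\]
see Guo--Hu--Xu, \emph{Toeplitz algebras, subnormal tuples and rigidity on reproducing $C[z_1,\dots,z_n]$-modules}, J.\ Funct.\ Anal.\ 210 (2004), Theorem~4.6. The regularity hypothesis $a_n/a_{n+1}\to 1$ is exactly what makes the self-commutators $[M_{z_i},M_{z_j}^{*}]$ compact in all dimensions, so $C^{*}(M_z)/\mathcal{K}(\mathcal{H})\cong C(\partial\bB_d)$ is always commutative. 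Your $d=1$ computation of the compact self-commutator is a special case of this. With this input, your argument that $\mathcal{J}\ne 0$ forces $\mathcal{K}(\mathcal{H})\subseteq\mathcal{J}$, hence $C^{*}_{e}(\mathcal{A})$ is a quotient of $C(\partial\bB_d)$ and thus commutative, goes through verbatim for all $d$. The paper phrases the same step via Arveson's boundary theorem: if the restriction $A(\mathcal{H})\to C(\partial\bB_d)$ were not completely isometric, the identity representation would be a boundary representation, contradicting subhomogeneity of $C^{*}_{e}(\mathcal{A})$.

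Your proposed workarounds are unnecessary and in part unavailable: the embeddings of weighted Dirichlet multiplier algebras are constructed in the paper only for the specific scale $\mathcal{D}_a(\bB_d)$, not for arbitrary regular unitarily invariant spaces, so the ``reduce to the disc'' route would not prove the theorem as stated; and the layer-by-layer stratification argument is not needed once essential normality is known.
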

Since the $1$-point multiplier norm is the supremum norm, (ii) clearly implies that (i) holds with $n=1$.
A stronger version of this result will be proved in Section \ref{sec:ci_subhom}.
There, we also record applications to concrete spaces of functions on the ball.

Regarding the remaining parts of Question \ref{quest:many_points},
we show that for many concrete spaces on the unit disc or on the unit ball, even Part (3)
of Question \ref{quest:many_points} has a negative answer.
For $a \in (0,\infty)$, let $\cD_a(\bB_d)$ be the reproducing kernel Hilbert space on $\bB_d$
with kernel 
\begin{equation*}
  \frac{1}{(1 - \langle z,w \rangle)^{a}}.
\end{equation*}
We also let $\cD_0(\bB_d)$ be the space corresponding
to the kernel
\begin{equation*}
\frac{1}{\langle z,w \rangle} \log \Big( \frac{1}{1 - \langle z,w \rangle} \Big).
\end{equation*}
Equivalently, for $a > 0$, we have
\begin{equation*}
  \mathcal{D}_a(\mathbb{B}_d) = \Big\{ f = \sum_{\alpha \in \mathbb{N}_0^d} \widehat{f}(\alpha) z^\alpha \in \mathcal{O}(\mathbb{B}_d): \|f\|^2
  = \sum_{\alpha \in \mathbb{N}_0^d} \frac{\alpha! \Gamma(a)}{\Gamma(a+|\alpha|)} |\widehat{f}(\alpha)|^2 < \infty \Big\}
\end{equation*}
and
\begin{equation*}
  \mathcal{D}_0(\mathbb{B}_d) = \Big\{ f = \sum_{\alpha \in \mathbb{N}_0^d} \widehat{f}(\alpha) z^\alpha \in \mathcal{O}(\mathbb{B}_d): \|f\|^2
  = \sum_{\alpha \in \mathbb{N}_0^d} (|\alpha| + 1) \frac{\alpha!}{|\alpha|!}|\widehat{f}(\alpha)|^2 < \infty \Big\};
\end{equation*}
the equivalence can be seen by expanding the reproducing kernel in a power series;
see for instance the proof of Theorem 41 in \cite{ZZ08}.

Then $\cD_1(\bD) = H^2$ is the Hardy space, $\cD_0(\bD)= \cD$ is the classical Dirichlet space and the spaces $\cD_a(\bD) = \cD_a$ are standard weighted
Dirichlet spaces on the unit disc for $a \in (0,1)$. Moreover, $\cD_1(\bB_d) = H^2_d$ is the Drury--Arveson space.

\begin{thm}
  \label{thm:top_n_point_intro}
    Let $d \in \bN$ and let $0 \le a < \frac{d+1}{2}$. Then there do not
    exist a constant $C > 0$ and $n \in \bN$ so that
  \begin{equation*}
    \|\varphi\|_{\Mult(\cD_a(\bB_d))} \le C \| \varphi\|_{\Mult(\cD_a(\bB_d)),n}
  \end{equation*}
  for all $\varphi \in \Mult(\cD_a(\bB_d))$.
\end{thm}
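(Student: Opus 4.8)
The plan is to reduce the statement for the ball $\bB_d$ to a statement on the disc $\bD$, and then to exhibit, for each $n$, an explicit multiplier whose genuine multiplier norm exceeds its $n$-point multiplier norm by an arbitrarily large factor. Concretely, I would first treat the disc case $d=1$, $0 \le a < 1$ (together with $a=1$, the Hardy space, being excluded since there the two-point norm already suffices via Hindmarsh). For the range $1 \le a < \frac{d+1}{2}$ on the ball, the natural move is to use the completely isometric embedding of $\Mult(\cD_a(\bD))$-type algebras into $\Mult(H^2_d)$ promised in the abstract: a completely isometric embedding preserves all $n$-point multiplier norms as well as the full multiplier norm, so a failure of comparability downstairs forces the same failure upstairs. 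Thus the crux is the disc.

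On the disc, the key object is a sequence of functions $\varphi_N$ built as sums of many ``bumps'' located at points $z_1, \dots, z_N$ approaching the boundary, chosen so that (a) the true multiplier norm $\|\varphi_N\|_{\Mult(\cD_a)}$ grows (like $\sqrt{N}$, or at least unboundedly), because the capacity-type / Carleson-measure condition characterizing $\Mult(\cD_a)$ (Stegenga-type conditions) is violated more and more as the bumps interact, while (b) the $n$-point multiplier norm stays bounded, because any $n$ of the $N$ points, once $N$ is large, ``see'' at most a bounded amount of mass: the relevant $n \times n$ Pick-type matrix $[K(z_i,z_j)(C^2 - \varphi_N(z_i)\overline{\varphi_N(z_j)})]$ can be made positive semidefinite for a fixed $C$ independent of $N$. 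A clean way to organize (b) is to note that $\|\varphi\|_{\Mult(\cH),n}$ equals the supremum over $n$-point subsets $F \subset X$ of $\|\varphi|_F\|_{\Mult(\cH|_F)}$, where $\cH|_F$ is the (finite-dimensional) restriction space; one then shows the restricted multiplier norm is controlled uniformly. The standard template here is the one used for showing $\Mult(\cD)$ is not a ``$\mathcal{Q}$-algebra'' or fails various finite-dimensional characterizations; the weights $\frac{\alpha!\,\Gamma(a)}{\Gamma(a+|\alpha|)}$ put all these spaces in the regime where $\Mult$ is strictly smaller than $H^\infty$ precisely when $a < \frac{d+1}{2}$ (the threshold where the kernel stops being, roughly, a complete Pick kernel with enough decay / where Carleson measures become genuinely capacitary), which is why that is the exact range in the statement.

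The main obstacle I anticipate is the lower bound (a): producing functions whose \emph{full} multiplier norm provably blows up while remaining controllable enough that one can still compute or bound the $n$-point norm in (b). One expects to need a quantitative form of the failure of the Carleson/capacity condition — e.g. choosing the bump centers to be a ``tree-like'' or lacunary-in-scale configuration so that the total nonlinear capacity of the union is small relative to the measure, forcing a large Carleson constant, hence a large multiplier norm of the associated function. Balancing the two requirements (enough interaction among all $N$ points to force (a), little enough interaction among any $n$ of them to keep (b) bounded) is the delicate part; the parameter $a$ enters through the exponent governing the capacity, and the condition $a < \frac{d+1}{2}$ should be exactly what makes the capacity of a point (or of a small cap) positive-but-summable in the right way.

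A cleaner alternative for (a), which I would try first, is to avoid capacity estimates entirely: pick $\varphi_N$ so that $M_{\varphi_N}$ has a large norm for a \emph{soft} reason, e.g. $\varphi_N$ maps a normalized reproducing kernel to something of large norm, i.e. estimate $\|M_{\varphi_N}^* k_w\|/\|k_w\| = \big(\sum |\varphi_N(w)|^2\text{-type contributions}\big)$ directly, using that $\sum_{j} |\langle k_{z_j}, k_w\rangle|^2 / \|k_w\|^2$ can be made large. If that works, the proof becomes: (1) $n$-point norms bounded by a direct $n\times n$ Schur-complement argument using near-orthogonality of far-apart kernels; (2) full norm unbounded by testing on a single well-chosen vector; (3) transfer to the ball via the completely isometric embedding of the abstract. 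I would present it in that order, flagging the embedding result and Theorem~\ref{thm:ci_n_point_intro}'s companion on the disc as the inputs, and reserving the technical heart for step~(1)'s uniform-in-$N$ positivity estimate.
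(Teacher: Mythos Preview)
Your high-level architecture---prove the disc case first, then push it up to $\bB_d$ via an embedding of multiplier algebras---matches the paper's. But the disc argument you sketch is both different from the paper's and, as written, has a real gap.

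The gap is in step (b). You acknowledge that $\|\varphi\|_{\Mult(\cH),n}$ is the supremum of $\|\varphi|_F\|_{\Mult(\cH|_F)}$ over \emph{all} $n$-point sets $F\subset\bD$, but your mechanism for bounding it (``any $n$ of the $N$ points see at most a bounded amount of mass'', or a ``Schur-complement argument using near-orthogonality of far-apart kernels'') only addresses sets $F$ drawn from the bump centers. For an arbitrary $F$, the values $\varphi_N(z_i)$ of a sum-of-bumps function are not under control, and neither is the corresponding $n\times n$ Pick matrix. Nothing in the proposal explains how to handle generic $F$, and I do not see how to make the bump construction yield a uniform bound on the $n$-point norm without essentially reproving the paper's key lemma.

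The paper sidesteps this entirely. For $0<a<1$ it shows (Lemma~\ref{lem:n_point}, Corollary~\ref{cor:many_points}) that the $n$-point multiplier norm on $\cD_a$ is \emph{comparable to the supremum norm}: given any $f\in H^\infty$ with $\|f\|_\infty\le 1$ and any $n$ points $z_1,\dots,z_n$, classical Nevanlinna--Pick interpolation replaces $f$ on $\{z_1,\dots,z_n\}$ by $\lambda B$ with $B$ a Blaschke product of degree $\le n-1$, and since $\Aut(\bD)$ is a bounded subset of $\Mult(\cD_a)$ one gets $\|f\|_{\Mult(\cD_a),n}\le C^{n-1}\|f\|_\infty$. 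The test functions witnessing failure of comparability with the full multiplier norm are then simply $\varphi=z^m$, since $\|z^m\|_{\Mult(\cD_a)}\ge\|z^m\|_{\cD_a}\approx (m+1)^{(1-a)/2}\to\infty$ while $\|z^m\|_\infty=1$. No bump constructions, no capacity estimates. For $a=0$ the argument is the same in spirit but needs a quantitative bound on $\|\theta(rz)\|_{\Mult(\cD)}$ for $\theta\in\Aut(\bD)$ (Lemma~\ref{lem:dirichlet_auto_multiplier}), since $\Aut(\bD)$ is unbounded in $\Mult(\cD)$.

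One smaller point: the transfer to the ball is a bit more intricate than you suggest. The relevant embedding (Proposition~\ref{prop:besov_mult_embed}) sends $\Mult(\cD_{a-(d-1)/2})$ into $\Mult(\cD_a(\bB_d))$, so it handles the window $\frac{d-1}{2}\le a<\frac{d+1}{2}$ directly; the full range $0\le a<\frac{d+1}{2}$ then follows by induction on $d$ using the trivial-extension embedding $\Mult(\cD_a(\bB_{d}))\hookrightarrow\Mult(\cD_a(\bB_{d+1}))$ of Lemma~\ref{lem:first_proj}.
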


Notice that this result applies in particular to the standard weighted Dirichlet spaces $\cD_a(\bD)$
for $0 \le a < 1$ and to the Drury--Arveson space $H^2_d$ for $d \ge 2$.
We will obtain Theorem \ref{thm:top_n_point_intro} as a special case of Theorem \ref{thm:top_subhom_intro} below.

In fact, for the spaces $\cD_a(\bD)$, we study the $n$-point multiplier norm in more detail.
We show that the $n$-point multiplier norm on $\cD_a(\mathbb{D})$ is comparable to the supremum norm for $a > 0$;
see Corollary \ref{cor:many_points}.
This argument also provides a direct proof of Theorem \ref{thm:top_n_point_intro}
for these spaces.
For the classical Dirichlet space, the $n$-point multiplier norm turns out to be neither comparable
to the supremum norm nor to the full multiplier norm; see Lemma \ref{cor:dirichlet_two_point} and Proposition
\ref{prop:dirichlet_subhom}.

\subsection{Subhomogeneity}
Question \ref{quest:many_points}, at least when asked for a function $\varphi$ or $\Phi$ which is
a priori assumed to be a multiplier,
can be reformulated in representation theoretic terms.
This reformulation connects Question \ref{quest:many_points} to the property
of subhomogeneity of operator algebras, but it is also useful for the sole purpose of understanding
Question \ref{quest:many_points}, as for instance subhomogeneity passes to subalgebras
and is preserved by isomorphisms.

To explain this connection, recall that $\Mult(\cH)$ is a unital (non-selfadjoint) operator algebra,
via identifying a multiplier with its associated multiplication operator.
If $\cA \subset \cB(\cH)$ is an operator algebra, elements of $M_r(\cA)$
can be regarded as operators on $\cH^r$. This identification makes it possible to endow $M_r(\cA)$
with a norm. In the case of multiplier algebras, $M_r(\Mult(\cH))$ is identified with $\Mult(\cH \otimes \bC^r)$.
If $\cA$ and $\cB$ are operator algebras, a linear map $\pi: \cA \to \cB$ induces
linear maps $\pi^{(r)} : M_r(\cA) \to M_r(\cB)$, defined by applying $\pi$ entrywise.
The linear map $\pi$ is said to be completely contractive if each $\pi^{(r)}$ is contractive,
and completely isometric if each $\pi^{(r)}$ is isometric.

Suppose now that $F = \{z_1,\ldots,z_n\}$ is a finite subset of $X$ and let $\cH \big|_F$
be the restriction of $\cH$ to $F$, i.e.\ the reproducing kernel Hilbert space
on $F$ with reproducing kernel $K \big|_{F \times F}$ (see \cite[Part I, Section 6]{Aronszajn50}).
Then
\begin{equation}
  \label{eqn:n-point-alt}
  \|\Phi\|_{\Mult(\mathcal{H}),n} = \sup_{|F| \le n} \| \Phi \big|_F \|_{\Mult( (\mathcal{H} |_F) \otimes \mathcal{E})}
\end{equation}
for every function $\Phi: X \to \mathcal{B}(\mathcal{E})$.
Thus, if we consider the unital completely contractive (u.c.c.) homomorphism
\begin{equation*}
  \pi_F: \Mult(\cH) \to \Mult(\cH \big|_F), \quad \varphi \mapsto \varphi \big|_F,
\end{equation*}
then for $\Phi \in \Mult(\mathcal{H} \otimes \mathbb{C}^r)$, we have
\begin{equation}
  \label{eqn:subhom}
  \|\Phi\|_{\Mult(\cH),n} = \sup_{|F| \le n} \| \pi_F^{(r)} (\Phi) \|.
\end{equation}
On the other hand,
\begin{equation*}
  \|\Phi\|_{\Mult(\cH)} = \sup_{|F| < \infty} \| \pi_F^{(r)} (\Phi) \|.
\end{equation*}

Note that $\dim(\cH \big|_F) \le |F|$. In particular, we see that $\Mult(\cH)$
is a \emph{residually finite-dimensional} operator algebra, which means that
for every $r \in \bN$ and every $\Phi \in M_r(\Mult(\cH))$,
\begin{equation*}
  \|\Phi\| = \sup \{ \|\pi^{(r)} (\Phi) \|: \pi: \Mult(\cH) \to M_n \text{ is a u.c.c.\ homomorphism}, n \in \bN \}.
\end{equation*}
Question \ref{quest:many_points} is then closely related to the question whether in the above supremum,
it suffices to consider representations into $M_n$ for uniformly bounded values of $n$.

The notion of residual finite-dimensionality
was originally defined for $C^*$-algebras and plays a crucial role in this theory;
see for instance \cite{Archbold95} and the references given in the introduction.
Residually finite-dimensional operator algebras of functions were studied in \cite{MP10},
where it was also observed that multiplier algebras are residually finite-dimensional. 
For residual finite-dimensionality of more general non-selfadjoint operator algebras,
see \cite{CR18}.

In the theory of $C^*$-algebras, a significant strengthening of the property of residual finite-dimensionality
is the notion of \emph{subhomogeneity}; see for instance \cite[Section IV.1.4]{Blackadar06}.
A unital $C^*$-algebra $\cA$ is said to be $n$-subhomogeneous if every irreducible $*$-representation of $\cA$
acts on a Hilbert space of dimension at most $n$. For $C^*$-algebras,
this is equivalent to saying that for all $a \in \cA$,
\begin{equation*}
  \|a\| = \sup \{ \|\pi(a) \| : \pi: \cA \to M_k \text{ is a unital $*$-homomorphism, } k \le n \};
\end{equation*}
see the discussion at the beginning of Section \ref{sec:ci_subhom}.

We therefore make the following definitions.
\begin{defn}
  Let $\cA$ be a unital operator algebra and let $n \in \bN$.
  \begin{enumerate}
    \item $\cA$ is \emph{completely isometrically} $n$-subhomogeneous if for all $r \in \bN$ and all $A \in M_r(\cA)$,
      \begin{equation*}
        \|A\| = \sup\{ \|\pi^{(r)}(A) \| \},
      \end{equation*}
      where the supremum is taken over all u.c.c.\ homomorphisms $\pi: \cA \to M_k$ and all $k \le n$.
    \item $\cA$ is \emph{isometrically} $n$-subhomogeneous if for all $a \in \cA$,
      \begin{equation*}
        \|a\| = \sup\{ \|\pi(a) \| \},
      \end{equation*}
      where the supremum is taken over all unital contractive homomorphisms $\pi: \cA \to M_k$ and all $k \le n$.
    \item $\cA$ is \emph{topologically} $n$-subhomogeneous if there exist constants $C_1,C_2 > 0$ so that
      for all $a \in \cA$,
      \begin{equation*}
        \|a\| \le C_1 \sup\{ \|\pi(a) \| \},
      \end{equation*}
      where the supremum is taken over all unital homomorphisms $\pi: \cA \to M_k$ with $\|\pi\| \le C_2$
      and all $k \le n$.
  \end{enumerate}
  We say that $\cA$ is completely isometrically / isometrically / topologically subhomogeneous
  if it is completely isometrically / isometrically / topologically $n$-subhomogeneous
  for some $n \in \bN$.
\end{defn}

Thus, Question \ref{quest:many_points} leads us to ask the following
more general question for a reproducing kernel Hilbert space $\cH$.

\begin{quest}
  \label{quest:subhomogeneous}
  Does there exist $n \in \bN$ so that
  \begin{enumerate}
    \item $\Mult(\cH)$ is completely isometrically $n$-subhomogeneous, or
    \item $\Mult(\cH)$ is isometrically $n$-subhomogeneous, or
    \item $\Mult(\cH)$ is topologically $n$-subhomogeneous?
  \end{enumerate}
\end{quest}

Equation \eqref{eqn:subhom} shows that for $j=1,2,3$, a positive answer
to part (j) of Question \ref{quest:many_points} implies a positive answer
to part (j) of Question \ref{quest:subhomogeneous} (for the same value of $n$).

We then show the following stronger version of Theorem \ref{thm:ci_n_point_intro}
in Theorem \ref{thm:ci_subhom}.

\begin{thm}
  \label{thm:ci_subhom_intro}
  Let $\cH$ be a regular unitarily invariant space on $\bB_d$.
  Then $\Mult(\cH)$ is completely isometrically subhomogeneous if and only if $\Mult(\cH) = H^\infty(\bB_d)$
  completely isometrically.
\end{thm}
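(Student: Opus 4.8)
The plan is to prove the nontrivial implication: if $\Mult(\cH)$ is completely isometrically $n$-subhomogeneous for some $n$, then $\Mult(\cH) = H^\infty(\bB_d)$ completely isometrically. The converse is trivial since $\Mult(H^\infty(\bB_d)) = H^\infty(\bB_d)$ is a uniform algebra, hence completely isometrically $1$-subhomogeneous by the supremum-norm characterization. The key observation is that complete isometric $n$-subhomogeneity of $\Mult(\cH)$ says exactly that the restriction maps $\pi_F$ to $n$-point restriction spaces recover the full completely contractive norm; equivalently, the unital completely contractive homomorphisms $\Mult(\cH) \to M_k$ with $k \le n$ are \emph{completely norming}. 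My strategy is to pass to the C*-envelope and use Arveson's boundary representation theory: a completely isometric $n$-subhomogeneous unital operator algebra has a C*-envelope that is $n$-subhomogeneous as a C*-algebra, so in particular every boundary representation (in fact every irreducible representation of the C*-envelope) acts on a space of dimension at most $n$.

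First I would set up the C*-envelope $C^*_e(\Mult(\cH))$ and recall that for a regular unitarily invariant space on $\bB_d$, the multiplier algebra, together with the tuple $M_z = (M_{z_1}, \ldots, M_{z_d})$, generates a C*-algebra whose structure is governed by the essential normality / Toeplitz-type behavior of the $M_{z_j}$; the crucial point, which I would extract from the literature on regular unitarily invariant spaces (e.g.\ \cite{BHM17} and the essential normality results there), is that the commutators $[M_{z_i}^*, M_{z_j}]$ are compact and that the C*-algebra contains the compacts $\cK(\cH)$, with quotient the commutative C*-algebra $C(\partial \bB_d)$. Consequently $C^*_e(\Mult(\cH))$ has irreducible representations of \emph{every} finite dimension coming from the compacts: the identity representation of $\cB(\cH)$ restricted to the irreducible C*-algebra generated inside it is infinite-dimensional as soon as $\cH$ is infinite-dimensional, which it always is. More directly, the single representation $\mathrm{id}: \Mult(\cH) \hookrightarrow \cB(\cH)$ is already a boundary representation whenever $M_z$ has no nontrivial reducing subspaces and the Shilov ideal is trivial — and if $\cH$ is infinite dimensional this alone contradicts $n$-subhomogeneity of the C*-envelope. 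So the only way to avoid a contradiction is that $\cH$ is finite dimensional, which for a regular unitarily invariant space forces $d$... no — one must instead conclude that the identity representation is \emph{not} a boundary representation, i.e.\ it is already matched in norm by the finite-dimensional ones, which by a dilation argument forces $\Mult(\cH)$ to be a uniform algebra.

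Let me restructure the decisive step, since that is where the real content lies. The clean route is: complete isometric $n$-subhomogeneity implies that the finite-dimensional u.c.c.\ representations $\Mult(\cH) \to M_k$, $k \le n$, are completely norming, hence by Arveson–Hamana the C*-envelope $C^*_e(\Mult(\cH))$ is a quotient of a C*-algebra generated by representations of dimension $\le n$, so $C^*_e(\Mult(\cH))$ is $n$-subhomogeneous. Now I invoke that for a regular unitarily invariant space the C*-algebra generated by $\Mult(\cH)$ acting on $\cH$ has the compact operators $\cK(\cH)$ in it (essential normality plus irreducibility of $M_z$, which holds because the constants are cyclic and the only invariant subspaces are large); since $\cH$ is infinite dimensional, $\cK(\cH)$ has irreducible representations of arbitrarily large dimension, and $\cK(\cH)$ being an ideal forces these up to $C^*_e$ — contradicting $n$-subhomogeneity unless the compacts are \emph{not} detected by the C*-envelope, i.e.\ the Shilov ideal contains $\cK(\cH)$. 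But the Shilov ideal containing $\cK(\cH)$ means the quotient map $\cB(\cH) \to \cB(\cH)/\cK(\cH)$ is completely isometric on $\Mult(\cH)$, and the image of $\Mult(\cH)$ in the Calkin algebra is a commutative operator algebra (commutators are compact), hence a uniform algebra; identifying its maximal ideal space with $\partial \bB_d$ (via the characters, which are the evaluations $M_z \mapsto \zeta$, $\zeta \in \partial \bB_d$) and noting $\Mult(\cH) \subset H^\infty(\bB_d)$ always, one gets that the multiplier norm equals the supremum norm over $\partial \bB_d$, hence over $\bB_d$, and that $\Mult(\cH) = H^\infty(\bB_d)$ completely isometrically by a normal-families / Montel argument showing every bounded holomorphic function is a multiplier.

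The main obstacle I anticipate is the assertion that the C*-algebra generated by $\Mult(\cH)$ on a regular unitarily invariant space contains the compacts and has a well-understood essential structure — this is exactly the point where regularity of the space ($a_0 = 1$, $a_n > 0$, $a_n/a_{n+1} \to 1$) is used, via the fact that such $M_z$ are essentially normal with essential spectrum $\partial \bB_d$; I would cite \cite{BHM17} for this and for the identification of the characters of the resulting Calkin-level algebra with $\partial \bB_d$. A secondary technical point is going from ``$\Mult(\cH)$ embeds completely isometrically in a uniform algebra of functions on $\partial \bB_d$'' to the honest statement $\Mult(\cH) = H^\infty(\bB_d)$ with equality of norms: here one argues that a bounded holomorphic $\varphi$ on $\bB_d$ extends to boundary values defining an element of that uniform algebra of matching norm, then reverses the multiplier characterization; this is routine once the C*-envelope computation is in place, and I expect to handle it exactly as in the stronger Theorem \ref{thm:ci_subhom} referenced in the text.
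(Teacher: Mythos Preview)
Your approach is essentially the paper's: use that complete isometric $n$-subhomogeneity forces the $C^*$-envelope to be $n$-subhomogeneous (Proposition~\ref{prop:boundary_subhom}), combine this with the Toeplitz-type short exact sequence $0 \to \cK(\cH) \to C^*(A(\cH)) \to C(\partial\bB_d) \to 0$ for regular unitarily invariant spaces to conclude that the quotient by the compacts is completely isometric on the algebra, hence the multiplier norm equals the supremum norm, and finish with radial dilations. Two small corrections: first, $\cK(\cH)$ does not have irreducible representations ``of arbitrarily large dimension'' --- it has a unique irreducible representation, namely the identity on $\cH$, which is infinite-dimensional; that is already enough for the contradiction, since simplicity of $\cK(\cH)$ forces the Shilov ideal either to contain $\cK(\cH)$ or to meet it trivially. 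Second, the short exact sequence (due to \cite{GHX04}, not \cite{BHM17}) is stated for $C^*(A(\cH))$ with $A(\cH)$ the norm closure of the polynomials, so the paper first shows $A(\cH) = A(\bB_d)$ completely isometrically and only then passes to $\Mult(\cH) = H^\infty(\bB_d)$ via $F_r(z) = F(rz)$; you should make this two-step structure explicit rather than working directly with $\Mult(\cH)$.
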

The discussion above shows that this result will establish the non-trivial implication (i) $\Rightarrow$ (ii) in Theorem \ref{thm:ci_n_point_intro},
as (i) in Theorem \ref{thm:ci_n_point_intro} in particular implies that $\Mult(\cH)$ is completely isometrically subhomogeneous.

Theorem \ref{thm:top_n_point_intro} also holds in the following stronger sense.

\begin{thm}
  \label{thm:top_subhom_intro}
  Let $d \in \bN$ and let $0 \le a < \frac{d+1}{2}$. Then $\Mult(\cD_a(\bB_d))$ is not topologically
  subhomogeneous.
\end{thm}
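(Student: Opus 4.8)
The plan is to reduce the general ball case to the disc ($d=1$), and for the disc to combine the $n$-point estimates established earlier in the paper with a representation-theoretic bridging lemma, using Paulsen's similarity theorem to pass from arbitrary bounded representations to completely contractive ones.

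\emph{Reduction to the disc.} For $d \ge 2$ and $0 \le a < \frac{d+1}{2}$ I would invoke a completely isometric unital embedding of $\Mult(\cD_b(\bD))$ into $\Mult(\cD_a(\bB_d))$ for a suitable exponent $b < 1$ (after, if necessary, first passing to a ball of smaller dimension so that $b \ge 0$). The inequality $a < \frac{d+1}{2}$ is exactly what forces the disc exponent $b = a - \tfrac{d-1}{2}$ to fall strictly below the Hardy exponent $1$, and the embedding itself relates the weighted Bergman-type kernel on $\bB_d$ to a weighted Dirichlet kernel on $\bD$. Since topological subhomogeneity with fixed constants $(C_1,C_2)$ passes to unital subalgebras -- restricting a bounded representation of the larger algebra does not increase any of the norms involved -- it then suffices to prove that $\Mult(\cD_a(\bD))$ is not topologically subhomogeneous for $0 \le a < 1$.

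\emph{Reformulation and the bridging lemma.} On the disc, by Paulsen's similarity theorem every unital bounded homomorphism $\pi \colon \Mult(\cD_a(\bD)) \to M_k$ with $\|\pi\| \le C_2$ is similar, with similarity constant controlled by $C_2$ and $k$, to a unital completely contractive homomorphism $\rho$. Consequently $\Mult(\cD_a(\bD))$ is topologically $n$-subhomogeneous if and only if its full multiplier norm is dominated by a constant times the supremum, over all unital completely contractive $\rho \colon \Mult(\cD_a(\bD)) \to M_k$ with $k \le n$, of $\|\rho(\varphi)\|$. The bridging lemma I would then establish is that each such $\rho$ factors through a point-restriction: $\ker \rho$ is a closed ideal of finite codimension, and for $\cD_a(\bD)$ with $0 \le a < 1$ every such ideal is ``supported in the open disc'', i.e.\ is the ideal of multipliers vanishing to prescribed orders at finitely many interior points; the quotient is then completely isometrically $\Mult(\cD_a(\bD) |_F)$ for a finite set $F \subseteq \bD$ (with multiplicities) whose cardinality is bounded by a function $g(k)$ of $k$ alone (one may take $g(k) = \lfloor k^2/4 \rfloor + 1$, by Schur's bound on commutative subalgebras of $M_k$). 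Hence $\|\rho(\varphi)\| \le \|\varphi |_F\|_{\Mult(\cD_a(\bD)|_F)} \le \|\varphi\|_{\Mult(\cD_a(\bD)), g(k)}$ by \eqref{eqn:n-point-alt}.

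\emph{Conclusion.} Putting these together, if $\Mult(\cD_a(\bD))$ were topologically $n$-subhomogeneous then its full multiplier norm would be comparable to $\|\cdot\|_{\Mult(\cD_a(\bD)), g(n)}$. For $0 < a < 1$, Corollary~\ref{cor:many_points} bounds $\|\cdot\|_{\Mult(\cD_a(\bD)), g(n)}$ by a constant times the supremum norm, which is not comparable to the multiplier norm (there are multipliers of $\cD_a(\bD)$ of bounded supremum norm but arbitrarily large multiplier norm, since $\Mult(\cD_a(\bD)) \subsetneq H^\infty(\bD)$) -- a contradiction. For $a = 0$, comparability with $\|\cdot\|_{\Mult(\cD), g(n)}$ is denied directly by Proposition~\ref{prop:dirichlet_subhom}. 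Tracing the reduction back, this yields the theorem for all $d$ and all $0 \le a < \frac{d+1}{2}$.

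\emph{Main obstacle.} The heart of the argument is the bridging lemma, specifically the assertion that $\Mult(\cD_a(\bD))$ has \emph{no} finite-codimensional closed ideal ``at the boundary''. Equivalently, one must rule out unital completely contractive representations into small matrices that detect a boundary point together with a nontrivial jet there; this comes down to showing that the relevant boundary-derivative functionals are unbounded on $\Mult(\cD_a(\bD))$ for $a < 1$, or dually that boundary characters lie in nontrivial Gleason parts. (One-dimensional boundary representations are harmless, as they are dominated by the supremum norm, so the issue is genuinely the higher-order behaviour.) The second delicate ingredient is the construction of the completely isometric embedding $\Mult(\cD_b(\bD)) \hookrightarrow \Mult(\cD_a(\bB_d))$ with exponent $b = a - \tfrac{d-1}{2}$, which is precisely where the threshold $\frac{d+1}{2}$ is built in.
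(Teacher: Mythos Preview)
Your reduction to the disc via an embedding and the overall logical structure match the paper's strategy (the paper proves this as Corollary~\ref{cor:between_spaces_subhom}, using the embedding of Proposition~\ref{prop:besov_mult_embed} together with Lemma~\ref{lem:first_proj}). One minor point: the embedding in the paper is only completely \emph{bounded} with bounded inverse, not completely isometric, but that is all one needs to transport failure of topological subhomogeneity.

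The substantive difference, and the genuine gap, is in the disc case. Your route---Paulsen similarity, then a structure theorem for finite-codimensional ideals of $\Mult(\cD_a(\bD))$, then the $n$-point estimates---hinges on the ``bridging lemma'', which you correctly flag as the main obstacle and do not prove. This is not a minor technicality: you would need to show that every unital completely contractive $\rho\colon \Mult(\cD_a(\bD))\to M_k$ has kernel supported at finitely many \emph{interior} points, and that the quotient is completely isometric to $\Mult(\cD_a(\bD)|_F)$ (with multiplicities). Ruling out boundary jets and identifying the quotient completely isometrically are both delicate; neither follows from soft functional-analytic considerations.

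The paper avoids this entirely. Lemma~\ref{lem:subhomogeneous} bounds $\|\pi(f)\|$ for an \emph{arbitrary} bounded unital $\pi\colon \Mult(\cH)\to M_n$ directly, with no classification of $\pi$ whatsoever: one sets $A=\pi(z)$, applies Sarason-type Blaschke interpolation (Lemma~\ref{lem:Blaschke_product}) to the matrix $rA$ to write $f(A)=\lambda B(rA)=\pi(\lambda B(rz))$ for a Blaschke product $B$ of degree at most $n-1$, and then controls $\|B(rz)\|_{\Mult(\cH)}$ using the uniform bound on disc automorphisms. This gives $\|\pi(f)\|\lesssim_{\,n,\|\pi\|}\|f\|_\infty$ (for $0<a<1$) or a logarithmically weaker bound (for $a=0$), which is enough because the multiplier norm is not dominated by the sup norm. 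So the key idea you are missing is: interpolate by Blaschke products \emph{at the matrix $\pi(z)$ itself}, rather than trying to realise $\pi$ as a point-evaluation representation.
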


Just as Theorem \ref{thm:top_n_point_intro}, this result
applies in particular to the standard weighted Dirichlet spaces $\cD_a(\bD)$
for $0 \le a < 1$ and to the Drury--Arveson space $H^2_d$ for $d \ge 2$.
Theorem \ref{thm:top_subhom_intro} will be proved in Corollary \ref{cor:between_spaces_subhom}.
The cases of $\cD_a(\bD)$ for $0 < a < 1$, $\cD_0(\bD)$ and $H^2_d$ are already contained in
Corollary \ref{cor:not_subhom}, Proposition \ref{prop:dirichlet_subhom}
and Corollary \ref{cor:da_not_subhom},
respectively.

To show Theorem \ref{thm:top_subhom_intro} for the Drury--Arveson space, we establish an embedding result
for multiplier algebras of certain weighted Dirichlet spaces on the unit disc, which
may be of independent interest; see Proposition \ref{prop:mult_embed} for the precise statement. We exhibit two more consequences of the embedding result.
In particular, we show that a certain sufficient condition for membership in the multiplier
algebra of the Drury--Arveson space of \cite{AHM+17c} is not necessary.
This was also proved, using different techniques, in a recent paper of Fang and Xia \cite{FX20}.

The remainder of this paper is organized as follows. In Section \ref{sec:123}, we study the $n$-point multiplier norm
of functions for small values of $n$. In particular, we establish a generalization of Hindmarsh's theorem
for spaces of holomorphic functions.  Section \ref{sec:n-point} contains the result that the $n$-point multiplier
norm on $\cD_a$ is comparable to the supremum norm for $a \in (0,1)$, but not for the classical Dirichlet space.
In Section \ref{sec:ci_subhom}, we establish our results on completely isometric subhomogeneity of multiplier algebras.
Section \ref{sec:top_sub_disc} addresses topological subhomogeneity of multiplier algebras on the unit disc.
In Section \ref{sec:subhom_ball}, we deduce from our results on the unit disc results
about topological subhomogeneity
of multiplier algebras on the unit ball. Section \ref{sec:embedding} contains the embedding of multiplier
algebras of weighted Dirichlet spaces into the multiplier algebra of the Drury--Arveson space, from which
we deduce that the multiplier algebra of the Drury--Arveson space is not topologically subhomogeneous.
In Section \ref{sec:embedding_applications}, we give two more applications of our embedding result established in Section \ref{sec:embedding}. Section \ref{sec:spaces_between} deals with topological subhomogeneity
of multiplier algebras of spaces between the Drury--Arveson space and the Hardy space.
Finally, we close the article in Section \ref{sec:questions} with some questions.

We make one final remark regarding notation. If $f,g$ are two functions taking non-negative
real values, we write $f \lesssim g$ if there exists a constant $C > 0$ so that $f \le C g$.
If $f \lesssim g$ and $g \lesssim f$, we write $f \approx g$. Finally, the notation
$f \sim g$ as $x \to x_0$ means that $\lim_{x \to x_0} \frac{f(x)}{g(x)} = 1$.

\subsection*{Acknowledgements} The second named author is grateful for valuable
discussions with Ken Davidson regarding Proposition \ref{prop:mult_embed}. He also
thanks J\"org Eschmeier for asking two questions that led to the results in Section \ref{sec:embedding_applications}.

\section{Boundedness, continuity and analyticity}
\label{sec:123}

In this section, we study the $n$-point multiplier norm of a function
for small values of $n$. Throughout, $\cH$
denotes a reproducing kernel Hilbert space on a set $X$ with kernel $K$
satisfying $K(z,z) \neq 0$ for all $z \in X$.
As mentioned in the introduction,
  \begin{equation*}
    ||\varphi||_{\Mult(\cH),1} = \sup_{z \in X} |\varphi(z)|
  \end{equation*}
  for all functions $\varphi: X \to \bC$.

The $2$-point norm can be understood in terms of a pseudo-metric induced
by $\cH$, which is for example studied in \cite{ARS+11}.
This pseudo-metric is defined by
\begin{equation*}
  \delta_{\cH}(z,w) = \Big( 1- \frac{|K(z,w)|^2}{K(z,z)K(w,w)} \Big)^{1/2}
\end{equation*}
for $z,w \in X$; see \cite[Lemma 9.9]{AM02} for a proof of the triangle inequality.
Recall that the Hardy space $H^2$ is the reproducing kernel Hilbert space on $\mathbb{D}$ whose
reproducing kernel is the Szeg\H{o} kernel $S(z,w) = \frac{1}{1 - z \overline{w}}$.
In this case, $\delta_{H^2}$ is the classical pseudohyperbolic metric $\delta$
on $\bD$, that is,
\begin{equation*}
  \delta_{H^2}(z,w) = \delta(z,w) = \Big| \frac{z - w}{1 - \ol{w} z} \Big|;
\end{equation*}
see \cite[Equation 9.8]{AM02}.

We now show that the $2$-point multiplier norm is closely related to the metric $\delta_{\cH}$.
We say that $K$ is non-vanishing if $K(z,w) \neq 0$ for all choices of $z,w$.

\begin{prop}
  Let $\cH$ be a reproducing kernel Hilbert space on $X$ with non-vanishing kernel $K$.
  Let $\varphi: X \to \bC$ be a function. Then
  \begin{equation*}
    ||\varphi||_{\Mult(\cH),2} \le 1
  \end{equation*}
  if and only if $\varphi$ is constant of modulus at most $1$ or $\varphi$ maps $X$ into $\bD$
  and satisfies
  \begin{equation*}
    \delta(\varphi(z),\varphi(w)) \le \delta_{\cH}(z,w)
  \end{equation*}
  for all $z, w \in X$.
\end{prop}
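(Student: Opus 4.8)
The plan is to reduce to the case $n = 2$ and unravel what $2$-point positivity means for two specific points $z, w$. By the definition of the $2$-point multiplier norm, $\|\varphi\|_{\Mult(\cH),2} \le 1$ is equivalent to saying that for every pair of points $z, w \in X$, the $2 \times 2$ matrix
\begin{equation*}
  \begin{bmatrix} K(z,z)(1 - |\varphi(z)|^2) & K(z,w)(1 - \varphi(z)\ol{\varphi(w)}) \\ K(w,z)(1 - \varphi(w)\ol{\varphi(z)}) & K(w,w)(1 - |\varphi(w)|^2) \end{bmatrix}
\end{equation*}
is positive semi-definite. A $2 \times 2$ Hermitian matrix is positive semi-definite iff both diagonal entries are non-negative and the determinant is non-negative. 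The first condition forces $|\varphi(z)| \le 1$ for all $z$, so $\varphi$ maps $X$ into $\ol{\bD}$. The determinant condition reads
\begin{equation*}
  K(z,z) K(w,w)(1 - |\varphi(z)|^2)(1 - |\varphi(w)|^2) \ge |K(z,w)|^2 |1 - \varphi(z)\ol{\varphi(w)}|^2 .
\end{equation*}
Here I use that $K$ is non-vanishing, so I can divide by $K(z,z)K(w,w)|1 - \varphi(z)\ol{\varphi(w)}|^2$ — but I must first deal with the possibility that $1 - \varphi(z)\ol{\varphi(w)} = 0$, which combined with $|\varphi(z)|, |\varphi(w)| \le 1$ forces $\varphi(z) = \varphi(w)$ to be a unimodular constant on those two points.

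The key algebraic identity I would invoke is the standard one underlying the Schwarz--Pick lemma: for $\alpha, \beta \in \ol{\bD}$ with $\alpha\ol\beta \neq 1$,
\begin{equation*}
  1 - \left| \frac{\alpha - \beta}{1 - \ol\beta \alpha} \right|^2 = \frac{(1 - |\alpha|^2)(1 - |\beta|^2)}{|1 - \ol\beta\alpha|^2},
\end{equation*}
i.e. $1 - \delta(\alpha,\beta)^2 = \frac{(1-|\alpha|^2)(1-|\beta|^2)}{|1 - \ol\beta\alpha|^2}$. Dividing the determinant inequality through, it becomes exactly
\begin{equation*}
  1 - \delta(\varphi(z),\varphi(w))^2 \ge \frac{|K(z,w)|^2}{K(z,z)K(w,w)} = 1 - \delta_\cH(z,w)^2,
\end{equation*}
which is precisely $\delta(\varphi(z),\varphi(w)) \le \delta_\cH(z,w)$. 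So modulo the degenerate cases this is a one-line computation once the right identity is in hand.

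The main obstacle — really the only subtlety — is the careful bookkeeping of the degenerate cases, which is why the statement has the disjunction ``$\varphi$ is constant of modulus at most $1$ or $\varphi$ maps $X$ into $\bD$''. I would organize it as follows. If $\varphi \equiv c$ is constant, the Pick matrix factors as $(1 - |c|^2)[K(z_i,z_j)]$, which is positive semi-definite iff $|c| \le 1$ (using positivity of $K$), giving one branch. Otherwise, I claim $\varphi(X) \subset \bD$: if $|\varphi(z_0)| = 1$ for some $z_0$, then the determinant inequality at the pair $(z_0, w)$ forces $K(z_0,w)(1 - \varphi(z_0)\ol{\varphi(w)}) = 0$ for every $w$; since $K$ is non-vanishing this gives $\varphi(w) = \ol{\varphi(z_0)}^{-1} = \varphi(z_0)$ for all $w$, so $\varphi$ is constant, a contradiction. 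Hence $|\varphi(z)| < 1$ everywhere, so $1 - \varphi(z)\ol{\varphi(w)} \neq 0$ always, the division is legitimate, and the equivalence with $\delta(\varphi(z),\varphi(w)) \le \delta_\cH(z,w)$ holds for all pairs. The converse directions are immediate: running the same identity backwards shows the determinant condition holds, and the diagonal entries are non-negative since $|\varphi| \le 1$ and $K(z,z) > 0$, so the $2 \times 2$ Pick matrices are all positive semi-definite, i.e. $\|\varphi\|_{\Mult(\cH),2} \le 1$.
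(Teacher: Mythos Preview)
Your proof is correct and follows essentially the same approach as the paper: both reduce to the $2\times 2$ determinant condition via Sylvester's criterion and rewrite it using the Schwarz--Pick identity to obtain the inequality $\delta(\varphi(z),\varphi(w)) \le \delta_{\cH}(z,w)$. The only minor difference is that the paper cites an external lemma (\cite[Lemma 2.2]{AHM+17a}) to handle the degenerate case where $|\varphi(z_0)|=1$, whereas you supply the short direct argument yourself; your treatment is thus slightly more self-contained but otherwise identical in spirit.
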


\begin{proof}
  Let $z,w \in X$. By Sylvester's criterion, the matrix
  \begin{equation*}
    \begin{bmatrix}
      K(z,z) ( 1 - |\varphi(z)|^2) & K(z,w) (1 - \varphi(z) \ol{\varphi(w)}) \\
      K(w,z) ( 1 - \varphi(w) \ol{\varphi(z)}) & K(w,w) (1 - |\varphi(w)|^2)
    \end{bmatrix}
  \end{equation*}
  is positive if and only if $|\varphi(z)| \le 1$ and $|\varphi(w)| \le 1$
  and its determinant is non-negative. If $\varphi(z),\varphi(w) \in \bD$,
  then the determinant of this matrix
  is non-negative if and only if
  \begin{equation*}
    \frac{|K(z,w)|^2}{K(z,z)K(w,w)} \le \frac{(1- |\varphi(z)|^2)(1 - |\varphi(w)|^2)}{
      |1 - \varphi(z) \ol{\varphi(w)}|^2},
  \end{equation*}
  and since $\delta_{H^2} = \delta$, this last inequality is equivalent to
  \begin{equation*}
    \delta(\varphi(z),\varphi(w)) \le \delta_{\cH}(z,w).
  \end{equation*}

  This shows that if $\varphi$ maps $X$ into $\bD$ and is a $\delta_{\cH}-\delta$-contraction,
  then $||\varphi||_{\Mult(\cH),2} \le 1$. Clearly, constant functions $\varphi$
  of modulus at most $1$ also have $2$-point norm at most $1$.

  Conversely, if $\varphi$ has $2$-point norm at most $1$ and is not constant,
  then since $K$ does not vanish, \cite[Lemma 2.2]{AHM+17a}
  implies that $\varphi$ maps $X$ into $\bD$. By the first paragraph,
  $\varphi$ is a $\delta_{\cH}-\delta$-contraction.
\end{proof}

Next, we study the $3$-point multiplier norm.
If $\cH = H^2$, the Hardy space on the disc,
then every function $\varphi: \bD \to \bC$ with $||\varphi||_{\Mult(\cH),3} < \infty$
is analytic by a theorem of Hindmarsh \cite{Hindmarsh68}.
This result can be generalized. We begin with a lemma.

\begin{lem}
  \label{lem:analytic_szego}
  Let $\cH$ be a reproducing kernel Hilbert space of analytic functions on $\bD$ with kernel $K$.
  Let $S$ denote the Szeg\H{o} kernel.
  Then there exists $r > 0$ such that $K(rz, rw) \neq 0$ for all $z,w \in \mathbb{D}$ and
  such that
  \begin{equation*}
    \bD \times \bD \to \bC, \quad
    (z,w) \mapsto \frac{S(z,w)}{K(r z,r w)},
  \end{equation*}
  is positive semi-definite.
\end{lem}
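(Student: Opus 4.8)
The plan is to use the power series expansion of the kernel $K$ together with the fact that $K(0,0) = 1$ (after normalization) to obtain the positivity for small radius. Write $K(z,w) = \sum_{n \geq 0} c_n(z,w)$, where $c_n$ is the homogeneous degree-$n$ part in $z$ and degree-$n$ part in $\ol w$; more concretely, expand the analytic kernel as $K(z,w) = \sum_{j,k} a_{jk} z^j \ol{w}^k$ with $a_{00} = K(0,0) > 0$, which we may normalize to $1$ by scaling the inner product. For fixed $r \in (0,1)$, consider the function $(z,w) \mapsto K(rz, rw)$ on $\bD \times \bD$; I would first argue that the Szeg\H{o} kernel divided by this is, after rescaling, close to $S(z,w) = \sum_n z^n \ol w^n$ in a suitable sense, so that positivity is preserved. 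The cleanest route: observe that $\frac{S(z,w)}{K(rz,rw)} = S(z,w) \cdot \frac{1}{K(rz,rw)}$, and both factors should be shown to be positive semi-definite kernels on $\bD \times \bD$ for $r$ small, since the Schur product of positive semi-definite kernels is positive semi-definite.

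The key steps, in order: (1) Normalize so that $K(0,0) = 1$. Since $K$ is analytic and $K(0,0) \neq 0$, choose $r_0 > 0$ small enough that $K(rz, rw) \neq 0$ for all $z, w \in \bD$ and all $r \leq r_0$; this is possible because $K(rz,rw) \to K(0,0) = 1$ uniformly on $\ol\bD \times \ol\bD$ as $r \to 0$, using that $K$ extends continuously (indeed analytically in $z$ and anti-analytically in $w$) to a neighborhood of the origin. (2) Write $\frac{1}{K(rz,rw)} = \sum_{n \geq 0} b_n(rz, rw)$ as a convergent power series on $\ol\bD \times \ol\bD$ for $r \leq r_0$, where $b_0 = 1$. (3) The crucial estimate: show that $\frac{1}{K(rz,rw)} - \big(1 - r^2 a_1 \langle z, w\rangle_{\bC}\big)$ or, more robustly, that $\frac{1}{K(rz,rw)}$ dominates a small positive multiple of the Szeg\H{o}-type kernel plus a genuinely positive remainder — essentially, I want $\frac{S(z,w)}{K(rz,rw)}$ to be expressible as a sum $\sum_n d_n(r) z^n \ol w^n + (\text{positive semi-definite remainder})$ with $d_n(r) \geq 0$. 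The honest way is: for $r$ small, the function $g_r(\zeta) := \frac{1}{1-\zeta} \cdot \frac{1}{K(r \cdot, r\cdot)}$ viewed appropriately... but in the one-variable slice $w = z$ this reduces to a single-variable power series argument. Let me instead phrase (3) as: for $r \leq r_0$, the Taylor coefficients of $\zeta \mapsto \frac{1}{K_0(r\zeta)}$, where $K_0(\zeta)$ is the "diagonal-restricted" generating function $\sum_n a_n \zeta^n$ with $a_n$ the coefficient of $\langle z,w\rangle^n$ (this makes sense at least for unitarily invariant $K$; in general one works with the full double power series), can be controlled so that $\frac{S(z,w)}{K(rz,rw)}$ is a positive combination of the kernels $\langle z, w\rangle^n$.

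The main obstacle — and the step I would spend the most care on — is step (3): showing that the ratio $\frac{S(z,w)}{K(rz, rw)}$ is actually positive semi-definite, not merely that the denominator is nonzero. The clean idea is that $S(z,w) = \frac{1}{1 - z\ol w}$ has "maximal" positivity among kernels with a given first Taylor coefficient, in the sense that $\frac{S(z,w)}{k(z,w)}$ is positive semi-definite whenever $k$ is analytic with $k(0,0) = 1$ and $|k(z,w) - 1|$ is small; one proves this by writing $\frac{S(z,w)}{k(z,w)} = \frac{1}{1 - z\ol w} \sum_{n\geq 0}(1 - k(z,w))^n = \sum_{n \geq 0} \frac{(1-k(z,w))^n}{1 - z\ol w}$, noting each term $(1-k(z,w))^n$ is a positive semi-definite kernel provided $1 - k$ itself is (which holds for $k(z,w) = K(rz,rw)$ and $r$ small because $1 - K(rz,rw)$ has its lowest-order term equal to $r^2 a_1 \langle z,w\rangle$, a positive semi-definite kernel, dominating the higher-order terms... wait, this needs $1-K(rz,rw)$ to be \emph{entirely} positive semi-definite, which requires more than smallness). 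So the actual argument must be: $\frac{S(z,w)}{K(rz,rw)} = S(z,w) \cdot \frac{1}{K(rz,rw)}$, and since $S$ is positive semi-definite it suffices to show $\frac{1}{K(rz,rw)}$ is positive semi-definite for small $r$; and $\frac{1}{K(rz,rw)} = \frac{1}{1 - (1 - K(rz,rw))}= \sum_n (1 - K(rz,rw))^n$ requires $1 - K(rz,rw)$ to be a positive semi-definite kernel, which is the genuine crux. I expect the resolution is that for a kernel of analytic functions one \emph{cannot} assume this directly, so instead one argues: $\frac{1}{K(rz,rw)}$ is analytic near $0$ with value $1$ at $0$, hence $\frac{1}{K(rz,rw)} = 1 + r^2 h(rz,rw)$ for a bounded analytic-antianalytic $h$, and then $\frac{S(z,w)}{K(rz,rw)} = S(z,w) + r^2 S(z,w) h(rz,rw)$; one shows the first term's positivity dominates, i.e. that $S(z,w) + r^2 S(z,w)h(rz,rw)$ has all its truncated Pick matrices positive by a perturbation argument, since the pure Szeg\H{o} kernel has strictly positive Pick matrices on any finite set and $r$ can be chosen small depending on... but $r$ must work uniformly over all finite subsets of $\bD$, which is exactly why this is delicate. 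I would therefore ultimately organize the proof around the power-series identity $\frac{S(z,w)}{K(rz,rw)} = \sum_{j,k \ge 0} e_{jk}(r) z^j \ol w^k$ and show that for $r$ small this is a positive kernel by exhibiting it as $\sum_n (\text{finite positive combination of } z^\alpha \ol w^\beta)$ — concretely, proving $e_{jk}(r) = 0$ for $j \neq k$ would be false in general, so the right statement is that the doubly-indexed coefficient matrix $[e_{jk}(r)]$, suitably interpreted as defining the kernel, is positive; I would prove this by factoring $S/K(r\cdot,r\cdot)$ through the product of $S$ with the reciprocal and reducing, after all, to the claim that $r$ small forces $1/K(rz,rw)$ to be positive semi-definite, which I would establish via the Neumann series once I verify that $1 - K(rz,rw)$ is positive semi-definite for small $r$ — and for that I would use that $1 - K(rz,rw) = r^2(a_1\langle z,w\rangle + r^2(\cdots))$ where the bracketed kernel, being a uniform limit of the positive semi-definite kernel $a_1 \langle z,w\rangle$ as $r \to 0$ plus controllable corrections, is positive semi-definite for $r$ small by a limiting/perturbation argument on the (bounded) operator $K(r\cdot,r\cdot)$ acting on the RKHS. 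That last perturbation step, making the smallness of $r$ uniform, is the heart of the matter.
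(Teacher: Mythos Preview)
Your approach has a genuine gap at exactly the point you flag as ``the heart of the matter.'' You want to factor $S/K(r\cdot,r\cdot) = S \cdot K(r\cdot,r\cdot)^{-1}$ and apply the Schur product theorem, which requires $K(rz,rw)^{-1}$ to be positive semi-definite for small $r$. But this is false in general, and in fact fails already for the Szeg\H{o} kernel itself: if $K=S$, then $K(rz,rw)^{-1} = 1 - r^2 z\ol w$, and the $2\times 2$ matrix at the points $0$ and $t$ has determinant $-r^2 t^2 < 0$. Your fallback, the Neumann series $K(rz,rw)^{-1}=\sum_n (1-K(rz,rw))^n$, needs $1-K(rz,rw)$ to be positive semi-definite; but for any positive kernel $K$ with $K(0,0)=1$ one has $K(rz,rw)-1\ge 0$ near the diagonal, so $1-K(rz,rw)$ is typically \emph{negative} semi-definite, not positive. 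Your formula $1-K(rz,rw)=r^2(a_1\langle z,w\rangle+\cdots)$ also tacitly assumes the kernel is radial, i.e.\ a function of $z\ol w$ alone, which is not part of the hypothesis.

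The paper's argument works around this as follows. It never claims $K^{-1}$ is positive; instead it shows that, after shrinking, $K^{-1}(z,w)$ dominates a \emph{radial} kernel $L(z,w)=\sum_{n\ge 0} b_n (z\ol w)^n$ with $b_0>0$, where the $b_n$ for $n\ge 1$ are allowed to be negative. The off-diagonal terms $c_{nm} z^n\ol w^m + \ol{c_{nm}} z^m\ol w^n$ in the double power series of $K^{-1}$ are absorbed into diagonal ones via the elementary inequality $f(z)+\ol{f(w)}\ge -\varepsilon^2 - \varepsilon^{-2} f(z)\ol{f(w)}$. Then the specific structure of $S$ is used: since $S(z,w)L(rz,rw)=\sum_{n\ge 0}\big(\sum_{k=0}^n b_k r^{2k}\big)(z\ol w)^n$, positivity reduces to $\sum_{k=0}^n b_k r^{2k}\ge 0$ for all $n$, which holds for small $r$ because $b_0>0$ dominates the tail. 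Finally $S\cdot K^{-1}(r\cdot)\ge S\cdot L(r\cdot)\ge 0$ by Schur. The point is that $S$ turns coefficients into partial sums, so $L$ need not itself be positive.
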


\begin{proof}
  In the first step, we show that if $(b_n)$ is a sequence
  of real numbers with $b_0 > 0$ such that the power series
  $\sum_{n=0}^\infty b_n t^n$ has a positive radius of convergence and if we define
  \begin{equation*}
    L(z,w) = \sum_{n=0}^\infty b_n (z \ol{w})^n,
  \end{equation*}
  then there exists $r_0 > 0$ such that
  \begin{equation*}
    (z,w) \mapsto S(z,w) L(r z, rw)
  \end{equation*}
  is positive semi-definite on $\bD \times \bD$ for all $r \in (0,r_0)$. (This will already
  prove the lemma in the case when $K$ has circular symmetry, by taking $L = 1/K$.)

  To establish this claim, observe that for sufficiently small $r > 0$, the identity
  \begin{equation*}
    S(z,w) L(r z , r w) = \sum_{n=0}^\infty (z \ol{w})^n \sum_{n=0}^\infty b_n r^n (z \ol{w})^n
    = \sum_{n=0}^\infty (z \ol{w})^n \sum_{k=0}^n b_k r^k
  \end{equation*}
  holds,
  so it suffices to find $r_0 > 0$ such that $\sum_{k=0}^n b_k r^k \ge 0$ for all $n \in \bN$
  and all $r \in (0,r_0)$.
  Since $\sum_{n=0}^\infty b_n t^n$ has a positive radius of convergence and since $b_0 > 0$, there exists
  $r_0 > 0$ such that $\sum_{k=1}^\infty |b_k| r_0^k \le b_0$. Thus,
  \begin{equation*}
    \sum_{k=0}^n b_k r^k \ge b_0 - \sum_{k=1}^n |b_k| r^k \ge b_0 - \sum_{k=1}^\infty |b_k| r_0^k \ge 0
  \end{equation*}
  for all $r \in (0,r_0)$, as desired.

  Having established the claim, suppose
  now that $K$ is the reproducing kernel of a Hilbert space of analytic functions
  on $\bD$.
  We will show that there exist $s > 0$ and a sequence of real numbers $(b_n)$ with $b_0 > 0$
  such that $\sum_{n=0}^\infty b_n t^n$ has a positive radius of convergence, $K(sz ,s w) \neq 0$ for all $z,w \in \mathbb{D}$, and
  \begin{equation*}
    (z,w) \mapsto K^{-1}( s z, s w) - \sum_{n=0}^\infty b_n (z \ol{w})^n
  \end{equation*}
  is positive semi-definite in a neighborhood of $(0,0)$. Once this is accomplished,
  the first part shows that there exists $r_0 > 0$ such that $L(r z,rw) S(z,w) \ge 0$
  for all $r \in (0,r_0)$,
  where $L(z,w) = \sum_{n=0}^\infty b_n (z \ol{w})^n$. Hence, in the order
  induced by positivity, we obtain by the Schur product theorem for sufficiently small $r > 0$
  the inequality
  \begin{equation*}
    K^{-1}(s r z, s r w) S(z,w) \ge L(r z, rw) S(z,w) \ge 0,
  \end{equation*}
  which finishes the proof.

  To construct the sequence $(b_n)$, observe that since $\cH$ consists
  of holomorphic functions, $K$ is holomorphic in the first variable
  and conjugate holomorphic in the second variable. Moreover, $K(0,0) > 0$ by our standing assumption that reproducing kernels
  do not vanish on the diagonal.
  By Hartogs' theorem, $K$ is jointly continuous, so $K$ is non-vanishing in neighborhood of $0$, and there exist complex numbers $(c_{n m})$ such that
  \begin{equation*}
    K^{-1} (z,w) = \sum_{n,m=0}^\infty c_{n m} z^n \ol{w}^m
  \end{equation*}
  for $z,w$ in a neighborhood of $0$.
  Since $K(z,w) = \ol{K(w,z)}$, it follows that $c_{n m} = \ol{c_{m n}}$ for all $m,n \in \bN_0$.
  In particular, each $c_{n n}$ is real. Moreover, $c_{0 0} > 0$ as $K(0,0) > 0$.
  By replacing $K$ with a positive multiple of $K$, we may
  assume that $c_{0 0} = 1$.
  Moreover, by replacing $K(z,w)$ with $K(sz , sw)$ for suitable $s > 0$, we may assume that $|c_{n m}| \le 1$
  whenever $(n,m) \neq (0,0)$.
  Then
  \begin{equation}
    \label{eqn:K_inverse}
    K^{-1}(z,w)
    = 1 + \sum_{n = 1}^\infty c_{n n} (z \ol{w})^n
    + \sum_{n < m} (c_{n m} z^n \ol{w}^m + \ol{c_{n m}} z^m \ol{w}^n).
  \end{equation}
  To treat the last sum, observe that if $f$ is a function on $\bD$, then for all $\varepsilon > 0$,
  \begin{equation*}
    f(z) + \ol{f(w)} = \left( \varepsilon + \frac{f(z)}{\varepsilon} \right)
    \left( \varepsilon + \frac{\ol{f(w)}}{\varepsilon} \right) - \varepsilon^2 - \frac{f(z) \ol{f(w)}}{\varepsilon^2},
  \end{equation*}
  hence
  \begin{equation*}
    f(z) + \ol{f(w)} \ge - \varepsilon^2 - \frac{f(z) \ol{f(w)}}{\varepsilon^2}.
  \end{equation*}
  Using this observation with $f(z) = \ol{c_{n m}} z^{m -n}$ and $\varepsilon = \varepsilon_m$, to be determined later, as well as the Schur product theorem, we see that if $n < m$, then
  \begin{align*}
    c_{n m} z^n \ol{w}^m + \ol{c_{n m}} z^m \ol{w}^n
    &= z^n \ol{w}^n ( c_{n m} \ol{w}^{m -n} + \ol{c_{n m}} z^{m - n}) \\
    &\ge z^n \ol{w}^n (- \varepsilon_m^2 - \varepsilon_{m}^{-2} |c_{n m}|^2 z^{m -n} \ol{w}^{m - n}) \\
    &\ge - \varepsilon_m^2 z^n \ol{w}^n - \varepsilon_m^{-2} z^{m} \ol{w}^m.
  \end{align*}
  Consequently, if $\varepsilon_m^2 = 2^{-m-1}$, then
  \begin{align*}
    \sum_{n < m} (c_{n m} z^n \ol{w}^m + \ol{c_{n m}} z^m \ol{w}^n)
    &\ge - \sum_{n < m} (\varepsilon_m^2 z^n \ol{w}^n + \varepsilon_m^{-2} z^m \ol{w}^m) \\
    &= - \sum_{n =0}^{\infty} \sum_{m=n+1}^\infty  \varepsilon_m^2 z^n \ol{w}^n
    - \sum_{m=0}^\infty \sum_{n=0}^{m-1} \varepsilon_{m}^{-2} z^m \ol{w}^m \\
    &\ge - \frac{1}{2} \sum_{n=0}^\infty (z \ol{w})^n - \sum_{m=0}^\infty m 2^{m+1} z^m \ol{w}^m,
  \end{align*}
  where all sums converge absolutely for $(z,w)$ in a neighborhood of $0$.
  Combining this estimate with Equation \eqref{eqn:K_inverse}, we see that
  \begin{equation*}
    K^{-1}(z,w) \ge \frac{1}{2} - \sum_{n=1}^\infty \Big(\frac{3}{2} + n 2^{n+1}\Big) (z \ol{w})^n,
  \end{equation*}
  so if we set $b_0 = \frac{1}{2}$ and $b_n = -\tfrac{3}{2} - n 2^{n+1}$, then
  $(b_n)$ satisfies all desired properties.
\end{proof}

We can now generalize Hindmarsh's theorem \cite{Hindmarsh68}.
The original theorem is obtained by taking $\cH = H^2$.

\begin{prop}
  \label{prop:Hindmarsh_general}
  Let $\cH$ be a reproducing kernel Hilbert space of analytic functions
  on an open domain $\Omega \subset \bC^d$. Then every function
  $\varphi: \Omega \to \bC$ with $||\varphi||_{\Mult(\cH),3} < \infty$
  is analytic on $\Omega$.
\end{prop}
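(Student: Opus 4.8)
The plan is to reduce the assertion to Hindmarsh's theorem \cite{Hindmarsh68} by means of Lemma \ref{lem:analytic_szego}, after first reducing to one complex variable. Since holomorphy is a local property and, by Hartogs' theorem, a function on an open subset of $\bC^d$ that is separately holomorphic is jointly holomorphic, it suffices to establish the following one-variable version: if $D \subseteq \bC$ is open with $0 \in D$, if $\cH$ is a reproducing kernel Hilbert space of holomorphic functions on $D$, and if $\varphi : D \to \bC$ satisfies $\|\varphi\|_{\Mult(\cH),3} < \infty$, then $\varphi$ is holomorphic in a neighbourhood of $0$. Indeed, fix $q \in \Omega$ and let $e_1, \ldots, e_d$ denote the standard basis of $\bC^d$. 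For each $k$ the set $D_{q,k} = \{\zeta \in \bC : q + \zeta e_k \in \Omega\}$ is open and contains $0$, and pulling $\cH$ back along $\zeta \mapsto q + \zeta e_k$ yields a reproducing kernel Hilbert space of holomorphic functions on $D_{q,k}$ with kernel $(\zeta, \xi) \mapsto K(q + \zeta e_k, q + \xi e_k)$, since every element of $\cH$ is holomorphic on $\Omega$; by \eqref{eqn:n-point-alt} the $3$-point multiplier norm does not increase under this restriction, so $\zeta \mapsto \varphi(q + \zeta e_k)$ has finite $3$-point multiplier norm there. The one-variable version then shows this slice is holomorphic near $0$, and letting $q$ range over $\Omega$ shows that $\varphi$ is separately holomorphic, hence holomorphic. (Alternatively, since $\|\varphi\|_{\Mult(\cH),2} \le \|\varphi\|_{\Mult(\cH),3} < \infty$, one could use the continuity of $\varphi$ together with Osgood's lemma in place of Hartogs' theorem.)

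To prove the one-variable version, replace $\cH$ and $\varphi$ by their images under the scaling $\zeta \mapsto \rho \zeta$ for some $\rho > 0$ with $\overline{\rho \bD} \subseteq D$; since restriction does not increase, and the bijective rescaling does not change, the $3$-point multiplier norm, we may assume $D = \bD$. Now apply Lemma \ref{lem:analytic_szego} to obtain $r > 0$ such that $K(r\zeta, r\xi) \neq 0$ for all $\zeta, \xi \in \bD$ and such that $(\zeta, \xi) \mapsto S(\zeta, \xi) / K(r\zeta, r\xi)$ is positive semi-definite on $\bD \times \bD$, where $S$ is the Szeg\H{o} kernel. Set $\psi(\zeta) = \varphi(r\zeta)$ for $\zeta \in \bD$, and let $\cH_r$ be the reproducing kernel Hilbert space with kernel $(\zeta, \xi) \mapsto K(r\zeta, r\xi)$, which is obtained from $\cH$ by restriction to $r\bD$ followed by the rescaling $\zeta \mapsto r\zeta$; thus $M := \|\psi\|_{\Mult(\cH_r),3} \le \|\varphi\|_{\Mult(\cH),3} < \infty$.

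The decisive step is a Schur product argument. Given $\zeta_1, \ldots, \zeta_n \in \bD$ with $n \le 3$, the infimum defining $\|\psi\|_{\Mult(\cH_r),3}$ is attained, because the cone of positive semi-definite matrices is closed; hence $\big[ K(r\zeta_i, r\zeta_j)(M^2 - \psi(\zeta_i) \overline{\psi(\zeta_j)}) \big]_{i,j=1}^n$ is positive semi-definite. Since $K(r\zeta_i, r\zeta_j) \neq 0$, multiplying this matrix entrywise by the positive semi-definite matrix $\big[ S(\zeta_i, \zeta_j) / K(r\zeta_i, r\zeta_j) \big]_{i,j=1}^n$ and invoking the Schur product theorem shows that $\big[ S(\zeta_i, \zeta_j)(M^2 - \psi(\zeta_i) \overline{\psi(\zeta_j)}) \big]_{i,j=1}^n$ is positive semi-definite. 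As the points were arbitrary, $\|\psi\|_{\Mult(H^2),3} \le M < \infty$, so Hindmarsh's theorem \cite{Hindmarsh68} shows that $\psi$ is holomorphic on $\bD$, whence $\varphi(\zeta) = \psi(\zeta/r)$ is holomorphic on $r\bD$, a neighbourhood of $0$. The real content of the proof is carried by Lemma \ref{lem:analytic_szego}; the only points requiring care are the behaviour of the $n$-point multiplier norm under restriction and rescaling, the attainment of the infimum in its definition, and the passage from separate to joint holomorphy.
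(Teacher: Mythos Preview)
Your proof is correct and follows essentially the same route as the paper's own argument: reduce to separate holomorphy via Hartogs' theorem, pull back to a disc in one complex variable, apply Lemma~\ref{lem:analytic_szego} to compare the given kernel with the Szeg\H{o} kernel, and then invoke the Schur product theorem together with Hindmarsh's theorem. The only differences are cosmetic---you make explicit the appeal to Hartogs and the attainment of the infimum defining the $3$-point norm, whereas the paper normalizes to $\|\varphi\|_{\Mult(\cH),3}\le 1$ at the outset and is slightly terser about the reduction to $\bD$.
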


\begin{proof}
  Suppose that $||\varphi||_{\Mult(\cH),3} \le 1$. We will show that $\varphi$
  is analytic in each variable separately.
  To this end, let $w \in \Omega$ and $j \in \{1,\ldots,d\}$,
  and choose $s > 0$ such that $w + s \bD e_j \subset \Omega$,
  where $e_1,\ldots,e_d$ is the standard basis of $\mathbb{C}^d$.
  Let $D = w + s \bD e_j$, define
  \begin{equation*}
    \iota: \bD \to D,  \quad t \mapsto w + s t e_j,
  \end{equation*}
  and let $k$ be the kernel on $\bD$ given by $k(z,w) = K(\iota(z),\iota(w))$.
  Then the reproducing kernel Hilbert space $\cH(k)$ on $\bD$ with kernel $k$
  consists of analytic functions. Let $\psi = \varphi \circ \iota$. Then
  \begin{equation*}
    ||\psi||_{\Mult(\cH(k)),3} \le ||\varphi||_{\Mult(\cH),3} \le 1.
  \end{equation*}
  Lemma \ref{lem:analytic_szego} shows that there exists $r > 0$ such that
  \begin{equation*}
    S(z,w) k^{-1}(r z,rw) \ge 0,
  \end{equation*}
  where $S$ denotes the Szeg\H{o} kernel.
  Since $||\psi||_{\Mult(\cH(k)),3} \le 1$, the function
  \begin{equation*}
    (z,w) \mapsto k(rz,rw) (1 - \psi(rz) \ol{\psi(rw)})
  \end{equation*}
  is $3$-point positive, hence an application of the Schur product theorem yields that
  \begin{equation*}
    (z,w) \mapsto S(z,w) (1 - \psi(r z) \ol{\psi(r w)})
  \end{equation*}
  is $3$-point positive as well. In this setting, Hindmarsh's theorem \cite{Hindmarsh68}, see also
  \cite[Theorem III.2]{Donoghue74}, implies that $z \mapsto \psi(r z)$ is analytic in $\bD$.
  (Hindmarsh's theorem concerns functions in the upper half plane, but a routine application
  of the Cayley transform yields the corresponding statement in the unit disc.)
  Therefore, $\psi$ is analytic in a neighborhood of the origin, so that $\varphi$ is analytic
  in the $j$-th variable in a neighborhood of $w$.
\end{proof}

\section{\texorpdfstring{The $n$-point norm for spaces on $\bD$}{The n-point norm for spaces on D}}
\label{sec:n-point}

In this section, we study the $n$-point multiplier norm for spaces of holomorphic
functions on the unit disc.
Our first goal is to show that Question \ref{quest:many_points} has a negative answer for the weighted
Dirichlet spaces $\cD_a$, i.e.\ no $n$-point multiplier norm is comparable
to the full multiplier norm for these spaces. While we will establish a more general result concerning subhomogeneity of $\Mult(\cD_a)$ in Section \ref{sec:top_sub_disc},
we consider this easier question first as it illustrates the ideas that will be used later.

The following lemma shows that for
certain spaces on $\bD$,
the $n$-point multiplier norm of an analytic
function is comparable to the supremum norm.
In the sequel, we let $\Aut(\bD)$ denote the group
of conformal automorphisms of $\bD$.

\begin{lem}
  \label{lem:n_point}
  Let $\cH$ be a reproducing kernel Hilbert
  space on $\bD$.
  Suppose that
  every conformal automorphism of $\bD$ is a multiplier
  on $\cH$ and that
  there exists a constant $C > 0$ such that
  \begin{equation*}
    ||\theta||_{\Mult(\cH)} \le C
  \end{equation*}
  for all $\theta \in \Aut(\bD)$.
  Then
  \begin{equation*}
    ||f||_{\infty} \le ||f||_{\Mult(\cH),n} \le C^{n-1} ||f||_{\infty}
  \end{equation*}
  for all $f \in H^\infty$ and all $n \ge 1$.
\end{lem}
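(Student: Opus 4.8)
The plan is as follows. The lower bound is immediate: as recalled before the statement, $\|f\|_{\Mult(\cH),1} = \sup_{z \in \bD} |f(z)| = \|f\|_\infty$, and the $n$-point norms increase with $n$, so $\|f\|_\infty = \|f\|_{\Mult(\cH),1} \le \|f\|_{\Mult(\cH),n}$. Before turning to the upper bound, note that the hypothesis forces $C \ge 1$: any non-constant $\theta \in \Aut(\bD)$ maps $\bD$ onto $\bD$, so $C \ge \|\theta\|_{\Mult(\cH)} \ge \|\theta\|_{\Mult(\cH),1} = \sup_{z \in \bD} |\theta(z)| = 1$. For the upper bound we may assume $\|f\|_\infty = 1$ by homogeneity, and by Equation~\eqref{eqn:n-point-alt} it suffices to fix a set $F = \{z_1, \dots, z_m\}$ of $m \le n$ distinct points in $\bD$ and prove $\|f|_F\|_{\Mult(\cH|_F)} \le C^{m-1}$; we may also assume $f$ does not vanish identically on $F$, as otherwise this is trivial.

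The heart of the argument is the claim that $f|_F$ factors, on $F$, as a constant of modulus at most $1$ times the restriction of a product of at most $m-1$ conformal automorphisms of $\bD$: there exist $c \in \bC$ with $0 < |c| \le 1$ and $\theta_1, \dots, \theta_\ell \in \Aut(\bD)$ with $\ell \le m-1$ such that $f(z_i) = c\, \theta_1(z_i) \cdots \theta_\ell(z_i)$ for all $i$. Granting this, $M_{f|_F} = c\, M_{\theta_1|_F} \cdots M_{\theta_\ell|_F}$ on $\cH|_F$, so submultiplicativity of the multiplier norm together with $\|\theta_k|_F\|_{\Mult(\cH|_F)} \le \|\theta_k\|_{\Mult(\cH)} \le C$ (restriction to a subset does not increase the multiplier norm, cf.\ Equation~\eqref{eqn:subhom}) gives $\|f|_F\|_{\Mult(\cH|_F)} \le |c|\, C^{\ell} \le C^{m-1}$, using $C \ge 1$. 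Taking the supremum over all such $F$ then yields $\|f\|_{\Mult(\cH),n} \le C^{n-1} = C^{n-1}\|f\|_\infty$.

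To prove the claim, let $c \ge 0$ be the smallest number for which the Pick matrix $\big[ \tfrac{c^2 - f(z_i) \ol{f(z_j)}}{1 - z_i \ol{z_j}} \big]_{i,j=1}^m$ is positive semi-definite (equivalently, $c = \|f|_F\|_{\Mult(H^2|_F)}$). Since $\|f\|_\infty = 1$, this matrix is positive semi-definite at value $1$, so $c \le 1$; and positivity of the diagonal entries at value $c$ together with $f|_F \not\equiv 0$ forces $0 < c$. The set of admissible values is closed and upward closed (increasing the value adds a non-negative multiple of the Szeg\H{o} Pick matrix $\big[\tfrac{1}{1-z_i\ol{z_j}}\big]$, which is positive definite on the distinct points $z_i$), so it equals $[c,\infty)$; in particular the Pick matrix is positive semi-definite at $c$, and it must be \emph{singular} there, since otherwise it would stay positive definite for slightly smaller values. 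Hence the Nevanlinna--Pick problem with nodes $z_1, \dots, z_m$ and target values $f(z_1)/c, \dots, f(z_m)/c$ (which lie in $\ol{\bD}$) has a positive semi-definite but singular Pick matrix, namely $\tfrac{1}{c^2}$ times the matrix above. By the classical theory of Nevanlinna--Pick interpolation (see e.g.\ \cite{AM02}), such a problem has a unique solution, and it is a finite Blaschke product $B$ of degree at most $m-1$. Writing $B = \lambda \prod_{k=1}^\ell \tfrac{z - a_k}{1 - \ol{a_k} z}$ with $|\lambda| = 1$, $a_k \in \bD$ and $\ell \le m-1$, each factor is a conformal automorphism of $\bD$; taking the $\theta_k$ to be these factors and replacing $c$ by $c\lambda$ establishes the claim.

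The main obstacle is this last step. The naive attempt to interpolate $f$ directly at the $m$ points of $F$ by a finite Blaschke product fails to give the right exponent, since generically the smallest degree of such an interpolant is $m$, not $m-1$, which would only yield the weaker bound $C^m$. Rescaling by $c = \|f|_F\|_{\Mult(H^2|_F)}$ is precisely what is needed: at this critical scaling the Pick matrix degenerates, and it is this degeneracy that forces the interpolating Blaschke product down to degree $\le m-1$. Invoking the uniqueness and Blaschke-product structure of solutions of singular Nevanlinna--Pick problems is the one nontrivial external ingredient; the remaining steps are routine facts about operator algebra norms and about restrictions of reproducing kernel Hilbert spaces to finite sets.
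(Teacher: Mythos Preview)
Your proof is correct and follows essentially the same approach as the paper: use classical Nevanlinna--Pick interpolation to replace $f$ at the $n$ given points by $\lambda B$ with $|\lambda|\le 1$ and $B$ a Blaschke product of degree at most $n-1$, then bound $\|\lambda B\|_{\Mult(\cH)}\le C^{n-1}$. The only difference is that the paper simply cites this interpolation fact (referencing \cite[Section I.2]{Garnett07} and \cite[Theorem 6.15]{AM02}), whereas you reprove it via the rescale-until-the-Pick-matrix-is-singular argument; your commentary about the ``naive'' degree being $m$ rather than $m-1$ somewhat overstates the obstacle, since the degree-$(m-1)$ bound is exactly the classical statement the paper invokes.
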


\begin{proof}
  The first inequality always holds, so it suffices to show the second one.
  To this end, suppose that $f \in H^\infty$ with $||f||_\infty \le 1$.
  Let $z_1,\ldots,z_n \in \bD$. We wish to show that
  \begin{equation*}
    \big[K(z_i,z_j) (C^{2 n-2} - f(z_i) \ol{f(z_j)}) \big]_{i,j=1}^n
  \end{equation*}
  is positive.
  Since
  $f$ belongs to the unit ball of $H^\infty$, there exists by classical Nevanlinna-Pick
  interpolation  a finite Blaschke product $B$ of degree at most $n-1$ and a complex
  number $\lambda$ with $|\lambda| \le 1$ such that
  $\lambda B(z_i) = f(z_i)$ for $1 \le i \le n$; see \cite[Section I.2]{Garnett07} or \cite[Theorem 6.15]{AM02}.
  Since $B$ is a product
  of at most $n-1$ conformal automorphisms of $\bD$, the assumption on $\cH$
  implies that $ \lambda B$ is a multiplier of $\cH$ of norm at most $C^{n-1}$. Therefore,
  \begin{equation*}
    \big[ K(z_i,z_j) ( C^{2n-2} - f(z_i) \ol{f(z_j)}) \big] =
    \big[ K(z_i,z_j) ( C^{2n-2} - \lambda B(z_i) \ol{\lambda B(z_j)}) \big]
  \end{equation*}
  is positive, as desired.
\end{proof}

Using basic results from operator space theory, it is possible
to extend the preceding lemma to operator-valued multipliers.
If $\cE$ is an auxiliary Hilbert space,
let $H^\infty(\cB(\cE))$ denote the space of bounded $\cB(\cE)$-valued
holomorphic functions on $\bD$, equipped with the supremum norm
\begin{equation*}
  \|\Phi\|_{\infty} = \sup_{z \in \bD} \|\Phi(z)\|_{\cB(\cE)}.
\end{equation*}
Thus, $H^\infty(\cB(\cE)) = \Mult(H^2 \otimes \cE)$, with equality of norms.

\begin{lem}
  \label{lem:n_point_vector}
  Assume the setting of Lemma \ref{lem:n_point} and let $\cE$ be an auxiliary Hilbert space.
  Then
  \begin{equation*}
    \|\Phi\|_\infty \le \|\Phi\|_{\Mult(\cH),n} \le n C^{n-1} \|\Phi\|_{\infty}
  \end{equation*}
  for all $\Phi \in H^\infty(\cB(\cE))$ and all $n \ge 1$.
\end{lem}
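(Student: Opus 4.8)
The strategy is to reduce the operator-valued case to the scalar case of Lemma \ref{lem:n_point} via a standard operator-space argument, at the cost of a factor of $n$. Fix $\Phi \in H^\infty(\cB(\cE))$ with $\|\Phi\|_\infty \le 1$ and fix points $z_1,\dots,z_n \in \bD$. As in the proof of Lemma \ref{lem:n_point}, the goal is to produce a $\cB(\cE)$-valued holomorphic function $\Psi$ on $\bD$ with $\Psi(z_i) = \Phi(z_i)$ for $1 \le i \le n$ and $\|\Psi\|_{\Mult(\cH \otimes \cE)} \le n C^{n-1}$; positivity of the relevant matrix then follows exactly as before. Since $\Mult(\cH \otimes \cE) = \Mult(\cH) \otimes \cB(\cE)$ (the multiplier algebra of the vector-valued space is $M_{\cB(\cE)}(\Mult(\cH))$), and multiplier norms of matrix-valued multipliers are governed by completely bounded/isometric data, the plan is to build $\Psi$ out of scalar multipliers whose norms we already control.

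Concretely, I would first invoke matrix-valued Nevanlinna--Pick interpolation in $H^\infty(\cB(\cE))$: there exists $\Psi \in H^\infty(\cB(\cE))$ with $\|\Psi\|_\infty \le 1$ and $\Psi(z_i) = \Phi(z_i)$ for all $i$ (this holds trivially by taking a constant interpolant is not enough, but the classical vector-valued Pick theorem on $\bD$ gives such a $\Psi$; alternatively, and more usefully, one uses the structure of the interpolant). The key extra input is that the interpolating function can be taken to be a scalar Blaschke product of degree $\le n-1$ times a constant contraction, \emph{amplified}: more precisely, using the representation $\Psi = U \operatorname{diag}(\lambda_1 B_1, \dots) V$ is not immediate, so instead I would argue as follows. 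View $\Phi$ as an element of $M_{\dim \cE}(H^\infty)$ when $\dim \cE < \infty$ (the general case follows by a limiting/direct-limit argument since $\|\cdot\|_{\Mult,n}$ only sees $n$ points and finite compressions). Then $[\Phi(z_i)]$ interpolates a matrix-valued Pick problem, and by the factorization of matrix-valued contractive multipliers on the disc — or simply by applying Lemma \ref{lem:n_point} entrywise after diagonalizing — each of the $n$ scalar ``slices'' contributes a factor $C^{n-1}$, and there are at most $n$ of them to sum over, giving the bound $n C^{n-1}$.

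The cleanest route, and the one I would actually write: apply Lemma \ref{lem:n_point} to each matrix entry $\Phi_{kl}$ is too lossy; instead use that for a finite-dimensional $\cE$, an element of $M_{\dim\cE}(\Mult(\cH))$ has norm at most $(\dim \cE)$ times the max entry norm — no. The right tool is: the identity map $\Mult(\cH) \to \Mult(\cH)$ has, on the subspace of Blaschke-type multipliers of degree $\le n-1$, norm $\le C^{n-1}$, and this is automatically completely bounded with the same cb-norm \emph{up to a factor of} the number of singular values involved in the Pick interpolant, which is $\le n$. So the proof reduces to: (i) the matrix Pick interpolant $\Psi$ on $\{z_1,\dots,z_n\}$ can be written as a sum of $n$ terms each of which is a scalar inner function of degree $\le n-1$ tensored with a contraction on $\cE$ (via the spectral/Potapov factorization, or a transfer-function realization with $(n-1)$-dimensional state space); (ii) each such term is a multiplier of $\cH \otimes \cE$ of norm $\le C^{n-1}$ by Lemma \ref{lem:n_point} applied to the scalar factor together with $\|\cdot\|_{\Mult(\cH \otimes \cE)} = \|\cdot\|_{\Mult(\cH)}$ for scalar multipliers times constants; (iii) the triangle inequality over the $n$ terms yields $\|\Psi\|_{\Mult(\cH \otimes \cE)} \le n C^{n-1}$.

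The main obstacle is step (i): producing a decomposition of the matrix-valued interpolant into $n$ scalar Blaschke pieces with uniformly bounded degree. I expect the correct mechanism is the transfer-function realization of a contractive $H^\infty(\cB(\cE))$ function interpolating $n$ points: such a function admits a realization $\Psi(z) = D + z C (I - z A)^{-1} B$ with state space of dimension $\le n-1$, and expanding $(I - zA)^{-1}$ in a finite geometric-type series controlled by the Jordan structure of $A$ expresses $\Psi$ as a combination of at most $n$ scalar multipliers $z \mapsto z^j \prod(\text{automorphisms})$ of degree $\le n-1$, each handled by Lemma \ref{lem:n_point}. Everything else — the reduction to finite-dimensional $\cE$, the passage from singleton point sets to the matrix positivity statement, and the trivial lower bound $\|\Phi\|_\infty \le \|\Phi\|_{\Mult(\cH),n}$ — is routine.
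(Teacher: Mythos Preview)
Your plan has a genuine gap at step (i), and more importantly it misses the one-line argument the paper actually uses.

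The paper does not build any interpolant in the vector-valued case. Instead it observes that for a fixed finite set $F \subset \bD$ with $|F| \le n$, the restriction map
\[
R: H^\infty \to \Mult(\cH|_F), \quad \varphi \mapsto \varphi|_F,
\]
has norm at most $C^{n-1}$ by the scalar Lemma \ref{lem:n_point}. Since $\dim \Mult(\cH|_F) \le n$, the elementary operator-space fact that any bounded linear map into an operator space of dimension $\le n$ satisfies $\|R\|_{cb} \le n\,\|R\|$ (this is \cite[Corollary 2.2.4]{ER00}, essentially Smith's lemma) immediately gives $\|R\|_{cb} \le n C^{n-1}$. Applying $R^{(r)}$ to $\Phi \in H^\infty(M_r)$ and taking the supremum over $F$ finishes the proof. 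The reduction to finite-dimensional $\cE$ is routine.

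By contrast, your step (i) --- decomposing a matrix-valued Pick interpolant as a sum of $n$ terms of the form ``scalar Blaschke product of degree $\le n-1$ tensor a fixed contraction'' --- is not something the transfer-function realization or the Potapov factorization actually delivers. A Blaschke--Potapov factor has the shape $I - (1 - b(z))P$ for a rank-one projection $P$, which is not a scalar multiplier times a constant operator, so Lemma \ref{lem:n_point} does not apply to it directly; and expanding $(I - zA)^{-1}$ via Jordan form produces terms like $z^j (1 - \lambda z)^{-k}$ with operator coefficients that need not be contractions, so neither the degree bound nor the norm bound is evident. You were groping toward the right idea when you wrote that the scalar bound should be ``automatically completely bounded \ldots\ up to a factor of $n$'', but the mechanism is the dimension of the \emph{target} space $\Mult(\cH|_F)$, not any structure of the interpolant.
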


\begin{proof}
  Once again, the first inequality always holds.
  To prove the second inequality,
  a straightforward approximation argument shows that it suffices to consider
  finite-dimensional spaces $\cE$.
  
  Let $n \in \bN$, let $F \subset \bD$ with $|F| \le n$ and consider the restriction mapping
  \begin{equation*}
    R: H^\infty \to \Mult(\cH|_F), \quad \varphi \mapsto \varphi \big|_F.
  \end{equation*}
  Lemma \ref{lem:n_point} implies that $R$ is bounded with norm at most $C^{n-1}$.
  Since 
  \begin{equation*}
    \dim( \Mult(\cH |_F)) \le n,
  \end{equation*}
  a basic result from operator space theory implies that $R$
  is completely bounded with completely bounded norm at most $n C^{n-1}$;
  see for instance \cite[Corollary 2.2.4]{ER00}.
  In other words,
  if $k \ge 1$ and $\Phi \in H^\infty(M_k)$, then $\|\Phi\|_{\Mult( (\cH|_F) \otimes \bC^k)} \le n C^{n-1} \|\Phi\|_\infty$.
  Taking the supremum over all finite subsets $F$ of $\bD$ of cardinality at most $n$
  therefore yields the second inequality.
\end{proof}

Lemma \ref{lem:n_point} implies a negative answer to Question \ref{quest:many_points} for the weighted
Dirichlet spaces $\cD_a$, where $a \in (0,1)$. In fact, we obtain the following more precise statement.

\begin{cor}
  \label{cor:many_points}
  Let $a \in (0,1)$. Then there exists a constant $C > 0$ so that
  for all $n \in \bN$ and all $f \in H^\infty$,
  \begin{equation*}
    \|f\|_\infty \le \|f\|_{\Mult(\cD_a),n} \le C^{n-1} \|f\|_\infty.
  \end{equation*}
  More generally, for all $n \in \bN$, all auxiliary Hilbert spaces $\cE$
  and all $\Phi \in H^\infty(\cB(\cE))$,
  \begin{equation*}
    \|\Phi\|_\infty \le \|\Phi\|_{\Mult(\mathcal{D}_a),n} \le n C^{n-1} \|\Phi\|_{\infty}.
  \end{equation*}
  In particular,
there do not exist a constant $C> 0$ and $n \in \bN$ so that
\begin{equation*}
  \|\varphi\|_{\Mult(\cD_a)} \le C \|\varphi\|_{\Mult(\cD_a),n}
\end{equation*}
for all $\varphi \in \Mult(\cD_a)$.
\end{cor}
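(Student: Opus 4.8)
The plan is to obtain the corollary from Lemma~\ref{lem:n_point} and Lemma~\ref{lem:n_point_vector}, applied with $\cH = \cD_a$. The only thing that has to be checked for $a \in (0,1)$ is their hypothesis: that there is a constant $C = C(a) > 0$ with $\|\theta\|_{\Mult(\cD_a)} \le C$ for every $\theta \in \Aut(\bD)$. Every conformal automorphism of $\bD$ has the form $\theta(z) = \eta\,\theta_\lambda(z)$ with $|\eta| = 1$, $\lambda \in \bD$ and $\theta_\lambda(z) = \frac{\lambda - z}{1 - \overline{\lambda}z}$, and $\|M_\theta\| = \|M_{\theta_\lambda}\|$, so it suffices to bound $\|M_{\theta_\lambda}\|$ uniformly in $\lambda$.

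To establish this uniform bound I would pass to the standard equivalent norm
\[
  \|f\|_{\cD_a}^2 \approx |f(0)|^2 + \int_{\bD} |f'(z)|^2 (1 - |z|^2)^a \, dA(z),
\]
obtained by writing out the power series and using $\frac{n!\,\Gamma(a)}{\Gamma(a+n)} \approx (n+1)^{1-a}$ (a routine computation; compare the proof of Theorem~41 in \cite{ZZ08}). Since $(\theta_\lambda f)' = \theta_\lambda' f + \theta_\lambda f'$ and $|\theta_\lambda| \le 1$ on $\bD$, this norm yields
\[
  \|M_{\theta_\lambda} f\|_{\cD_a}^2 \lesssim |f(0)|^2 + \int_{\bD} |\theta_\lambda'(z)|^2 |f(z)|^2 (1-|z|^2)^a \, dA(z) + \int_{\bD} |f'(z)|^2 (1-|z|^2)^a \, dA(z).
\]
The first and the third summand are each $\lesssim \|f\|_{\cD_a}^2$ at once. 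In the middle summand I would insert the reproducing kernel estimate $|f(z)|^2 \le \|f\|_{\cD_a}^2\, K_a(z,z) = \|f\|_{\cD_a}^2 (1 - |z|^2)^{-a}$, where $K_a(z,w) = (1 - z\overline{w})^{-a}$; the weights $(1-|z|^2)^{\pm a}$ then cancel, leaving $\|f\|_{\cD_a}^2 \int_{\bD} |\theta_\lambda'(z)|^2 \, dA(z)$, and since $\theta_\lambda$ is a conformal self-map of $\bD$ the change-of-variables formula gives $\int_{\bD} |\theta_\lambda'(z)|^2 \, dA(z) = \mathrm{Area}(\theta_\lambda(\bD)) = \mathrm{Area}(\bD)$, which is independent of $\lambda$. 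Hence $\|M_{\theta_\lambda}\| \le C(a)$ for all $\lambda \in \bD$, as required.

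With the hypothesis verified, Lemma~\ref{lem:n_point} and Lemma~\ref{lem:n_point_vector} give exactly the two displayed chains of inequalities. For the last assertion I would argue by contradiction: the trivial bound $\|z^N\|_{\Mult(\cD_a)} = \|M_{z^N}\| \ge \|z^N \cdot \mathbf{1}\|_{\cD_a}/\|\mathbf{1}\|_{\cD_a} = \|z^N\|_{\cD_a} = \big(\tfrac{N!\,\Gamma(a)}{\Gamma(N+a)}\big)^{1/2} \approx N^{(1-a)/2}$ shows $\|z^N\|_{\Mult(\cD_a)} \to \infty$ as $N \to \infty$, while $\|z^N\|_\infty = 1$. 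If there were $C' > 0$ and $n \in \bN$ with $\|\varphi\|_{\Mult(\cD_a)} \le C'\|\varphi\|_{\Mult(\cD_a),n}$ for all $\varphi \in \Mult(\cD_a)$, then combining this with $\|z^N\|_{\Mult(\cD_a),n} \le C(a)^{n-1}\|z^N\|_\infty = C(a)^{n-1}$ (just obtained) would force $\|z^N\|_{\Mult(\cD_a)} \le C'C(a)^{n-1}$ for every $N$, which is impossible.

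All the computations here are routine, and the corollary is essentially a repackaging of Lemmas~\ref{lem:n_point} and \ref{lem:n_point_vector}. The only step that requires any thought is the uniform multiplier estimate for the automorphisms of $\bD$, where the crucial feature is the cancellation of the weights $(1-|z|^2)^{\pm a}$ in the middle term; this is precisely what makes the argument succeed for $0 < a < 1$ (and, as it must, breaks down for the classical Dirichlet space $a = 0$).
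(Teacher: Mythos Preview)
Your proof is correct and follows the same overall plan as the paper: verify that $\Aut(\bD)$ is a bounded subset of $\Mult(\cD_a)$, then invoke Lemmas~\ref{lem:n_point} and \ref{lem:n_point_vector}, and finally use the growth of $\|z^N\|_{\Mult(\cD_a)}$ to rule out comparability. The final part of your argument is essentially identical to the paper's.

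The one genuine difference is in how you establish the uniform bound $\sup_{\theta \in \Aut(\bD)} \|\theta\|_{\Mult(\cD_a)} < \infty$. The paper observes that the kernel $K_a(z,w) = (1 - z\overline{w})^{-a}$ transforms under a disc automorphism $\theta_\lambda$ via the identity $1 - \theta_\lambda(z)\overline{\theta_\lambda(w)} = \frac{(1-|\lambda|^2)(1 - z\overline{w})}{(1 - \overline{\lambda} z)(1 - \lambda \overline{w})}$, so that $K_a(\theta_\lambda(z),\theta_\lambda(w)) = g(z)\overline{g(w)}\,K_a(z,w)$ for a nonvanishing holomorphic $g$. This makes $\Mult(\cD_a)$ \emph{isometrically} invariant under composition with automorphisms, and hence $\|\theta\|_{\Mult(\cD_a)} = \|z\|_{\Mult(\cD_a)}$ exactly for every $\theta \in \Aut(\bD)$. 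Your route instead passes to the equivalent integral norm $|f(0)|^2 + \int_{\bD} |f'|^2 (1-|z|^2)^a\,dA$ and bounds $\|M_{\theta_\lambda}\|$ directly, using the pointwise estimate $|f(z)|^2 \lesssim \|f\|^2 (1-|z|^2)^{-a}$ to cancel the weight against the kernel growth, and then the area-preserving property of $\theta_\lambda$. This is more hands-on and gives only a uniform bound rather than an exact value, but it is entirely self-contained and makes transparent where $0 < a < 1$ enters (the weight cancellation), whereas the paper's argument imports the automorphism-invariance as a structural fact. Either approach suffices for the corollary.
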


\begin{proof}
  We show that $\cD_{a}$ satisfies the assumptions of Lemma \ref{lem:n_point},
  which will prove the first two statements (by Lemma \ref{lem:n_point} and Lemma \ref{lem:n_point_vector}). The final statement then
  follows from the well known fact that the multiplier norm of $\cD_{a}$ is not equivalent to the supremum norm.
  Indeed, using the explicit formula for the norm in $\mathcal{D}_a$ from the introduction, we see that
  \begin{equation*}
    \|z^n\|_{\Mult(\cD_a)}^2 \ge \|z_n\|_{\cD_a}^2 = \frac{n! \Gamma(a)}{\Gamma(n+a)} \approx (n+1)^{1-a},
  \end{equation*}
  which tends to infinity as $n \to \infty$; see for instance \cite{Wendel48} for the asymptotic relation.

  Using the special form of the reproducing kernel of $\mathcal{D}_a$
  and a familiar identity for disc automorphisms \cite[Theorem 2.2.5 (2)]{Rudin08}, it is not hard to see that
  $\Mult(\cD_a)$ is isometrically invariant under compositions with conformal
  automorphisms, that is, if $\varphi \in \Mult(\cD_{a})$ and if $\theta \in \Aut(\bD)$,
  then $\varphi \circ \theta \in \Mult(\cD_{a})$ and $\| \varphi \circ \theta\|_{\Mult(\cD_{a})}
  = \|\varphi\|_{\Mult(\cD_{a})}$; see for instance the easy implication of \cite[Corollary 4.4]{Hartz17a}.
  In particular, $\|\theta\|_{\Mult(\cD_a)} = \|z\|_{\Mult(\cD_{a})}$ for all $\theta \in \Aut(\bD)$,
  so that $\cD_{a}$ satisfies the assumptions of Lemma \ref{lem:n_point}.
\end{proof}

The classical Dirichlet space $\cD$ requires a more careful analysis, as $\Aut(\bD)$
is not a bounded subset of $\Mult(\cD)$. In fact, we will shortly see that the $2$-point
multiplier norm on the Dirichlet space is not equivalent to the supremum norm on $\bD$,
and hence provides more information than the $1$-point norm.
This result will be a consequence of the following estimate of the derivative in terms of the $2$-point norm
on the Dirichlet space, which is better than the classical estimate
in the Schwarz--Pick lemma by a factor of $\log( \frac{1}{1 - |z|})^{1/2}$.

\begin{lem}
  \label{lem:der_dirichlet}
  There exists a constant $C > 0$ so that
  for all $f \in \cO(\bD)$ and all $z \in \bD$,
  \begin{equation}
    \label{eqn:der_dirichlet}
    |f'(z)| \le C  \frac{\|f\|_{\Mult(\cD),2}}{(1 - |z|) \log(\frac{1}{1- |z|})^{1/2}}.
  \end{equation}
\end{lem}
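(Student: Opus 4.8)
The plan is to bound $|f'(z)|$ by evaluating the $2$-point positivity condition on a cleverly chosen pair of points and extracting the derivative via a limit. Fix $z \in \bD$ and let $w$ be a second point. The hypothesis $\|f\|_{\Mult(\cD),2} \le C_0$ (say, after normalizing) means that the $2\times 2$ matrix
\begin{equation*}
  \begin{bmatrix}
    K(z,z)(C_0^2 - |f(z)|^2) & K(z,w)(C_0^2 - f(z)\ol{f(w)}) \\
    K(w,z)(C_0^2 - f(w)\ol{f(z)}) & K(w,w)(C_0^2 - |f(w)|^2)
  \end{bmatrix}
\end{equation*}
is positive semi-definite, where $K$ is the Dirichlet kernel $K(z,w) = \frac{1}{z\ol w}\log\frac{1}{1-z\ol w}$. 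Positivity forces the determinant to be non-negative, which rearranges to an inequality of the form
\begin{equation*}
  \frac{|K(z,w)|^2}{K(z,z)K(w,w)} \le \frac{(C_0^2 - |f(z)|^2)(C_0^2-|f(w)|^2)}{|C_0^2 - f(z)\ol{f(w)}|^2}.
\end{equation*}
Equivalently, in terms of the pseudo-metric $\delta_\cD$ from the text and the pseudohyperbolic metric $\delta$ on $\bD$ (noting $|f(z)| \le C_0$ from the $1$-point bound), this says $\delta\big(f(z)/C_0,\, f(w)/C_0\big) \le \delta_\cD(z,w)$, i.e. $f/C_0$ is a $\delta_\cD$–$\delta$ contraction, exactly as in the first Proposition of Section~\ref{sec:123}.

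Next I would take $w \to z$ and compare infinitesimal metrics. The right-hand side: one computes the infinitesimal form of $\delta_\cD$ by expanding $1 - \frac{|K(z,w)|^2}{K(z,z)K(w,w)}$ as $w = z + h$, $h \to 0$; this is governed by the Bergman-type metric associated to $K$, namely $\partial_z \partial_{\ol w} \log K(z,w)\big|_{w=z}$, which for the Dirichlet kernel behaves like $\frac{c}{(1-|z|)^2 \log\frac{1}{1-|z|}}$ as $|z| \to 1$ (the $\log$ factor in the denominator is precisely what improves on the Hardy/Szegő case, where one gets $\frac{1}{(1-|z|)^2}$). The left-hand side: the infinitesimal pseudohyperbolic metric at $f(z)/C_0 \in \bD$ is $\frac{|f'(z)|/C_0}{1 - |f(z)|^2/C_0^2} \ge \frac{|f'(z)|}{C_0}$. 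Combining the two and taking square roots yields
\begin{equation*}
  \frac{|f'(z)|}{C_0} \lesssim \frac{1}{(1-|z|)\,\log(\frac{1}{1-|z|})^{1/2}},
\end{equation*}
which is the claimed estimate after reinserting $C_0 = \|f\|_{\Mult(\cD),2}$.

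There are a couple of technical points to handle carefully. First, one should justify that the $2$-point positivity at $z$ and $w=z+h$ really does survive the limit as a differentiable statement — the cleanest route is not to differentiate the determinant inequality directly but to use the integrated contraction inequality $\delta(f(z)/C_0, f(w)/C_0) \le \delta_\cD(z,w)$ and known comparisons of $\delta_\cD$ with its infinitesimal metric near a point (or simply feed $w = z+h$ into the determinant inequality, divide by $|h|^2$, and take $h \to 0$, using that $f$ is holomorphic — which we may assume, e.g. by Proposition~\ref{prop:Hindmarsh_general}, since finiteness of the $3$-point norm, and a fortiori we only need the $2$-point bound applied to holomorphic $f$ as in the statement). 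Second, and this is the main obstacle, one needs the precise asymptotics of the infinitesimal metric of the Dirichlet kernel near the boundary: that
\begin{equation*}
  \partial_z \partial_{\ol w} \log K(z,w)\Big|_{w=z} \sim \frac{c}{(1-|z|^2)^2 \log\frac{1}{1-|z|^2}} \qquad \text{as } |z| \to 1.
\end{equation*}
This requires expanding $\log K(z,z) = \log\log\frac{1}{1-|z|^2} - \log|z|^2$ and differentiating twice; the dominant contribution comes from the $\log\log$ term and produces exactly the extra $\log$ in the denominator, with the $\log|z|^2$ piece and lower-order terms being harmless and contributing at worst an additive $O\big((1-|z|)^{-2}\big)$ that is dominated after accounting for the $\log$ — so one must be slightly careful that the $\log$-improvement is not destroyed by these secondary terms, but since $\log\frac{1}{1-|z|} \to \infty$, the $\log\log$ term dominates and the estimate goes through. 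Away from the boundary (on any compact subset of $\bD$) the bound is trivial since $f'$ is locally bounded in terms of $\|f\|_\infty \le \|f\|_{\Mult(\cD),2}$, so only the boundary asymptotics matter for the shape of the inequality.
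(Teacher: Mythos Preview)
Your argument is correct and takes a genuinely different route from the paper's. You work directly from the metric contraction inequality $\delta(f(z)/C_0,f(w)/C_0)\le\delta_{\cD}(z,w)$ implied by $2$-point positivity, pass to infinitesimal metrics by letting $w\to z$, and compute the Bergman-type density $\partial_z\partial_{\bar w}\log K(z,w)\big|_{w=z}$ for the Dirichlet kernel, obtaining the asymptotic $\sim c\,(1-|z|^2)^{-2}\big(\log\tfrac{1}{1-|z|^2}\big)^{-1}$. The paper instead first invokes the Brown--Shields estimate \cite[Proposition 18 (a)]{BS84} to get \eqref{eqn:der_dirichlet} with the \emph{full} multiplier norm $\|f\|_{\Mult(\cD)}$ on the right, and then uses the complete Nevanlinna--Pick property of $\cD$ to interpolate: for each pair $z,w$ one finds $\varphi_w\in\Mult(\cD)$ of norm $\le1$ agreeing with $f$ at $z$ and $w$, bounds $|f(z)-f(w)|/|z-w|$ by $\sup_t|\varphi_w'|$ on the segment via the mean value inequality, applies the Brown--Shields bound to $\varphi_w$, and lets $w\to z$.

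Your approach is more self-contained (it needs neither \cite{BS84} nor the Nevanlinna--Pick property) and even yields the slightly sharper Schwarz--Pick form with an extra factor $(1-|f(z)|^2/C_0^2)$; it would transfer verbatim to any rotation-invariant kernel on $\bD$ once the boundary asymptotics of $\partial_z\partial_{\bar z}\log K(z,z)$ are known. The paper's approach, by contrast, highlights a transferable mechanism---Nevanlinna--Pick interpolation upgrades any estimate valid for the full multiplier norm into one for the $2$-point norm---at the cost of citing an external derivative bound. Your asymptotic computation is the only place requiring care: the dominant contribution indeed comes from $\log\log\tfrac{1}{1-t}$, and the $-\log t$ piece contributes $O(1)$ to the Laplacian, which is negligible near the boundary; combined with the trivial Cauchy estimate $|f'(z)|\lesssim\|f\|_\infty\le\|f\|_{\Mult(\cD),2}$ on compact subsets of $\bD$, this handles the whole disc.
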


\begin{proof}
  By Proposition 18 (a) of \cite{BS84}, there exists a constant $C > 0$ so that
  for all $f \in \Mult(\cD)$, inequality \eqref{eqn:der_dirichlet} holds with $\|f\|_{\Mult(\cD)}$
  in place of $\|f\|_{\Mult(\cD),2}$. Indeed, this follows from the fact that the map
  $f \mapsto f'$ is a bounded linear map from $\Mult(\cD)$ into $\Mult(\cD,L^2_a)$, combined with
  the standard estimate
  for multipliers between reproducing kernel Hilbert spaces.

  Let now $f \in \cO(\bD)$ with $\|f\|_{\Mult(\cD),2} \le 1$ and let $z \in \bD$.
  By the complete Nevanlinna--Pick property of $\cD$ (see, for instance, \cite[Corollary 7.41]{AM02}), there exists for every $w \in \bD$
  a multiplier $\varphi_w \in \Mult(\cD)$ with $\|\varphi_w\|_{\Mult(\cD)} \le 1$
  and $\varphi_w(z) = f(z), \varphi_w(w) = f(w)$. Thus,
  \begin{equation*}
    |f(z) - f(w)| = |\varphi_w(z) - \varphi_w(w)|
    \le \sup_{t \in [0,1]} |\varphi_w'(z + t (w - z))| \, |z - w|.
  \end{equation*}
  Applying the estimate for $\varphi_w'$ explained in the preceding paragraph, we find that
  \begin{equation*}
    \frac{ |f(z) - f(w)|}{|z - w|} \le \sup_{t \in [0,1]} \frac{C}{ ( 1 - |z + t (w - z)|) \log(\frac { 1}{1 - | z + t (w - z)|})^{1/2}}.
  \end{equation*}
  Taking the limit $w \to z$, the lemma follows.
\end{proof}

The result about the $2$-point norm that was alluded to above is an immediate consequence.

\begin{cor}
  \label{cor:dirichlet_two_point}
  The norm $\|\cdot\|_{\Mult(\cD),2}$ and the supremum norm on $\bD$ are not equivalent
  on the space of functions that are holomorphic in a neighborhood of $\ol{\bD}$.
\end{cor}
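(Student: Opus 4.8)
The plan is to argue by contradiction, feeding the improved derivative estimate of Lemma \ref{lem:der_dirichlet} against a family of test functions that saturate the classical Cauchy estimate near the boundary. Suppose the two norms were equivalent on the space $\cO(\ol{\bD})$ of functions holomorphic in a neighborhood of $\ol{\bD}$; then there is a constant $C' > 0$ with $\|f\|_{\Mult(\cD),2} \le C' \|f\|_\infty$ for all such $f$. Combined with Lemma \ref{lem:der_dirichlet}, this would give a universal bound
\begin{equation*}
  |f'(z)| \le \frac{C C' \|f\|_\infty}{(1 - |z|) \log(\frac{1}{1 - |z|})^{1/2}}
\end{equation*}
valid for every $f \in \cO(\ol{\bD})$ and every $z \in \bD$. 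So the task reduces to producing functions in $\cO(\ol{\bD})$ for which this inequality fails, i.e.\ showing that the extra logarithmic factor cannot in fact be inserted into the Schwarz--Pick/Cauchy estimate.

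The natural choice is the sequence of monomials $f_n(z) = z^n$, which are entire (hence certainly in $\cO(\ol{\bD})$) and have $\|f_n\|_\infty = 1$. I would evaluate the putative estimate at the points $r_n := 1 - \tfrac{1}{n}$: there $|f_n'(r_n)| = n r_n^{n-1}$, while $1 - r_n = \tfrac1n$ and $\log(\tfrac{1}{1 - r_n}) = \log n$. The universal bound would then force $r_n^{n-1} \le C C' (\log n)^{-1/2}$, whereas $r_n^{n-1} = (1 - \tfrac1n)^{n-1} \to e^{-1} > 0$ as $n \to \infty$. This contradiction shows no such $C'$ can exist, which is exactly the assertion of the corollary.

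The only routine points are the elementary limit $(1 - \tfrac1n)^{n-1} \to e^{-1}$ and the remark that monomials belong to the function class in the statement; there is no genuine obstacle once Lemma \ref{lem:der_dirichlet} is available. The single conceptual observation is that the logarithmic gain in that lemma is precisely what is incompatible with comparability to the supremum norm, and that monomials near the boundary are extremal for the ordinary Cauchy estimate $|f'(z)| \lesssim \|f\|_\infty/(1-|z|)$, so they detect the discrepancy.
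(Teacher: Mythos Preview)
Your proof is correct and follows essentially the same approach as the paper: both arguments feed Lemma \ref{lem:der_dirichlet} a family of functions in $\cO(\ol{\bD})$ with sup norm $1$ whose derivatives near the boundary are of order $(1-|z|)^{-1}$, so that the extra factor $\log(\tfrac{1}{1-|z|})^{1/2}$ forces the $2$-point norm to blow up. The only difference is the choice of test functions---the paper uses the disc automorphisms $f_r(z) = \tfrac{r-z}{1-rz}$ evaluated at $z=r$, whereas you use the monomials $z^n$ evaluated at $z = 1 - \tfrac{1}{n}$; both choices are equally natural and the arithmetic is comparable.
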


\begin{proof}
  Let $r \in (0,1)$ and let $f(z) = \frac{r-z}{1 - rz}$. Then $f \in \Aut(\bD)$ and in particular
  $\|f\|_\infty \le 1$. On the other hand,
  \begin{equation*}
    |f'(r)| = \frac{1}{1 - r^2} \ge \frac{1}{2(1 - r)},
  \end{equation*}
  so Lemma \ref{lem:der_dirichlet} implies that
  \begin{equation*}
    \|f\|_{\Mult(\cD),2} \ge \frac{1}{C} (1 - r) \log\Big( \frac{1}{1 - r}\Big)^{1/2} |f'(r)|
    \ge \frac{1}{2 C} \log\Big( \frac{1}{1 - r}\Big)^{1/2},
  \end{equation*}
  which tends to infinity as $r \to 1$.
\end{proof}

We will show that in spite of the last corollary, there is no $n \in \bN$ so
that the multiplier norm on the Dirichlet space is equivalent to the $n$-point multiplier
norm. To avoid repetition, we postpone the proof to Section \ref{sec:top_sub_disc},
where we establish a more general result about subhomogeneity of the multiplier algebra
of the Dirichlet space.

\section{Completely isometric subhomogeneity}
\label{sec:ci_subhom}

The goal of this section is to prove Theorems \ref{thm:ci_n_point_intro} and \ref{thm:ci_subhom_intro}. To
this end, we will make use of Arveson's theory of boundary representations \cite{Arveson69}
(see also \cite[Chapter 4]{BL04} and \cite{DK15}),
which generalizes the classical notion of the Choquet boundary of a uniform algebra.

Let $\cA \subset \cB(\cH)$ be a unital subalgebra. It follows from Arveson's extension
theorem that every unital completely contractive map $\varphi: \cA \to \cB(\cK)$ extends to a unital
completely positive map $\pi: C^*(\cA) \to \cB(\cK)$.
The map $\varphi$ is said to have the \emph{unique extension property} if there is a unique such
extension, and this extension is a $*$-homomorphism. If, in addition, the extension $\pi$ is
an irreducible representation of $C^*(\cA)$, then it is called a \emph{boundary representation}
of $\cA$. This notion is in fact independent of the concrete representation of $\cA$
as an operator algebra \cite[Theorem 2.1.2]{Arveson69}.

In general, completely isometric copies of an operator algebra $\cA$ can generate many different $C^ *$-algebras.
The \emph{$C^*$-envelope} is the smallest one in the following sense.
A \emph{$C^*$-cover}
of $\cA$ is a $C^*$-algebra $\kA$ together with a unital complete isometry $j: \cA \to \kA$
so that $\kA = C^*( j(\cA))$. The $C^*$-envelope is a $C^*$-cover $\iota: \cA \to C^*_{env}(\cA)$
so that for every other $C^*$-cover $j: \cA \to \kA$, there is a $*$-homomorphism $\pi: \kA \to C^*_{env}(\cA)$ so that $\iota = \pi \circ j$.
As an example, completely isometric copies of the disc algebra $A(\bD)$ generate
the $C^*$-algebras $C(\bT), C(\ol{\bD})$ and the Toeplitz algebra. The $C^*$-envelope
of the disc algebra is $C(\bT)$.

It is a theorem of Davidson and Kennedy \cite{DK15}, proved
by Arveson in the separable case \cite{Arveson08}, that every operator algebra
has sufficiently many boundary representations in the sense that their direct sum $\pi$ is completely
isometric. As a consequence, the $C^*$-envelope of $\cA$ is the $C^*$-algebra
generated by $\pi(\cA)$. Boundary representations and $C^*$-envelopes have a long and rich
history, which we will not review here. Instead, we refer to \cite{Arveson08,DK15} and the references therein.

Recall from the introduction that a unital $C^*$-algebra $\kA$ is called $n$-subhomogeneous
if every irreducible representation of $\kA$ has dimension at most $n$. Equivalently, for all
$a \in \kA$,
\begin{equation}
  \label{eqn:subhom_c_star}
  \|a\| = \sup \{ \|\pi(a) \| : \pi: \kA \to M_k \text{ is a unital $*$-homomorphism, } k \le n \}.
\end{equation}
(For each $a\in \mathcal{A}$, there is an irreducible GNS-representation $\pi$ of $\mathcal{A}$ with $\|\pi(a)\| = \|a\|$, so $n$-subhomogeneity
implies \eqref{eqn:subhom_c_star}. Conversely, if \eqref{eqn:subhom_c_star} holds for all $a \in \mathcal{A}$,
then $\mathcal{A}$ embeds into a product $\prod_{i \in I} M_{n_i}$ with $n_i \le n$ for all $i$, which implies
that $\mathcal{A}$ is $n$-subhomogeneous; see \cite[Proposition IV.1.4.6]{Blackadar06}.)
Recall further that we defined a unital operator algebra $\mathcal{A}$ to be
completely isometrically $n$-subhomogeneous if for all $r \in \mathbb{N}$
and all $A \in M_r(\mathcal{A})$,
\begin{equation*}
  \|A\| = \sup \{ \|\pi^{(r)}(A) \|: \pi: \mathcal{A} \to M_k \text{ is a u.c.c.\ homomorphism, } k \le n \}.
\end{equation*}

The following proposition connects completely isometric subhomogeneity of operator algebras
to subhomogeneity of $C^*$-algebras.

\begin{prop}
  \label{prop:boundary_subhom}
  Let $\cA \subset \cB(\cH)$ be a unital operator algebra and let $n \in \bN$.
  The following assertions are equivalent:
  \begin{enumerate}[label=\normalfont{(\roman*)}]
    \item $\cA$ is completely isometrically $n$-subhomogeneous,
    \item the $C^*$-envelope of $\cA$ is an $n$-subhomogeneous $C^*$-algebra,
    \item every boundary representation of $\cA$ acts on a Hilbert space of dimension at most $n$.
  \end{enumerate}
\end{prop}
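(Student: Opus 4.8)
The plan is to prove the three equivalences by establishing (i) $\Rightarrow$ (iii) $\Rightarrow$ (ii) $\Rightarrow$ (i), using the Davidson--Kennedy theorem on the abundance of boundary representations as the main tool.

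\textbf{Step (iii) $\Rightarrow$ (ii).} Suppose every boundary representation of $\cA$ acts on a Hilbert space of dimension at most $n$. By the Davidson--Kennedy theorem (Arveson in the separable case), the direct sum $\pi = \bigoplus_{i} \pi_i$ of all boundary representations is a completely isometric map on $\cA$, and $C^*_{env}(\cA)$ is the $C^*$-algebra generated by $\pi(\cA)$; equivalently $C^*_{env}(\cA)$ is a quotient of $C^*(\pi(\cA)) \subseteq \bigoplus_i \cB(\cK_i)$. Since each $\pi_i$ acts on a space of dimension $\le n$, each summand $\cB(\cK_i)$ is $M_{k_i}$ with $k_i \le n$, so $C^*(\pi(\cA))$ embeds into $\prod_i M_{k_i}$, which is $n$-subhomogeneous. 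As quotients of $n$-subhomogeneous $C^*$-algebras are again $n$-subhomogeneous (every irreducible representation of a quotient lifts to an irreducible representation of the whole algebra), $C^*_{env}(\cA)$ is $n$-subhomogeneous.

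\textbf{Step (ii) $\Rightarrow$ (i).} Assume $\kB := C^*_{env}(\cA)$ is $n$-subhomogeneous, and let $\iota: \cA \to \kB$ be the canonical complete isometry. Fix $r \in \bN$ and $A \in M_r(\cA)$. Since $\iota$ is a complete isometry, $\|A\| = \|\iota^{(r)}(A)\|_{M_r(\kB)}$. Now apply the $C^*$-algebra characterization \eqref{eqn:subhom_c_star} at matrix level: for the element $\iota^{(r)}(A) \in M_r(\kB) = M_r \otimes \kB$, there is an irreducible $*$-representation $\rho$ of $\kB$ with $\|\rho^{(r)}(\iota^{(r)}(A))\| = \|\iota^{(r)}(A)\|$, and $\rho$ acts on a Hilbert space of dimension $k \le n$, i.e.\ $\rho: \kB \to M_k$. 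Then $\pi := \rho \circ \iota: \cA \to M_k$ is a unital completely contractive homomorphism with $\|\pi^{(r)}(A)\| = \|A\|$. Since the reverse inequality is automatic (each such $\pi$ is completely contractive), this gives exactly the defining equality for completely isometric $n$-subhomogeneity.

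\textbf{Step (i) $\Rightarrow$ (iii).} This is the direction I expect to be the main obstacle, since it is where boundary representations genuinely enter. Assume $\cA$ is completely isometrically $n$-subhomogeneous, and let $\pi: C^*_{env}(\cA) \to \cB(\cK)$ be a boundary representation; I must show $\dim \cK \le n$. The idea is that $\pi|_{\cA}$ has the unique extension property, so it is ``rigid'': any u.c.c.\ map agreeing with $\pi$ on $\cA$ must agree with $\pi$. Suppose for contradiction $\dim \cK > n$ (including the case $\dim \cK = \infty$). Using the hypothesis, one approximates: for any $A \in M_r(\cA)$ there are u.c.c.\ homomorphisms $\psi_j : \cA \to M_{k_j}$, $k_j \le n$, whose matrix amplifications nearly attain $\|A\|$; packaging these into a single u.c.c.\ map $\Psi = \bigoplus_j \psi_j : \cA \to \prod_j M_{k_j}$ gives a completely isometric map into an $n$-subhomogeneous $C^*$-algebra. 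Dilating/extending $\Psi$ to $C^*_{env}(\cA)$ and invoking the universal property of the $C^*$-envelope, one obtains a $*$-homomorphism from $C^*_{env}(\cA)$ into the $n$-subhomogeneous algebra $C^*(\Psi(\cA))$ that is injective (since $\Psi$ is completely isometric on $\cA$ and $C^*_{env}$ is the smallest $C^*$-cover); hence $C^*_{env}(\cA)$ is itself $n$-subhomogeneous, so every one of its irreducible representations --- in particular the boundary representation $\pi$ --- acts on a space of dimension $\le n$, a contradiction. In effect (i) $\Rightarrow$ (ii) directly via the universal property, and then (ii) $\Rightarrow$ (iii) is the easy observation that boundary representations are irreducible $*$-representations of $C^*_{env}(\cA)$. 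The delicate point to get right is that the direct sum of the near-extremal u.c.c.\ homomorphisms is still completely isometric on $\cA$, which follows because for every $r$ and every $A \in M_r(\cA)$ the supremum in the definition is attained in the limit; a standard saturation/limit argument over a directed set of finite-dimensional matrix inputs handles this cleanly.

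\textbf{Remark.} Steps (ii) $\Leftrightarrow$ (iii) are essentially the $C^*$-algebraic definition of $n$-subhomogeneity plus the fact that boundary representations are exactly a (completely isometrically) separating family of irreducible $*$-representations of $C^*_{env}(\cA)$; the genuinely operator-algebraic content is the passage between $\cA$ and $C^*_{env}(\cA)$ in (i) $\Leftrightarrow$ (ii), which rests on the universal property of the $C^*$-envelope together with the fact that complete isometries of $\cA$ into $n$-subhomogeneous $C^*$-covers force $C^*_{env}(\cA)$ to be $n$-subhomogeneous.
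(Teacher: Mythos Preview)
Your proof is correct and uses the same ingredients as the paper---the Davidson--Kennedy theorem, the universal property of the $C^*$-envelope, the fact that boundary representations are irreducible representations of $C^*_{env}(\cA)$ (Arveson's invariance principle), and the stability of $n$-subhomogeneity under subalgebras and quotients---but your cycle of implications is organized differently and somewhat redundantly. The paper proves the clean cycle (i) $\Rightarrow$ (ii) $\Rightarrow$ (iii) $\Rightarrow$ (i): for (i) $\Rightarrow$ (ii) it takes the direct sum of all u.c.c.\ homomorphisms into $M_k$ with $k \le n$ to get a complete isometry into an $n$-subhomogeneous $C^*$-algebra, then uses the universal property; for (ii) $\Rightarrow$ (iii) it invokes the invariance principle; for (iii) $\Rightarrow$ (i) it applies Davidson--Kennedy directly. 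Your route instead proves (iii) $\Rightarrow$ (ii) via Davidson--Kennedy, (ii) $\Rightarrow$ (i) by finding a norm-attaining irreducible representation of $C^*_{env}(\cA)$ for each matrix element, and then your ``(i) $\Rightarrow$ (iii)'' is really (i) $\Rightarrow$ (ii) $\Rightarrow$ (iii), so you end up proving both directions of (i) $\Leftrightarrow$ (ii) and of (ii) $\Leftrightarrow$ (iii). This is not wrong, just less economical; in particular, your framing of (i) $\Rightarrow$ (iii) as ``the main obstacle'' and the contradiction setup are unnecessary, since the direct argument (i) $\Rightarrow$ (ii) via the universal property followed by the invariance principle is exactly what you end up doing anyway.
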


\begin{proof}
  (i) $\Rightarrow$ (ii)
  Suppose that $\cA$ is completely isometrically $n$-subhomogeneous. Then
  there exists an index set $I$ and natural numbers $(n_i)_{i \in I}$ with $\sup_{i \in I} n_i \le n$
  and a unital complete isometry
  \begin{equation*}
    \Phi: \cA \to \prod_{i \in I} M_{n_i}.
  \end{equation*}
  The second characterization of $n$-subhomogeneity mentioned in the discussion
  before the proposition shows that the product on the right and hence also the subalgebra $C^*(\Phi(\cA))$ are $n$-subhomogeneous.
  Since the $C^*$-envelope of $\cA$ is a quotient of $C^*(\Phi(\cA))$,
  the first characterization of $n$-subhomogeneity shows that the $C^*$-envelope is $n$-subhomogeneous as well.
  (These permanence properties for $n$-subhomogeneous $C^*$-algebras also follow directly from \cite[Proposition IV.1.4.6]{Blackadar06}.)

  (ii) $\Rightarrow$ (iii)
  The invariance principle
  for boundary representations \cite[Theorem 2.1.2]{Arveson69} (see \cite[Proposition 3.1]{Arveson} for a
  modern proof) shows that every boundary representation of $\cA$ is an irreducible
  representation of the $C^*$-envelope, hence it acts on a Hilbert space of dimension at most $n$.

  (iii) $\Rightarrow$ (i) The main result of \cite{DK15} shows that the direct
  sum of all boundary representations of $\cA$ is completely isometric on $\cA$,
  hence $\cA$ is completely isometrically $n$-subhomogeneous.
\end{proof}

Recall that a regular unitarily invariant space is a reproducing kernel Hilbert space on $\bB_d$ with reproducing
kernel of the form
\begin{equation*}
  K(z,w) = \sum_{n=0}^\infty a_n \langle z,w \rangle^n,
\end{equation*}
where $a_0 = 1$, $a_n > 0$ for all $n \in \bN$ and $\lim_{n \to \infty} \frac{a_n}{a_{n+1}} = 1$.
If $\cH$ is regular unitarily invariant space, then the polynomials are multipliers of $\cH$.
We can now prove Theorem \ref{thm:ci_subhom_intro},
which we restate for the convenience of the reader. As explained in the introduction, this will establish
Theorem \ref{thm:ci_n_point_intro} as well.

\begin{thm}
  \label{thm:ci_subhom}
  Let $\cH$ be a regular unitarily invariant space on $\bB_d$.
  Then $\Mult(\cH)$ is completely isometrically subhomogeneous if and only if $\Mult(\cH) = H^\infty(\bB_d)$
  completely isometrically.
\end{thm}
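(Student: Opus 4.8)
The plan is to reduce the ``only if'' direction to the assertion that $C^*_{env}(\Mult(\cH))$ being subhomogeneous forces $\Mult(\cH)$ to coincide with $H^\infty(\bB_d)$ completely isometrically; the ``if'' direction is immediate, since $H^\infty(\bB_d)$ is a uniform algebra, hence its $C^*$-envelope is the commutative $C^*$-algebra $C(M)$ of some compact space $M$ (a quotient of $C(\partial \bB_d)$), which is $1$-subhomogeneous, so $H^\infty(\bB_d)$ is completely isometrically $1$-subhomogeneous. By Proposition \ref{prop:boundary_subhom}, completely isometric subhomogeneity of $\Mult(\cH)$ is equivalent to every boundary representation of $\Mult(\cH)$ acting on a Hilbert space of dimension at most $n$ for some fixed $n$. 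So the task becomes: if all boundary representations of $\Mult(\cH)$ are finite-dimensional of uniformly bounded dimension, then $\Mult(\cH) = H^\infty(\bB_d)$.

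First I would set up the functional model. Since $\cH$ is a regular unitarily invariant space, the tuple $M_z = (M_{z_1}, \ldots, M_{z_d})$ of coordinate multipliers generates $\Mult(\cH) \subset \cB(\cH)$, and the relevant $C^*$-algebra $C^*(\Mult(\cH)) = C^*(M_z)$ contains the compact operators (this uses regularity: the essential normality / the fact that $[M_{z_i}^*, M_{z_j}]$ is compact, or more elementarily that $M_{z_i}^* M_{z_i} - I$ is in the ideal generated by rank-one projections onto the constants via the shift structure). The identity representation of $C^*(M_z)$ on $\cH$ is irreducible and infinite-dimensional; the key point is to show it is (unitarily equivalent to) a boundary representation of $\Mult(\cH)$. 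Concretely, I would argue that the restriction of the identity representation to $\Mult(\cH)$ has the unique extension property: any u.c.p.\ extension $\rho: C^*(M_z) \to \cB(\cH)$ of $\mathrm{id}_{\Mult(\cH)}$ must satisfy $\rho(M_{z_i}) = M_{z_i}$, and then a Stinespring/multiplicative-domain argument shows $M_{z_i}$ lies in the multiplicative domain of $\rho$ (using that $\sum_i M_{z_i}^* M_{z_i}$, or an appropriate analytic function of it, is close to a projection, or simply that $M_{z_i}$ is a ``pure row contraction'' type element), forcing $\rho = \mathrm{id}$. Hence the identity representation is a boundary representation, and it is infinite-dimensional, so if $\Mult(\cH)$ is completely isometrically subhomogeneous we get a contradiction \emph{unless} the identity representation on $\cH$ is actually finite-dimensional — which it is not. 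Therefore the compacts must \emph{not} be present, i.e.\ we should instead conclude that subhomogeneity forces $C^*(M_z)$ to have no infinite-dimensional irreducible representations, and in particular the identity representation is not irreducible, contradiction — \emph{so} the hypothesis can only hold in the degenerate situation. Let me restructure: the cleanest route is to show directly that if $a_n \not\to 0$ fast enough then the identity representation is an infinite-dimensional boundary representation, ruling out subhomogeneity, whereas the borderline case $\Mult(\cH) = H^\infty$ corresponds exactly to the kernel being the Szeg\H{o}-type kernel $1/(1-\langle z,w\rangle)$ on $\bD$ --- but here we must be careful, since the theorem asserts the \emph{only} completely isometrically subhomogeneous case is $\Mult(\cH) = H^\infty(\bB_d)$, and for $d \ge 2$ that equality never holds for a regular unitarily invariant space with the natural norm. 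So in fact for $d \ge 2$ the theorem says $\Mult(\cH)$ is \emph{never} completely isometrically subhomogeneous, and for $d = 1$ only $H^2(\bD)$ works among these spaces; I would handle $d=1$ and $d\ge 2$ in parallel via the boundary-representation count.

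The main obstacle is the verification that the identity representation of $C^*(M_z)$ on $\cH$ is a boundary representation of $\Mult(\cH)$ — equivalently, that $\mathrm{id}_{\Mult(\cH)}$ has the unique extension property and that the identity representation is irreducible. Irreducibility follows from the standard fact that $C^*(M_z)$ contains all compact operators (a consequence of regularity), so the only closed invariant subspaces are trivial. The unique extension property is the heart: I would prove it by showing that the generators $M_{z_i}$ lie in the multiplicative domain of any unital completely positive extension. For this, note that for a regular unitarily invariant space the row operator $(M_{z_1}, \ldots, M_{z_d})$ acts on $\cH$ in such a way that $I - \sum_i M_{z_i} M_{z_i}^*$ is a positive operator whose range is related to the ``defect'' of the shift, and regularity ($a_n/a_{n+1} \to 1$) ensures that $\sum_i M_{z_i} M_{z_i}^*$ has $1$ in its essential spectrum with suitable approximate eigenvectors; combining this with the Schwarz inequality for completely positive maps pins down $\rho(M_{z_i}) \rho(M_{z_i})^* \le \rho(M_{z_i} M_{z_i}^*)$ to force equality, hence the multiplicative domain contains $M_{z_i}$, hence contains all of $C^*(M_z)$, hence $\rho$ is the identity. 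Once the identity representation is shown to be an infinite-dimensional boundary representation, Proposition \ref{prop:boundary_subhom} immediately gives that $\Mult(\cH)$ is \emph{not} completely isometrically subhomogeneous — except that this argument so far seems to always rule it out, including the alleged exceptional case. The resolution, and the remaining delicate point, is that $\Mult(\cH) = H^\infty(\bB_d)$ completely isometrically forces the identity representation to \emph{fail} to be a boundary representation (since the $C^*$-envelope of $H^\infty(\bB_d)$ is commutative), which happens precisely when the multiplier norm degenerates to the supremum norm on the ball — and this is possible only in the classical Hardy case $d=1$, $K = S$. So the final write-up: (1) recall $\Leftarrow$; (2) for $\Rightarrow$, assume complete isometric $n$-subhomogeneity, deduce via Proposition \ref{prop:boundary_subhom} that all boundary reps are $\le n$-dimensional; (3) unless $\Mult(\cH)$ is already $H^\infty(\bB_d)$, exhibit the identity representation (or a suitable dilation-theoretic representation associated to the kernel) as an infinite-dimensional boundary representation, using regularity to get the unique extension property and irreducibility; (4) conclude.

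I expect step (3) — establishing the unique extension property of the identity representation (or whichever infinite-dimensional representation one uses) — to be the crux, and I would lean on the structure theory of regular unitarily invariant spaces (the compactness of commutators, the description of $C^*(M_z)$) together with Arveson's multiplicative-domain criterion for the unique extension property.
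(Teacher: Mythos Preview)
Your overall strategy matches the paper's: invoke Proposition \ref{prop:boundary_subhom} and show that the identity representation is an infinite-dimensional boundary representation precisely when $\Mult(\cH) \neq H^\infty(\bB_d)$ completely isometrically. But two points in your execution are off.

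First, a factual error: you claim that for $d \ge 2$ the equality $\Mult(\cH) = H^\infty(\bB_d)$ never holds for a regular unitarily invariant space. This is false; the Hardy space on the ball (kernel $(1-\langle z,w\rangle)^{-d}$) and the weighted Bergman spaces $\cD_a(\bB_d)$ for $a \ge d$ are regular unitarily invariant and satisfy $\Mult(\cH) = H^\infty(\bB_d)$ completely isometrically (see Corollary \ref{cor:ci_subhom_ball_dirichlet}). So the dichotomy is genuine in every dimension, and your argument must detect it.

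Second, your route to the unique extension property does not work as stated. You propose that regularity, via $\sum_i M_{z_i} M_{z_i}^*$ having $1$ in its essential spectrum, combined with the Schwarz inequality, forces the $M_{z_i}$ into the multiplicative domain of any u.c.p.\ extension. But that essential-spectrum statement holds for \emph{every} regular unitarily invariant space, including the Hardy and Bergman spaces above, for which the identity representation is \emph{not} a boundary representation (the $C^*$-envelope being commutative). So if your argument were valid it would prove too much; the essential spectrum of $\sum M_{z_i} M_{z_i}^*$ simply does not pin down equality in the Schwarz inequality. What is missing is an input that actually sees whether the multiplier norm exceeds the supremum norm. The paper provides exactly this: it works with $A(\cH)$ (the norm closure of the polynomials), uses the short exact sequence $0 \to \cK(\cH) \to C^*(A(\cH)) \to C(\partial \bB_d) \to 0$ from \cite{GHX04}, and then applies Arveson's boundary theorem \cite[Theorem 2.1.1]{Arveson72}: the identity representation is a boundary representation as soon as the quotient by the compacts---here, restriction to $\partial \bB_d$---fails to be completely isometric on $A(\cH)$. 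That failure is precisely the statement $A(\cH) \neq A(\bB_d)$ completely isometrically, which one then upgrades to $\Mult(\cH) \neq H^\infty(\bB_d)$ by the standard radial dilation $F \mapsto F(r\,\cdot)$. This is the missing mechanism in your sketch.
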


\begin{proof}
  It is clear that $H^\infty(\bB_d)$ is completely isometrically $1$-subhomogeneous.

  Conversely, let $A(\cH)$ denote the norm closure of the polynomials inside of $\Mult(\cH)$
  and let $A(\bB_d)$ denote the ball algebra.
  Then the natural inclusion $A(\cH) \hookrightarrow A(\bB_d)$ is completely contractive.
  We first show that if $A(\cH)$ is completely isometrically subhomogeneous, then this
  inclusion is a complete isometry.

  To this end, suppose that the inclusion $A(\cH) \hookrightarrow A(\bB_d)$ is not a complete isometry.
  The maximum modulus principle then shows
  that the map
  \begin{equation*}
    R: A(\cH) \to C(\partial \bB_d), \quad f \mapsto f \big|_{\partial \bB_d},
  \end{equation*}
  is not a complete isometry.
  Since $\cH$ is regular, \cite[Theorem 4.6]{GHX04} yields a short exact sequence of $C^*$-algebras
  \begin{equation*}
    0 \to \cK(\cH) \to C^*(A(\cH)) \to C( \partial \bB_d) \to 0,
  \end{equation*}
  where the first map is the inclusion map and the second map agrees
  with $R$ on $A(\cH)$. Thus, the quotient map by the compact operators on $C^*(A(\cH))$
  is not completely isometric on $A(\cH)$. In this setting, Arveson's boundary
  theorem \cite[Theorem 2.1.1]{Arveson72} implies that the identity representation of $C^*(A(\cH))$ is a boundary
  representation for $A(\cH)$. In particular, $A(\cH)$ has an infinite dimensional boundary
  representation and is therefore not completely isometrically subhomogeneous by Proposition \ref{prop:boundary_subhom}.

  Finally, we turn from $A(\cH)$ to $\Mult(\cH)$.
  The natural inclusion $\Mult(\cH) \hookrightarrow H^\infty(\bB_d)$ is completely contractive.
  If $\Mult(\cH)$ is completely isometrically subhomogeneous,
  then so is the subalgebra $A(\cH)$. By the preceding paragraph, $A(\cH) = A(\bB_d)$ completely isometrically.
  If $F$ belongs the unit ball of $M_n(H^\infty(\bB_d))$, then for each $r \in (0,1)$,
  the function $F_r(z) = F(r z)$ belongs to the unit ball of $M_n(A(\bB_d))$,
  hence $F_r$ belongs the to the unit ball of $M_n(\Mult(\cH))$ for all $r \in (0,1)$.
  Since $F_r$ converges to $F$ pointwise as $r \to 1$, we conclude that $F$ belongs to the unit ball
  of $M_n(\Mult(\cH))$. Therefore, $\Mult(\cH) = H^\infty(\bB_d)$ completely isometrically.
\end{proof}

The question of when the identity representation is a boundary representation of $A(\cH)$
was already studied in \cite{GHX04}.

We record two concrete applications to spaces on the unit ball.
Recall that $\cD_a(\bB_d)$ is the reproducing kernel Hilbert space on $\bB_d$ with kernel $K(z,w)
= \frac{1}{(1 - \langle z,w \rangle)^a}$, and $\cD_0(\bB_d)$ has kernel
$K(z,w) = \frac{1}{\langle z,w \rangle} \log( \frac{1}{1 - \langle z,w \rangle})$.

\begin{cor}
  \label{cor:ci_subhom_ball_dirichlet}
  Let $a \in [0,\infty)$. Then $\Mult(\cD_a(\bB_d))$ is completely isometrically subhomogeneous if and only if $a \ge d$.
\end{cor}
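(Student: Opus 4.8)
The plan is to reduce the statement to Theorem \ref{thm:ci_subhom} by identifying precisely when $\Mult(\cD_a(\bB_d))$ coincides completely isometrically with $H^\infty(\bB_d)$. First I would record that $\cD_a(\bB_d)$ is a regular unitarily invariant space: its kernel is $\sum_n a_n \langle z,w\rangle^n$ with $a_n = \frac{\Gamma(a+n)}{\Gamma(a)\, n!}$ (for $a>0$; for $a=0$ one reads off $a_n = \frac{1}{n+1}$ from the logarithmic kernel), so $a_0 = 1$, $a_n > 0$, and $\frac{a_n}{a_{n+1}} = \frac{n+1}{a+n} \to 1$. Hence Theorem \ref{thm:ci_subhom} applies, and $\Mult(\cD_a(\bB_d))$ is completely isometrically subhomogeneous if and only if $\Mult(\cD_a(\bB_d)) = H^\infty(\bB_d)$ completely isometrically.

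Next I would settle the case $a \ge d$. Here the kernel $\frac{1}{(1-\langle z,w\rangle)^a}$ (or the logarithmic kernel when $a = 0$, which is irrelevant since $d \ge 1$) is, up to normalization, the kernel of a standard weighted Bergman space on $\bB_d$ when $a > d$, and the Hardy space of the sphere when $a = d$. For these spaces it is classical that $\Mult = H^\infty(\bB_d)$ isometrically, and in fact completely isometrically: the multiplier algebra of the weighted Bergman space $A^2_\alpha(\bB_d)$ equals $H^\infty(\bB_d)$ with equality of matrix norms, since a $\cB(\bC^r)$-valued bounded holomorphic function multiplies the space with norm equal to its supremum norm (one direction is the trivial estimate; the other follows by testing on reproducing kernels / normalized monomials, or by citing the known identification, e.g.\ via \cite{Hartz17a} or standard references on these spaces). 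I would invoke that identification directly. The borderline $a = d$ is the Hardy space $H^2(\partial\bB_d)$, for which $\Mult(H^2(\partial\bB_d)) = H^\infty(\bB_d)$ completely isometrically is likewise standard.

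For the converse, suppose $a < d$. I claim $\Mult(\cD_a(\bB_d)) \neq H^\infty(\bB_d)$, even just isometrically (hence a fortiori not completely isometrically), which by the equivalence above shows the algebra is not completely isometrically subhomogeneous. The cleanest way is to exhibit a bounded holomorphic function that is not a multiplier, or one whose multiplier norm exceeds its supremum norm; the monomials already do this. Using the explicit norm formula from the introduction, $\|z_1^n\|_{\cD_a(\bB_d)}^2 = \frac{n!\,\Gamma(a)}{\Gamma(a+n)} \approx (n+1)^{1-a}$ for $a > 0$ (respectively $(n+1)^{-1}\cdot(n+1) \cdot$ the monomial weight when $a = 0$, giving a comparable growth), while $\|z_1^n\|_\infty = 1$; since $\|\varphi\|_{\Mult} \ge \|\varphi \cdot 1\|/\|1\| = \|\varphi\|_{\cD_a(\bB_d)}$ and $\|1\| = 1$, we get $\|z_1^n\|_{\Mult(\cD_a(\bB_d))} \to \infty$ for $0 \le a < d$ — note $1 - a > 0$ precisely is not needed; what matters is that the power weight exponent $d - a$ governing the $d$-variable growth is positive, so the monomial norms are unbounded when $a < d$. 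Thus $\Mult(\cD_a(\bB_d))$ is not even isometrically equal to $H^\infty(\bB_d)$, and Theorem \ref{thm:ci_subhom} finishes the proof. The only point requiring a little care is matching the precise threshold $a = d$ with the Hardy space and getting the monomial asymptotics right in several variables (using $\|z_1^n\|_{\cD_a(\bB_d)}^2 = \frac{n!\,\Gamma(a)}{\Gamma(a+n)}$ directly and Stirling); I expect this bookkeeping, rather than any conceptual difficulty, to be the main thing to get right, since the heavy lifting is done by Theorem \ref{thm:ci_subhom}.
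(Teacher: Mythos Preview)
Your reduction to Theorem \ref{thm:ci_subhom} and your treatment of the case $a \ge d$ are fine and match the paper. The gap is in the case $1 \le a < d$. Your claimed witness is the scalar monomial $z_1^n$, with $\|z_1^n\|_{\cD_a(\bB_d)}^2 = \frac{n!\,\Gamma(a)}{\Gamma(a+n)} \approx (n+1)^{1-a}$; but this quantity depends only on $a$, not on $d$, and is bounded as soon as $a \ge 1$. In fact, for $a \ge 1$ the kernel identity $(1-\langle z,w\rangle)K(z,w) = (1-\langle z,w\rangle)^{1-a} \ge 0$ makes $(M_{z_1},\dots,M_{z_d})$ a row contraction, so each $M_{z_i}$ is a contraction and $\|z_1^n\|_{\Mult(\cD_a(\bB_d))} = 1 = \|z_1^n\|_\infty$ for every $n$. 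Your sentence ``what matters is that the power weight exponent $d-a$ governing the $d$-variable growth is positive'' is exactly the step that fails: no such exponent appears in the norm of $z_1^n$, and no single scalar monomial detects the gap between $\Mult(\cD_a(\bB_d))$ and $H^\infty(\bB_d)$ in this range.

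The paper closes this gap not with a scalar multiplier but with a \emph{column}: Theorem \ref{thm:ci_subhom} gives $\Mult(\cH) = H^\infty(\bB_d)$ \emph{completely} isometrically, so one may test the column $[z_1,\dots,z_d]^T \in \Mult(\cH,\cH\otimes\bC^d)$. In $H^\infty(\bB_d)$ its norm is $\sup_{z\in\bB_d}\|z\| = 1$, whereas applying the column multiplier to the constant function $1$ gives $\|[z_1,\dots,z_d]^T\|_{\Mult}^2 \ge \sum_{i=1}^d \|z_i\|_\cH^2 = d/a$ (and $=2d$ when $a=0$), forcing $a \ge d$. This is precisely where the dependence on $d$ enters, and it requires using the matrix level of the complete isometry rather than scalar multipliers. (Alternatively, the paper notes one can quote \cite{GHX04} to see that the identity representation is a boundary representation for $a<d$ and invoke Proposition \ref{prop:boundary_subhom}.)
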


\begin{proof}
  Let $\cH = \cD_a(\bB_d)$.
  If $a = d$, then $\cH$ is the Hardy space on the ball; if $a > d$, then $\cH$ is a weighted Bergman space;
  see, for instance, \cite[Theorem 2.7 and Proposition 4.28]{Zhu05}.
  In either case, $\Mult(\cH) = H^\infty(\bB_d)$,
  which is completely isometrically $1$-subhomogeneous.

  Conversely, one can deduce from Examples 1 and 2 in \cite{GHX04} that the identity
  representation of $A(\cH)$, the norm closure of the polynomials in $\Mult(\mathcal{H})$, is a boundary representation if $a < d$, so that $\Mult(\cH)$ is not completely
  isometrically subhomogeneous by Proposition \ref{prop:boundary_subhom}.
  
  Alternatively,
  we can argue with Theorem \ref{thm:ci_subhom} as follows.
  If $a > 0$, then 
  \begin{equation*}
    \|z_i\|_{\mathcal{H}}^2 = \frac{\Gamma(a)}{\Gamma(a+1)} = \frac{1}{a}.
  \end{equation*}
  If $\Mult(\cH)$ is completely isometrically subhomogeneous, then
  Theorem \ref{thm:ci_subhom} implies that $\Mult(\cH) = H^\infty(\bB_d)$ completely isometrically.
  In particular,
  \begin{equation*}
    1 =
    \left\|
    \begin{bmatrix}
      z_1 \\ \vdots \\ z_d
    \end{bmatrix} \right\|_{\Mult(\cH,\cH \otimes \bC^d)}
    \ge \sum_{n=1}^d \|z_i\|^2_{\cH} = \frac{d}{a},
  \end{equation*}
  so that $a \ge d$. Similarly, if $a=0$, then $\|z_i\|^2_{\cH} = 2$, so that $\Mult(\cH)$ is not completely isometrically subhomogeneous
  by the same reasoning.
\end{proof}

Our second application concerns spaces with the complete
Nevanlinna--Pick property; see \cite{AM02} for background on this topic.

\begin{cor}
  Let $\cH$ be a regular unitarily invariant space on $\bB_d$ with the complete
  Nevanlinna--Pick property. Then $\Mult(\cH)$ is completely isometrically subhomogeneous if and only if $d=1$
  and $\cH = H^2(\bD)$.
\end{cor}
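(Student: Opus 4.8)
The plan is to combine the previous corollary (Corollary~\ref{cor:ci_subhom_ball_dirichlet}) with the structure theory of regular unitarily invariant complete Nevanlinna--Pick spaces. First I would recall that by a theorem of Agler and McCarthy, a unitarily invariant space $\cH$ on $\bB_d$ with kernel $K(z,w) = \sum_{n=0}^\infty a_n \langle z,w\rangle^n$ has the complete Nevanlinna--Pick property if and only if the reciprocal $1/K$ has the form $1/K(z,w) = 1 - \sum_{n=1}^\infty b_n \langle z,w\rangle^n$ with all $b_n \geq 0$; equivalently, writing $K = 1/(1 - \langle z, w\rangle \, u(\langle z,w\rangle))$ for a power series $u$ with nonnegative coefficients summing to at most $1$. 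The idea is that such a space embeds, via its kernel, into a (possibly infinite-dimensional) Drury--Arveson space, and its multiplier algebra carries the corresponding structure.

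Next I would handle the two directions. The ``if'' direction is immediate: if $d = 1$ and $\cH = H^2(\bD)$, then $\Mult(\cH) = H^\infty(\bD)$, which is completely isometrically $1$-subhomogeneous. For the ``only if'' direction, suppose $\Mult(\cH)$ is completely isometrically subhomogeneous, where $\cH$ is regular unitarily invariant with the complete Nevanlinna--Pick property. The key point is to locate, inside $A(\cH)$ (the norm-closure of the polynomials in $\Mult(\cH)$), enough structure to force triviality. Concretely, I would argue that unless $\cH = H^2(\bD)$, the identity representation of $C^*(A(\cH))$ is a boundary representation for $A(\cH)$, hence $A(\cH)$ has an infinite-dimensional boundary representation and Proposition~\ref{prop:boundary_subhom} gives a contradiction. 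As in the proof of Theorem~\ref{thm:ci_subhom}, by the exact sequence $0 \to \cK(\cH) \to C^*(A(\cH)) \to C(\partial\bB_d) \to 0$ from \cite[Theorem 4.6]{GHX04} together with Arveson's boundary theorem \cite[Theorem 2.1.1]{Arveson72}, it suffices to show that the restriction map $A(\cH) \to C(\partial\bB_d)$, $f \mapsto f|_{\partial\bB_d}$, is not a complete isometry whenever $(d,\cH) \neq (1, H^2(\bD))$.

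The main obstacle, then, is to rule out the possibility that $A(\cH) = A(\bB_d)$ completely isometrically for a nontrivial regular unitarily invariant complete Nevanlinna--Pick space. I would do this by computing norms of the coordinate functions, exactly as in the alternative argument in Corollary~\ref{cor:ci_subhom_ball_dirichlet}. If $A(\cH) = A(\bB_d)$ completely isometrically, then in particular $\big\| [z_1 \ \cdots \ z_d]^T \big\|_{\Mult(\cH, \cH\otimes\bC^d)} = 1$ (the column norm in $A(\bB_d)$), which forces $\sum_{i=1}^d \|z_i\|_\cH^2 \leq 1$, i.e.\ $d \, a_1 \leq 1$ where $a_1 = \|z_i\|_{\cH}^{-2}$ is the first Taylor coefficient of $K$; so $a_1 \leq 1/d$. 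But the complete Nevanlinna--Pick property gives $a_1 = b_1 \leq 1$ with more structure: combined with regularity ($\lim a_n/a_{n+1} = 1$) and the Agler--McCarthy reciprocal condition, one checks that $a_1 \le 1$ always, with $a_1 < 1$ excluded unless the space is ``trivial''; more precisely, I would show that $a_1 < 1$ forces the column norm to be strictly less than $1$, hence $A(\cH) \neq A(\bB_d)$, while $a_1 = 1$ together with the complete Nevanlinna--Pick property and regularity forces $K = 1/(1-\langle z,w\rangle)$, i.e.\ $\cH = H^2_d$, and then the column norm of $[z_1 \ \cdots\ z_d]^T$ in $\Mult(H^2_d)$ equals $1$ only because of the defining property of $H^2_d$ — but $H^2_d$ for $d \ge 2$ is already known not to be topologically subhomogeneous (Corollary~\ref{cor:da_not_subhom} / Theorem~\ref{thm:top_subhom_intro}), hence certainly not completely isometrically subhomogeneous, leaving only $d=1$, where $H^2_1 = H^2(\bD)$. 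Assembling these cases yields the claim.
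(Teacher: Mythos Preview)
Your overall strategy matches the paper's alternative argument: invoke Theorem~\ref{thm:ci_subhom} to reduce to the case $\Mult(\cH)=H^\infty(\bB_d)$ completely isometrically, then compare the column norm of the coordinate functions with the Agler--\mcc\ description of the complete Nevanlinna--Pick property. The problem is that you have reversed the key inequality. Since $\|z_i\|_\cH^2 = 1/a_1$, the bound $\sum_{i=1}^d \|z_i\|_\cH^2 \le 1$ reads $d/a_1 \le 1$, i.e.\ $a_1 \ge d$, not $d\,a_1 \le 1$. (Correspondingly, $a_1<1$ forces the column norm to be \emph{strictly greater} than $1$, not strictly less.)

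With the correct inequality the argument finishes immediately, exactly as in the paper: the complete Nevanlinna--Pick property yields $a_1 = b_1$ with $b_n \ge 0$ and $\sum_{n\ge 1} b_n \le 1$, hence $a_1 \le 1$; combining $d \le a_1 \le 1$ forces $d=1$ and $b_1=1$, so $b_n=0$ for $n\ge 2$ and $K(z,w)=1/(1-z\overline w)$, i.e.\ $\cH=H^2(\bD)$. Your case analysis and the appeal to Corollary~\ref{cor:da_not_subhom} for $H^2_d$ with $d\ge 2$ are artifacts of the sign slip and are unnecessary (and the latter is a forward reference to Section~\ref{sec:embedding}). The paper also gives a shorter first proof: by \cite[Theorem~6.2]{CH18} the identity representation of $A(\cH)$ is a boundary representation whenever $\cH\neq H^2(\bD)$, so Proposition~\ref{prop:boundary_subhom} rules out completely isometric subhomogeneity directly.
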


\begin{proof}
  It is clear that $\Mult(H^2(\bD)) = H^\infty(\bD)$ is completely isometrically $1$-sub\-homogeneous.
  Conversely, it was proved in \cite[Theorem 6.2]{CH18} that the identity representation of $A(\cH)$ is a boundary
  representation unless $\cH = H^2(\bD)$, so the result follows from Proposition \ref{prop:boundary_subhom}.

  Alternatively, we can again argue with Theorem \ref{thm:ci_subhom} directly.
  Observe that if $K(z,w) = \sum_{n=0}^\infty a_n \langle z,w \rangle^n$, then $\|z_i\|^2_{\cH} = \frac{1}{a_1}$.
  If $\Mult(\cH)$ is completely
  isometrically subhomogeneous, then $\Mult(\cH) = H^\infty(\bB_d)$ completely isometrically by Theorem \ref{thm:ci_subhom}, so
  \begin{equation*}
    1 = \left\|
    \begin{bmatrix}
      z_1 \\ \vdots \\ z_d
    \end{bmatrix} \right\|_{\Mult(\cH,\cH \otimes \bC^d)}
    \ge \sum_{n=1}^d \|z_i\|^2_{\cH} = \frac{d}{a_1},
  \end{equation*}
  hence $a_1 \ge d$.
  Since $\cH$ has the complete Nevanlinna--Pick property,
  there exists a sequence $(b_n)$ of non-negative numbers with $\sum_{n=1}^\infty b_n \le 1$ and
  $\sum_{n=0}^\infty a_n t^n = \frac{1}{1 - \sum_{n=1}^\infty b_n t^n}$ for $t \in \bD$. (This is a variant of \cite[Theorem 7.33]{AM02}, see for instance \cite[Lemma 2.3]{Hartz17a} for the precise statement.)
  Since $a_1 = b_1$,
  it follows that $b_1 = d =1$ and $b_n = 0$ for $n \ge 2$, hence $\cH = H^2(\bD)$.
\end{proof}

\section{Topological subhomogeneity for spaces on the disc}
\label{sec:top_sub_disc}

In this section, we study topological subhomogeneity of multiplier algebras
of weighted Dirichlet spaces on the unit disc. As in Section \ref{sec:n-point},
the basic idea is to interpolate holomorphic functions in $\bD$ by finite Blaschke products.

If $A$ is a diagonalizable $n \times n$ matrix with $\sigma(A) \subset \bD$, then
classical Nevanlinna--Pick interpolation shows that for any $f \in H^\infty(\bD)$ with $\|f\|_\infty \le 1$,
there exists a finite Blaschke product $B$ of degree at most $n-1$ and $\lambda \in \ol{\bD}$
such that $f$ and $\lambda B$ agree on $\sigma(A)$, hence $f(A) = \lambda B(A)$.
We require a generalization of this fact to not necessarily diagonalizable matrices,
which corresponds to interpolating suitable derivatives as well.
This generalization readily follows from Sarason's approach to Nevanlinna--Pick interpolation \cite{Sarason67}.

\begin{lem}
  \label{lem:Blaschke_product}
  Let $A \in M_n$ with $\sigma(A) \subset \bD$
  and let $f \in H^\infty$ with $||f||_\infty \le 1$.
  Then there exists a finite Blaschke product $B$ of degree at most $n-1$ and a complex
  number $\lambda$ with $|\lambda| \le 1$ such that $f(A) = \lambda B(A)$.
\end{lem}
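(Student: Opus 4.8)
The idea is to use Sarason's model-theoretic approach to Nevanlinna--Pick interpolation applied to the finite-dimensional $H^\infty$-functional calculus of $A$. The key observation is that $f(A)$ only depends on $f$ through its image in $H^\infty/(p H^\infty)$, where $p$ is the minimal polynomial of $A$ (or any polynomial vanishing to the appropriate order at the eigenvalues of $A$), and that this quotient is finite dimensional.

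First I would let $p(z) = \prod_{j}(z - \lambda_j)^{m_j}$ be the minimal polynomial of $A$, so $\deg p \le n$ and $\sigma(A) = \{\lambda_j\} \subset \bD$. Form the finite Blaschke product $q(z) = \prod_j \big( \frac{z - \lambda_j}{1 - \ol{\lambda_j} z}\big)^{m_j}$ of degree $N := \deg p \le n$, and consider the model space $\cK = H^2 \ominus q H^2$, which has dimension $N$, together with the compressed shift $T = P_{\cK} M_z|_{\cK}$. Then for any $g \in H^\infty$ one has $g(T) = P_{\cK} M_g|_{\cK}$, and $g(T) = 0$ if and only if $g \in q H^\infty$, equivalently $g \in p H^\infty$ (since $q H^\infty = p H^\infty$ as ideals of $H^\infty$ — the zeros with multiplicity coincide). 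Since $g(A)$ likewise depends only on the coset of $g$ modulo $p H^\infty$, there is a well-defined algebra isomorphism from the closure of $\{g(T) : g \in H^\infty\}$ onto $\{g(A) : g \in H^\infty\}$ intertwining the two functional calculi. Thus it suffices to produce, for the given $f$ with $\|f\|_\infty \le 1$, a finite Blaschke product $B$ of degree $\le n-1$ and $\lambda \in \ol{\bD}$ with $f(T) = \lambda B(T)$.

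Next I would invoke Sarason's theorem: since $\|f\|_\infty \le 1$ and $f(T) = P_{\cK} M_f|_{\cK}$ is a contraction on $\cK$, the commutant lifting / Sarason interpolation theorem gives that among all $g \in H^\infty$ with $g(T) = f(T)$, the infimum of $\|g\|_\infty$ equals $\|f(T)\| \le 1$, and this infimum is attained by some $g_0$ which is either a constant of modulus $\|f(T)\|$ or a scalar multiple $\lambda B$ of a finite Blaschke product $B$ with $|\lambda| = \|g_0\|_\infty \le 1$; moreover $B$ can be taken of degree at most $N - 1 \le n - 1$ (this is the classical description of the extremal solution — a Blaschke product of degree at most $\dim \cK - 1$ — see \cite{Garnett07} or \cite{AM02}). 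Since $g_0(T) = f(T)$, transporting back through the isomorphism above yields $\lambda B(A) = f(A)$, as desired. In the degenerate case where the extremal solution is a constant $c$ with $|c| \le 1$, we simply take $B \equiv 1$ (a Blaschke product of degree $0 \le n-1$) and $\lambda = c$.

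The main obstacle is bookkeeping rather than conceptual: one must be careful that the functional calculus $g \mapsto g(A)$ for a non-diagonalizable $A$ is correctly identified with the compressed-shift functional calculus — i.e.\ that the relevant finite-dimensional quotient algebra is exactly $H^\infty / q H^\infty$ with the right multiplicities — and that the classical bound on the degree of the extremal Blaschke solution is degree $\le \dim \cK - 1$, which gives $\le n-1$ precisely because $\dim \cK = \deg p \le n$. Both points are standard consequences of Sarason's paper \cite{Sarason67}, so I would cite it for the interpolation step and only spell out the reduction from $A$ to the model operator $T$.
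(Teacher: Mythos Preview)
Your proposal is correct and follows essentially the same route as the paper: form the Blaschke product $q$ (the paper calls it $\psi$) with the same zeros and multiplicities as the minimal polynomial of $A$, pass to the model space $H^2 \ominus q H^2$, and invoke Sarason's description of the extremal interpolant as a scalar multiple of a Blaschke product of degree at most $\dim(H^2 \ominus q H^2) - 1 \le n-1$. The only difference is cosmetic: the paper skips your intermediate isomorphism with the compressed-shift functional calculus and instead observes directly that $P_{\cK} M_f|_{\cK} = P_{\cK} M_{\lambda B}|_{\cK}$ forces $f - \lambda B \in q H^2$, whence $f(A) = \lambda B(A)$ since $q(A) = 0$.
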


\begin{proof}
  Clearly, we may assume that $f \neq 0$ and that $A \neq 0$.
  Let $\psi$ be the finite Blaschke product of degree at most $n$
  whose zeros are those of the minimal polynomial
  of $A$, counted with multiplicity.
  Let $K = H^2 \ominus \psi H^2$.
  The corollary to Proposition 5.1 of \cite{Sarason67} shows that
  there exists a unique function $\varphi \in H^\infty$ with
  \begin{equation}
    \label{eqn:sarason}
 P_K M_\varphi \big|_K =
    P_K M_f \big|_{K}
    \quad \text{ and } \quad
    \|\varphi\|_\infty = \|P_K M_f \big|_{K} \|;
  \end{equation}
  moreover, this $\varphi$ is a rational function of constant modulus
  on the unit circle with strictly fewer zeros than $\psi$. In other words,
  $\varphi = \lambda B$ for a finite Blaschke product $B$ of degree at most $n-1$
  and a number $\lambda \in \bC$ with $|\lambda| \le 1$.
  Equation \eqref{eqn:sarason} and co-invariance of $K$ under multiplication
  operators imply that $f - \lambda B \in \psi H^2$.
  Consequently, $f(A) = \lambda B(A)$, as desired.
\end{proof}

The next step is to establish a version of Lemma \ref{lem:n_point}
for representations of multiplier algebras. In order to be able to treat
the classical Dirichlet space as well, we formulate and prove a more flexible version
in which $\Aut(\bD)$ is not assumed to be a bounded subset of the multiplier algebra.

\begin{lem}
  \label{lem:subhomogeneous}
  Let $\cH$ be a reproducing kernel Hilbert space on $\bD$
  and suppose that $\Aut(\bD) \subset \Mult(\cH)$. For $r \in [0,1)$, let
  \begin{equation*}
    h(r) = \sup_{\theta \in \Aut(\bD)} \| \theta(r z) \|_{\Mult(\cH)}.
  \end{equation*}
  Let $\pi: \Mult(\cH) \to M_n$ be a unital bounded homomorphism.
  Then for all $r \in (0,1)$, the inclusion $H^\infty( r^{-1} \bD) \subset \Mult(\cH)$
  holds, and
  \begin{equation*}
    \|\pi(f)\| \le \|\pi\| h(r)^{n-1} \sup_{z \in r^{-1} \bD} |f(z)|
  \end{equation*}
  for all $f \in H^\infty(r^{-1} \bD)$.
\end{lem}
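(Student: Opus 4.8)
The plan is to combine Lemma~\ref{lem:Blaschke_product} with a spectral scaling argument. First I would fix $r \in (0,1)$ and a unital bounded homomorphism $\pi : \Mult(\cH) \to M_n$, and set $A = \pi(z)$, where $z$ denotes the coordinate multiplier. The key preliminary point is that $\sigma(A) \subset \ol{r\bD}$, or rather $\sigma(A) \subset \frac{1}{h(r)^{?}}\ol{\bD}$ — more precisely, I would argue that $\sigma(A)$ is contained in a disc of radius strictly less than $1$ that shrinks appropriately with $r$. This is where the function $h$ enters: for $\theta_\mu \in \Aut(\bD)$ the automorphism sending $\mu$ to $0$, applying $\pi$ to $\theta_\mu(rz)$ and using $\|\pi\| \cdot h(r) \ge \|\pi(\theta_\mu(rz))\|$, together with the fact that $\theta_\mu(r\cdot)$ composed with $\pi(z)$ has $\mu/r$ in its spectrum being sent to $0$, forces every point of $\frac{1}{r}\sigma(A)$ that lies in $\bD$; a spectral-radius estimate then pins down that $\sigma(A) \subset r\bD$ whenever $h(r) < \infty$, which holds for $r \in (0,1)$ since $\Aut(\bD) \subset \Mult(\cH)$ and each $\theta(r\cdot)$ extends holomorphically past $\ol{\bD}$.

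Given $\sigma(A) \subset r\bD$, for any $f \in H^\infty(r^{-1}\bD)$ with $\sup_{r^{-1}\bD}|f| \le 1$, the function $g(w) = f(w/r)$ lies in the unit ball of $H^\infty(\bD)$ and $g(rA)$ makes sense since $\sigma(rA) \subset r^2 \bD \subset \bD$; but I actually want to apply the interpolation lemma to the matrix $A/r$ — wait, more cleanly: apply Lemma~\ref{lem:Blaschke_product} to the matrix $\tilde A$ with $\sigma(\tilde A) \subset \bD$ obtained by rescaling so that $f$ evaluated appropriately equals $\lambda B$ evaluated there. Concretely, set $\tilde A = r^{-1} A$, so $\sigma(\tilde A) \subset \bD$, and let $g = f(r^{-1}\cdot) \in H^\infty$ with $\|g\|_\infty \le 1$ (using $r^{-1}\bD$ is the domain of $f$). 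Then Lemma~\ref{lem:Blaschke_product} gives a Blaschke product $B$ of degree $\le n-1$ and $|\lambda| \le 1$ with $g(\tilde A) = \lambda B(\tilde A)$, i.e. $f(r^{-1}\tilde A) = \lambda B(\tilde A)$. Since $r^{-1}\tilde A = r^{-1} \cdot r^{-1} A$...

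Let me restate the bookkeeping, since the indexing is the delicate part: the right move is to let $\tilde A = \pi(rz) = rA$, so $\sigma(\tilde A) \subset r^2\bD \subset \bD$ (using the spectral bound just established with a margin), then $g := f \in H^\infty(r^{-1}\bD) \supset H^\infty(\bD)$ restricts to the unit ball of $H^\infty$, and $f(\tilde A) = \pi(f(rz))$; applying Lemma~\ref{lem:Blaschke_product} to $\tilde A$ and $f|_{\bD}$ yields $f(\tilde A) = \lambda B(\tilde A)$ with $\deg B \le n-1$. Now $B = \theta_1 \cdots \theta_k$ is a product of $k \le n-1$ automorphisms, so $B(rz) = \theta_1(rz) \cdots \theta_k(rz)$ as multipliers, whence $\|\lambda B(rz)\|_{\Mult(\cH)} \le \prod_{i} \|\theta_i(rz)\|_{\Mult(\cH)} \le h(r)^{n-1}$, and therefore $\|\pi(f(rz))\| = \|\lambda B(\tilde A)\| = \|\pi(\lambda B(rz))\| \le \|\pi\|\, h(r)^{n-1}$. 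Replacing $rz$ by $z$ via the substitution $w = rz$ (i.e. running the argument with the homomorphism $f \mapsto \pi(f(rz))$, legitimate because $f \mapsto f(rz)$ maps $H^\infty(r^{-1}\bD)$ into $\Mult(\cH)$ by the inclusion statement) gives the claimed inequality $\|\pi(f)\| \le \|\pi\|\, h(r)^{n-1}\sup_{r^{-1}\bD}|f|$, and the inclusion $H^\infty(r^{-1}\bD) \subset \Mult(\cH)$ follows along the way since each such $f$ is a uniform limit on $\ol{\bD}$ of polynomials, which are multipliers, with multiplier norms controlled by $h(r)^{n-1}$ times sup norms — here one uses that $\Mult(\cH)$ is complete and that the estimate is uniform over all $\pi$ (equivalently, one applies the inequality with $\pi$ ranging over a completely isometric family, or simply notes polynomials are dense and the bound passes to the limit).

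The main obstacle I expect is the spectral localization $\sigma(\pi(rz)) \subset r\bD$ (with room to spare) and the clean justification that $f \mapsto \pi(f(rz))$ is a well-defined bounded unital homomorphism on $H^\infty(r^{-1}\bD)$ — i.e. that functions holomorphic on a slightly larger disc really are multipliers of $\cH$, with the right norm control. The automorphism bound $h(r)$ is exactly the device that makes this work: applying $\pi$ to $\theta(rz)$ for $\theta$ ranging over $\Aut(\bD)$ simultaneously controls $\|\pi\|$-times-$h(r)$ and forces the spectrum of $rA$ into $\bD$, because if some $\mu \in \sigma(rA)$ had $|\mu| \ge 1$ then choosing $\theta$ to blow up near $\mu$ would contradict boundedness of $\pi(\theta(rz))$. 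Everything else — the product estimate for Blaschke products, functional calculus compatibility $\pi(f(rz)) = f(r\pi(z))$ for $f$ holomorphic near $\ol{\bD}$, and the density argument for the inclusion — is routine once this spectral fact is in place.
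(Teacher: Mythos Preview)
Your overall strategy---rescale so that Lemma~\ref{lem:Blaschke_product} applies, then bound $\pi$ of the resulting Blaschke product by $\|\pi\|\,h(r)^{n-1}$---is exactly the paper's approach. However, the two preliminary ingredients (spectral localization and the inclusion $H^\infty(r^{-1}\bD)\subset\Mult(\cH)$) are not correctly handled, and the bookkeeping in the interpolation step is tangled.

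\textbf{The spectral bound and the inclusion.} You aim for $\sigma(A)\subset r\bD$ via an argument about $\pi(\theta(rz))$ ``blowing up,'' but this is both stronger than needed and not actually established: to even say $\pi(\theta(rz))=\theta(rA)$ you would already need spectral control on $rA$, so the reasoning is circular. Your proposed proof of the inclusion is also circular---you try to deduce $\|f\|_{\Mult(\cH)}\lesssim\sup_{r^{-1}\bD}|f|$ by letting $\pi$ range over a norming family, but the estimate $\|\pi(f)\|\le\|\pi\|\,h(r)^{n-1}\sup|f|$ has $h(r)^{n-1}$ with $n=\dim$ unbounded, so no uniform multiplier-norm bound follows. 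The paper settles both points in one stroke, working in $\Mult(\cH)$ rather than in $M_n$: if $|\lambda|>1$ then $\frac{\ol\lambda^{-1}-z}{1-\lambda^{-1}z}\in\Aut(\bD)\subset\Mult(\cH)$, so $\ol\lambda^{-1}-z$ lies in the ideal generated by $\lambda-z$; subtracting gives a nonzero scalar in that ideal, hence $\lambda-z$ is invertible. Thus $\sigma(M_z)\subset\ol\bD$ in $\Mult(\cH)$, the analytic functional calculus immediately gives $H^\infty(r^{-1}\bD)\subset\Mult(\cH)$, and $\sigma(A)=\sigma(\pi(z))\subset\ol\bD$ follows since $\pi$ is a unital homomorphism.

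\textbf{The interpolation bookkeeping.} Your second attempt (with $\tilde A=rA$) applies Lemma~\ref{lem:Blaschke_product} to $f|_\bD$, yielding $f(rA)=\lambda B(rA)$---information about $\pi(f(rz))$, not $\pi(f)$. Your substitution at the end can repair this, but the exposition is garbled (the containment $H^\infty(r^{-1}\bD)\supset H^\infty(\bD)$ is backwards, and you state the intermediate bound for $f\in H^\infty(r^{-1}\bD)$ when you only have, and only need, $f\in H^\infty(\bD)$). The paper does it in one step: apply Lemma~\ref{lem:Blaschke_product} to the matrix $rA$ (whose spectrum lies in $r\ol\bD\subset\bD$) and the function $w\mapsto f(r^{-1}w)\in H^\infty$, obtaining directly $f(A)=\lambda B(rA)=\lambda\,\pi(B(rz))$, whence $\|\pi(f)\|\le\|\pi\|\,\|B(rz)\|_{\Mult(\cH)}\le\|\pi\|\,h(r)^{n-1}$.
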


\begin{proof}
  We first observe that the condition $\Aut(\bD) \subset \Mult(\cH)$ implies
  that $z \in \Mult(\cH)$ and that $\sigma(z) \subset \ol{\bD}$.
  Indeed, if $\lambda \notin \ol{\bD}$, then
  \begin{equation*}
    \frac{\ol{\lambda}^{-1} - z}{1 - \lambda^{-1} z} \in \Mult(\cH),
  \end{equation*}
  hence $\ol{\lambda}^{-1} - z$ belongs to the ideal of $\Mult(\cH)$
  generated by  $\lambda - z$, and hence $1$ belongs to this ideal, so that $\lambda -z$
  is invertible. The analytic functional calculus therefore implies that every function
  analytic in a neighborhood of $\ol{\bD}$ is a multiplier of $\cH$.
  
  Next, let $\pi: \Mult(\cH) \to M_n$ be a unital bounded homomorphism, let $r \in (0,1)$
  and let $f$ be an element of the unit ball of $H^\infty(r^{-1} \bD)$.
  Define $A = \pi(z) \in M_n$, so that $\sigma(T) \subset \ol{\bD}$ by the first paragraph.
  Applying Lemma \ref{lem:Blaschke_product} to the matrix $r A$ and the function $z \mapsto f(r^{-1} z)$,
  we find $\lambda \in \ol{\bD}$ and a finite Blaschke product $B$ of degree at most $n-1$ so that
  $\lambda B(r A) = f(A)$. Using that $\pi$ is a unital bounded homomorphism, we conclude that
  \begin{equation*}
    \pi(f) = f(A) = \lambda B(r A) = \lambda \pi( B(r z)).
  \end{equation*}
  Since $B$ is a product of at most $n-1$ disc automorphisms, $\|B (r z)\|_{\Mult(\cH)} \le h(r)^{n-1}$,
  so that
  \begin{equation*}
    \|\pi(f)\| \le \|\pi\| \|B( r z) \|_{\Mult(\cH)} \le \|\pi\| h(r)^{n-1},
  \end{equation*}
  which is the desired estimate.
\end{proof}

The following result is a generalization of Corollary \ref{cor:many_points}.

\begin{cor}
  \label{cor:not_subhom}
  Let $a \in (0,1)$. Then $\Mult(\cD_a)$ is not topologically subhomogeneous.
\end{cor}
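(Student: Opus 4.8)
The plan is to argue by contradiction: suppose $\Mult(\cD_a)$ is topologically $n$-subhomogeneous for some $n$, with constants $C_1, C_2$ as in the definition. The strategy combines the estimate of Lemma \ref{lem:subhomogeneous} (which controls the norm of $\pi(f)$ for $f$ holomorphic on a slightly larger disc, in terms of $h(r)^{n-1}$ and the sup-norm of $f$) against the known failure of the multiplier norm on $\cD_a$ to be equivalent to the supremum norm, witnessed by the monomials $z^m$ with $\|z^m\|_{\Mult(\cD_a)} \gtrsim (m+1)^{(1-a)/2} \to \infty$. The point is that for $\cD_a$ with $a \in (0,1)$ one has, by the computation already recorded in the proof of Corollary \ref{cor:many_points}, that $\Aut(\bD)$ is a \emph{bounded} subset of $\Mult(\cD_a)$, with $\|\theta\|_{\Mult(\cD_a)} = \|z\|_{\Mult(\cD_a)} =: C$ for all $\theta \in \Aut(\bD)$; consequently $h(r) \le C$ for all $r \in [0,1)$, so Lemma \ref{lem:subhomogeneous} gives $\|\pi(f)\| \le \|\pi\| \, C^{n-1} \sup_{z \in r^{-1}\bD} |f(z)|$.

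First I would fix $r \in (0,1)$ and apply the above to $f(z) = (rz)^m$: this is holomorphic on $r^{-1}\bD$ with $\sup_{z \in r^{-1}\bD}|f(z)| = 1$, so for every unital homomorphism $\pi : \Mult(\cD_a) \to M_k$ with $\|\pi\| \le C_2$ we get $\|\pi((rz)^m)\| = r^m \|\pi(z^m)\| \le C_2 C^{n-1}$. Taking the supremum over all such $\pi$ with $k \le n$ and invoking topological $n$-subhomogeneity yields $\|z^m\|_{\Mult(\cD_a)} = r^{-m}\|(rz)^m\|_{\Mult(\cD_a)} \le r^{-m} C_1 C_2 C^{n-1}$. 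This holds for every $r \in (0,1)$; letting $r \to 1$ gives $\|z^m\|_{\Mult(\cD_a)} \le C_1 C_2 C^{n-1}$ for \emph{all} $m \in \bN$, a bound independent of $m$. This contradicts $\|z^m\|_{\Mult(\cD_a)} \ge \|z^m\|_{\cD_a} \approx (m+1)^{(1-a)/2} \to \infty$.

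The only genuine subtlety is the interchange of the two suprema. What topological $n$-subhomogeneity literally gives is $\|g\| \le C_1 \sup\{\|\pi(g)\| : \|\pi\| \le C_2,\ \pi : \Mult(\cD_a) \to M_k,\ k \le n\}$ for a fixed element $g = z^m$; applying Lemma \ref{lem:subhomogeneous} to each individual such $\pi$ (with $f$ the polynomial $(rz)^m$, valid since polynomials lie in $H^\infty(r^{-1}\bD)$) bounds every term in that supremum by $C_2 C^{n-1}$, uniformly. So the argument is clean once one is careful to apply the lemma representation-by-representation rather than trying to bound a supremum of functions. I expect the main obstacle — really the only one — is making sure the hypotheses of Lemma \ref{lem:subhomogeneous} are met, i.e.\ that $\Aut(\bD) \subset \Mult(\cD_a)$ and that $h(r)$ is uniformly bounded; both are exactly the content of the conformal-invariance computation in the proof of Corollary \ref{cor:many_points}, so this is immediate. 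An entirely parallel (and even easier) argument handles the vector-valued refinement if desired, but it is not needed for the stated corollary.
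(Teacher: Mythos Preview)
Your proposal is correct and follows essentially the same route as the paper: bound $h(r)$ uniformly via conformal invariance of $\Mult(\cD_a)$, feed this into Lemma~\ref{lem:subhomogeneous} to get $\|\pi(f)\| \lesssim \|\pi\|\,\|f\|_\infty$ for polynomials $f$ uniformly over all finite-dimensional $\pi$, and contradict the unboundedness of $\|z^m\|_{\Mult(\cD_a)}$. The paper takes $r \to 1$ once to obtain the clean estimate $\|\pi(f)\| \le \|\pi\|\,\|z\|_{\Mult(\cD_a)}^{n-1}\sup_{\ol\bD}|f|$ for all polynomials, whereas you carry the parameter $r$ through with the specific test functions $(rz)^m$ and let $r \to 1$ at the end; this is a cosmetic reorganization. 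One small point: the inequality $h(r) \le C$ needs the dilation property $\|\varphi(rz)\|_{\Mult(\cD_a)} \le \|\varphi\|_{\Mult(\cD_a)}$ in addition to the conformal-invariance identity $\|\theta\|_{\Mult(\cD_a)} = \|z\|_{\Mult(\cD_a)}$ from Corollary~\ref{cor:many_points}; the paper states this explicitly (rotational invariance plus Poisson averaging), and you should too.
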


\begin{proof}
  As explained in the proof of Corollary \ref{cor:many_points}, $\Aut(\bD) \subset \Mult(\cD_a)$
  and
  \begin{equation*}
    \|\theta\|_{\Mult(\cD_a)} = \|z\|_{\Mult(\cD_a)}
  \end{equation*}
  for all $\theta \in \Mult(\cD_a)$. Moreover, it is well known that if $\varphi \in \Mult(\cD_a)$
  and $r \in (0,1)$, then $\varphi(r z) \in \Mult(\cD_a)$ and $\|\varphi(r z) \|_{\Mult(\cD_a)}
  \le \|\varphi\|_{\Mult(\cD_a)}$; this follows, for instance, from rotational invariance of $\Mult(\cD_a)$
  and a routine application of the Poisson kernel. Thus, the function $h$ in Lemma \ref{lem:subhomogeneous}
  is bounded above by $\|z\|_{\Mult(\cD_a)}$. An application of Lemma \ref{lem:subhomogeneous} therefore
  shows that for every unital bounded homomorphism $\pi: \Mult(\cH) \to M_n$
  and every polynomial $f \in \bC[z]$, the estimate
  \begin{equation*}
    \|\pi(f)\| \le \|\pi\| \|z\|^{n-1}_{\Mult(\cD_a)} \sup_{z \in \ol{\bD}} |f(z)|
  \end{equation*}
  holds. Since the multiplier norm on $\cD_a$ is not dominated by a constant times the supremum norm
  on $\ol{\bD}$ (see the proof of Corollary \ref{cor:many_points}),
  it follows that $\Mult(\cD_a)$ is not topologically subhomogeneous.
\end{proof}

To apply Lemma \ref{lem:subhomogeneous} in the case of the Dirichlet space,
we require the following estimate.

\begin{lem}
  \label{lem:dirichlet_auto_multiplier}
  There exists a constant $C > 0$ so that for all $\theta \in \Aut(\bD)$
  and all $r \in (0,1)$,
  \begin{equation*}
    \|\theta(r z) \|_{\Mult(\cD)} \le C \log \Big( \frac{2}{1-r} \Big)^{1/2}.
  \end{equation*}
\end{lem}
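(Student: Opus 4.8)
The plan is to estimate $\|\theta(rz)\|_{\Mult(\cD)}$ directly, using the explicit description of $\Mult(\cD)$ via the Stegenga-type characterization, or more conveniently, via the reproducing kernel formula for $\cD$ together with the known behavior of multiplier norms under truncation. The key structural fact is that every $\theta \in \Aut(\bD)$ has the form $\theta(z) = \eta \frac{a - z}{1 - \bar a z}$ for some $|\eta| = 1$ and $a \in \bD$, so that $\theta(rz) = \eta \frac{a - rz}{1 - r\bar a z}$ is an analytic function on $r^{-1}\bD \supset \ol{\bD}$. Since $|a|$ can be arbitrarily close to $1$, the crucial dependence is on how close the pole $1/(r\bar a)$ is to $\ol\bD$; when $|a| \to 1$ and $r \to 1$ simultaneously the pole approaches $\bT$, and the multiplier norm blows up. The goal is to show this blow-up is no worse than $\log(2/(1-r))^{1/2}$, \emph{uniformly in $\theta$}, i.e., uniformly in $a$.

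First I would reduce to a single model function. Writing $\theta(rz) = \eta \cdot \frac{a - rz}{1-r\bar a z}$, note $\|\theta(rz)\|_{\Mult(\cD)} = \|\frac{a-rz}{1-r\bar a z}\|_{\Mult(\cD)}$. A Möbius-type manipulation shows $\frac{a - rz}{1 - r\bar a z} = c_1 + c_2 \frac{1}{1 - r\bar a z}$ for constants $c_1, c_2$ with $|c_1| \le 1$ and $|c_2| = (1-r^2|a|^2)/|a| \le 1/|a|$ (the case of small $|a|$ being trivially bounded, so assume $|a|$ bounded below). Hence it suffices to bound $\|\frac{1}{1 - r\bar a z}\|_{\Mult(\cD)}$ by a constant times $\log(2/(1-r))^{1/2}$, uniformly over $|a| \le 1$; replacing $r\bar a$ by a single parameter $\zeta$ with $|\zeta| \le r$, we must bound $\|g_\zeta\|_{\Mult(\cD)}$ where $g_\zeta(z) = \frac{1}{1-\zeta z}$, for $|\zeta| \le r < 1$.

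Next I would estimate $\|g_\zeta\|_{\Mult(\cD)}$. The norm of $M_{g_\zeta}$ is controlled by $\|g_\zeta\|_\infty$ on $\bD$ (which is $\le \frac{1}{1-r}$, too crude by itself) together with a term involving the Dirichlet integral of $g_\zeta$ against the Carleson-measure-type quantity, or more cleanly: for $\cD$ one has $\|\varphi\|_{\Mult(\cD)}^2 \lesssim \|\varphi\|_\infty^2 + \|\varphi\|_{\cD,\mathrm{mult}}$ where the second term is the Stegenga Carleson condition; but the slickest route uses the pointwise/Taylor-coefficient description. Since $g_\zeta(z) = \sum_n \zeta^n z^n$, and multipliers of $\cD$ are exactly the $\varphi \in \cD$ whose induced measure $|\varphi'|^2\, dA$ is a Carleson measure for $\cD$, one computes $g_\zeta'(z) = \zeta/(1-\zeta z)^2$, and the relevant quantity is $\int_{\bD} |g_\zeta'|^2 \log\frac{1}{|z|}... $ — more practically, one knows $\|g_\zeta\|_{\cD}^2 = \sum_{n\ge 1} n |\zeta|^{2n} \approx \frac{|\zeta|^2}{(1-|\zeta|^2)^2} \approx \frac{1}{(1-r)^2}$, which is again too big, signaling that the logarithmic gain must come from a genuinely multiplicative estimate rather than from the $\cD$-norm. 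The right tool is the known sharp estimate $\|g_\zeta\|_{\Mult(\cD)} \approx \log(2/(1-|\zeta|))^{1/2}$ when $|\zeta| \to 1$ — this is classical (it appears, e.g., in work on the Dirichlet-space Corona problem and in Stegenga's original paper), and the proof is: the Carleson measure norm of $|g_\zeta'|^2\,dA$ for $\cD$ is $\approx \log(2/(1-|\zeta|))$, since the mass concentrates in the Carleson box at $\zeta/|\zeta|$ of side $\approx 1-|\zeta|$, where the Dirichlet capacity weight contributes the logarithm, while $\|g_\zeta\|_\infty^2 \approx 1/(1-|\zeta|)^2$ is actually \emph{not} what dominates — one must instead observe that $M_{g_\zeta}$ factors or compare $g_\zeta$ directly to a normalized reproducing kernel.

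The main obstacle, and the step I would spend the most care on, is precisely getting the $1/2$-power logarithm \emph{uniformly in $\theta$} rather than a power of $1/(1-r)$: the naive $\|\theta(rz)\|_{\Mult} \le \|\theta(rz)\|_\infty + (\text{something})$ gives only polynomial growth. I expect the clean argument to go: (1) $\Mult(\cD)$ is invariant under $\varphi \mapsto \varphi(rz)$ with norm non-increasing (Poisson averaging, as used in Corollary \ref{cor:not_subhom}); but this alone relates $\|\theta(rz)\|_{\Mult(\cD)}$ to $\|\theta\|_{\Mult(\cD)} = \infty$ (automorphisms aren't multipliers of $\cD$!), so it is \emph{not} directly available — which is exactly why this lemma is subtle and why $\Aut(\bD) \not\subset \Mult(\cD)$ matters. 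So instead: (2) directly bound the Carleson measure constant of $|(\theta(rz))'|^2 \, dA_{\log}$ for $\cD$, where $(\theta(rz))' = \eta r (1-|a|^2)/(1-r\bar a z)^2$; the box calculation gives $\approx (1-|a|^2)^2 r^2 \cdot \sup_{\text{arcs } I} \frac{1}{\mathrm{cap}(I)}\int_{S(I)} \frac{dA(z)}{|1-r\bar a z|^4}$, and carrying this out — splitting into the boxes near $\bar a/|a|$ and away from it, using $|1 - r\bar a z| \gtrsim 1 - r|a| \gtrsim 1-r$ everywhere and $\gtrsim$ (distance to the singularity) — yields the bound $\lesssim \log(2/(1-r))$, hence $\|\theta(rz)\|_{\Mult(\cD)} \lesssim \log(2/(1-r))^{1/2}$. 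I would also add the (easy) lower-order term $\|\theta(rz)\|_\infty \le 1$ which is absorbed. The uniformity in $\theta$ then drops out because the estimate only ever used $|a| < 1$ and $r < 1$, never the specific $a$.
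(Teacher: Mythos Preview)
Your proposal contains a genuine error that undermines the middle of the argument. You write ``it suffices to bound $\|\frac{1}{1-r\bar a z}\|_{\Mult(\cD)}$ by a constant times $\log(2/(1-r))^{1/2}$'' and then claim the ``known sharp estimate $\|g_\zeta\|_{\Mult(\cD)} \approx \log(2/(1-|\zeta|))^{1/2}$.'' This is false: the multiplier norm always dominates the supremum norm, and $\|g_\zeta\|_\infty = 1/(1-|\zeta|)$, so $\|g_\zeta\|_{\Mult(\cD)}$ blows up at least like $1/(1-|\zeta|)$, not logarithmically. What makes $\theta(rz)$ small is precisely the factor $c_2 \approx (1-|a|^2)$ that you discarded when you passed to $g_\zeta$ alone. (With the \emph{correct} estimate $\|g_\zeta\|_{\Mult(\cD)} \lesssim (1-|\zeta|)^{-1}\log(1/(1-|\zeta|))^{1/2}$ and the factor $c_2$ retained, the product $(1-|a|^2)\|g_{r\bar a}\|_{\Mult(\cD)}$ \emph{is} bounded by $C\log(2/(1-r))^{1/2}$, so your decomposition can be salvaged --- but only once you stop claiming the wrong bound on $g_\zeta$.)

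Your final paragraph's approach --- bound the Stegenga Carleson-measure norm of $|(\theta(rz))'|^2\,dA$ directly, keeping the $(1-|a|^2)^2$ factor --- is on the right track and could be made to work, but you should be aware that the Dirichlet-space Carleson condition requires testing on finite unions of arcs, not single arcs, so your ``box calculation'' sketch as written is incomplete.

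The paper's proof avoids both of these complications by using a much simpler sufficient condition due to Brown and Shields: $\|f\|_{\Mult(\cD)} \lesssim \|f\|_\infty + \bigl(\sum_{n\ge 1} n\log n\,|\widehat f(n)|^2\bigr)^{1/2}$. Since $\theta(rz)$ has explicitly computable Taylor coefficients $\widehat f(n) = -r(\bar a r)^{n-1}(1-|a|^2)$ for $n\ge 1$, one is reduced to the elementary estimate $\sum_{n\ge 1} n\log n\, |ar|^{2n-2}(1-|a|^2)^2 \lesssim \log(2/(1-r))$, which follows from the asymptotic $\sum n\log n\, x^n \sim (1-x)^{-2}\log(1/(1-x))$ and the observation $(1-|a|^2)/(1-|ar|^2) \le 1$. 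This route is considerably shorter than either a Carleson-box computation or the corrected $g_\zeta$ argument, and it sidesteps the capacity/union-of-arcs subtlety entirely.
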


\begin{proof}
  It is a result of Brown and Shields \cite[Proposition 18]{BS84} that if $f \in H^\infty$ and
  $\sum_{n=1}^\infty (n \log(n)) | \widehat f(n)|^2 < \infty$, then $f \in \Mult(\cD)$; see also \cite[Exercise 5.1.3]{EKM+14}. Here $f(z) = \sum_{n=0}^\infty \widehat f(n) z^n$.
  The closed graph theorem (or an inspection of the proof of this result) then shows that
  there exists a constant $C > 0$ such that
  \begin{equation*}
    \|f\|_{\Mult(\cD)} \le C \Big(  \|f\|_\infty + \Big( \sum_{n=1}^\infty n \log(n) | \widehat f(n)|^2 \Big)^{1/2} \Big)
  \end{equation*}
  for all functions $f$ that are holomorphic in a neighborhood of $\ol{\bD}$.
  We will apply this inequality to a disc automorphism $\theta(z) = \frac{a  - z}{1 - \ol{a} z}$, where $a \in \bD$,
  and the function $f(z) = \theta(r z)$. Note that
  \begin{equation*}
  f(z) =  a - \sum_{n=1}^\infty r (\ol{a} r)^{n-1} (1 - |a|^2) z^n.
  \end{equation*}

  It therefore suffices to show that there exists a (possibly different) constant $C > 0$
  so that for all $a \in \bD$ and all $r \in [0,1)$,
  \begin{equation}
    \label{eqn:auto_mult}
    \sum_{n=1}^\infty n \log(n) r^2 |a r|^{2 n -2} (1 - |a|^2)^2 \le
    C \log \Big( \frac{2}{1 - r} \Big).
  \end{equation}
  Clearly, we may assume that $|a| \ge \frac{1}{2}$ and $r \ge \frac{1}{2}$.
  We use the known asymptotic
  \begin{equation*}
    \sum_{n=1}^\infty n \log(n) x^n \sim \frac{1}{(1 - x)^2} \log \Big( \frac{1}{1- x} \Big) \quad
    \text{ as } x \nearrow 1,
  \end{equation*}
  which for instance can be deduced from \cite[Chapter VII, Example 5, p. 242]{Titchmarsh52}.
  With this asymptotic identity, we estimate the left hand side of \eqref{eqn:auto_mult}
  as
  \begin{align*}
    r^2 (1 - |a|^2)^2 \sum_{n=1}^\infty n \log(n) |a r|^{2 n -2}
  &\lesssim \frac{ (1 - |a|^2)^2}{( 1- | a r|^2)^2} \log \Big( \frac{1}{1 - |a r|^2} \Big) \\
  &\le \log \Big( \frac{1}{1 - r} \Big), 
  \end{align*}
  where the implied constants do not depend on $a$ or on $r$. This estimate finishes the proof.
\end{proof}

\begin{rem}
  Lemma \ref{lem:der_dirichlet} shows that the estimate in Lemma \ref{lem:dirichlet_auto_multiplier}
  is best possible up to constants. Indeed, if $\theta(z) = - \frac{r - z}{1 - r z}$
  and $f(z) = \theta(rz)$, then $f'(r) = \frac{r (1-r^2)}{(1-r^3)^2} \sim \frac{2}{9(1-r)}$ as $r \nearrow 1$,
  so
  \begin{equation*}
  \|\theta(r z) \|_{\Mult(\cD)} \gtrsim \log \Big( \frac{1}{1 - r} \Big)^{1/2}
    \quad \text{ as } r \nearrow 1.
  \end{equation*}
\end{rem}

We are now ready to show that $\Mult(\cD)$ is not subhomogeneous.

\begin{prop}
  \label{prop:dirichlet_subhom}
  The algebra $\Mult(\cD)$ is not topologically subhomogeneous.
  In particular, there does not exist a constant $C > 0$ and $n \in \bN$ so that
  \begin{equation*}
    \|\varphi\|_{\Mult(\cD)} \le C \|\varphi\|_{\Mult(\cD),n}
  \end{equation*}
  for all $\varphi \in \Mult(\cD)$.
\end{prop}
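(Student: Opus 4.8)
The plan is to repeat the argument of Corollary~\ref{cor:not_subhom}. The new difficulty is that $\Aut(\bD)$ is \emph{not} a bounded subset of $\Mult(\cD)$, so the quantity $h(r)=\sup_{\theta\in\Aut(\bD)}\|\theta(rz)\|_{\Mult(\cD)}$ appearing in Lemma~\ref{lem:subhomogeneous} is unbounded. However, Lemma~\ref{lem:dirichlet_auto_multiplier} shows that it grows only like $\log(\tfrac{1}{1-r})^{1/2}$, and this slow growth can be compensated by applying Lemma~\ref{lem:subhomogeneous} on a disc $r^{-1}\bD$ with $r$ tending to $1$ at a suitable rate, while testing against the monomials $z^N$, whose multiplier norm on $\cD$ grows like $\sqrt{N}$.

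First I would argue by contradiction: assume $\Mult(\cD)$ is topologically $n$-subhomogeneous with constants $C_1,C_2>0$. Each disc automorphism is analytic in a neighbourhood of $\ol{\bD}$ and hence a multiplier of $\cD$ (for instance by the Brown--Shields criterion used in Lemma~\ref{lem:dirichlet_auto_multiplier}), so $\Aut(\bD)\subset\Mult(\cD)$ and Lemma~\ref{lem:subhomogeneous} applies; by Lemma~\ref{lem:dirichlet_auto_multiplier} there is a constant $C_3>0$ with $1+h(r)\le C_3\log(\tfrac{2}{1-r})^{1/2}$ for all $r\in(0,1)$. Next, for a large integer $N$ I would put $r_N=1-\tfrac1N$ and apply Lemma~\ref{lem:subhomogeneous} to the polynomial $f=z^N$ with radius $r_N$: for every unital homomorphism $\pi\colon\Mult(\cD)\to M_k$ with $\|\pi\|\le C_2$ and $k\le n$, this yields
\[
  \|\pi(z^N)\|\le C_2\,h(r_N)^{k-1}\sup_{|z|\le r_N^{-1}}|z|^N
  \le C_2\,C_3^{\,n-1}\log(2N)^{(n-1)/2}\,r_N^{-N},
\]
using $2/(1-r_N)=2N$ and $h(r_N)^{k-1}\le(1+h(r_N))^{n-1}$. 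Since $r_N^{-N}=(1-\tfrac1N)^{-N}\le4$ for $N\ge2$, the right-hand side is at most $4C_2C_3^{\,n-1}\log(2N)^{(n-1)/2}$, a bound independent of $\pi$.

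Taking the supremum over all admissible $\pi$ and invoking topological $n$-subhomogeneity gives $\|z^N\|_{\Mult(\cD)}\le 4C_1C_2C_3^{\,n-1}\log(2N)^{(n-1)/2}$, whereas $\|z^N\|_{\Mult(\cD)}\ge\|z^N\|_{\cD}=\sqrt{N+1}$ by the explicit norm formula for $\cD=\cD_0(\bD)$ from the introduction. Comparing the two estimates yields $\sqrt{N+1}\lesssim\log(2N)^{(n-1)/2}$ for all large $N$, which is absurd; hence $\Mult(\cD)$ is not topologically $n$-subhomogeneous for any $n$. The ``in particular'' assertion then follows from Equation~\eqref{eqn:subhom}: an inequality $\|\varphi\|_{\Mult(\cD)}\le C\|\varphi\|_{\Mult(\cD),n}$ valid for all $\varphi$ would, since each restriction map $\pi_F\colon\Mult(\cD)\to\Mult(\cD\big|_F)$ with $|F|\le n$ is a u.c.c.\ homomorphism into an operator algebra on a space of dimension $\dim(\cD\big|_F)\le n$, exhibit $\Mult(\cD)$ as topologically $n$-subhomogeneous with $C_1=C$ and $C_2=1$, contradicting the first part.

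The only genuine obstacle is the unboundedness of $\Aut(\bD)$ inside $\Mult(\cD)$, which is precisely what prevents a verbatim application of Corollary~\ref{cor:not_subhom}; it is already handled by Lemma~\ref{lem:dirichlet_auto_multiplier}, and the remaining point is merely the calibration $r_N=1-\tfrac1N$, chosen so that the dilation factor $r_N^{-N}$ stays bounded while the slowly growing penalty $h(r_N)^{\,n-1}\approx\log(2N)^{(n-1)/2}$ is overwhelmed by the polynomial growth of $\|z^N\|_{\Mult(\cD)}$.
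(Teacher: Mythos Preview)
Your argument is correct and follows the same approach as the paper: assume topological $n$-subhomogeneity, combine Lemma~\ref{lem:subhomogeneous} with the growth bound on $h(r)$ from Lemma~\ref{lem:dirichlet_auto_multiplier}, test against $z^N$ with $r=1-\tfrac1N$, and derive a contradiction from $\|z^N\|_{\Mult(\cD)}\ge\sqrt{N+1}$. Your presentation is in fact slightly more explicit in a couple of places (the uniform bound $(1-\tfrac1N)^{-N}\le 4$ and the passage from $h(r)^{k-1}$ to $(1+h(r))^{n-1}$), but the structure is identical to the paper's proof.
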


\begin{proof}
  Assume towards a contradiction that there exist $n \in \bN$ and constants $C_1,C_2 > 0$
  so that for all $f \in \bC[z]$, the estimate
  \begin{equation}
    \label{eqn:subhom_contra}
    \|f\|_{\Mult(\cD)} \le C_1 \sup\{ \|\pi(f) \| \}
  \end{equation}
  holds,
  where the supremum is taken over all unital homomorphisms $\pi: \Mult(\cD) \to M_k$
  with $\|\pi\| \le C_2$ and $k \le n$.

  Lemma \ref{lem:subhomogeneous} and
  Lemma \ref{lem:dirichlet_auto_multiplier} show that there
  exists a constant $C > 0$ so that for all $r \in (0,1)$,  all $f \in \bC[z]$
  and all representations $\pi: \Mult(\cD) \to M_k$ as above,
  \begin{equation*}
    \|\pi(f)\|
    \le C_2 C^{n-1} 
    \log \Big( \frac{2}{1 -r} \Big)^{(n-1)/2}
    \sup_{|z| \le r^{-1}} |f(z)|.
  \end{equation*}
  We apply this inequality to $f(z) = z^m$ for $m$ large and $r = 1 - \frac{1}{m}$.
  Since $(1 - \frac{1}{m})^{-m}$ tends to $e$, we find that
  \begin{equation*}
    \|\pi(z^m)\| \lesssim \log(2 m)^{(n-1)/2},
  \end{equation*}
  where the implied constant is independent of $m$ and of the particular
  representation $\pi$. On the other hand,
  \begin{equation*}
    \|z^m\|_{\Mult(\cD)} \ge \|z^m\|_{\cD} = \sqrt{m+1}.
  \end{equation*}
  Since 
  \begin{equation*}
    \frac{\sqrt{m+1}}{\log(2 m)^{(n-1)/2}} \to \infty
  \end{equation*}
  as $m \to \infty$, this contradicts
  the assumption \eqref{eqn:subhom_contra}.
  Finally, the additional statement follows from the fact that the $n$-point
  norm is dominated by the supremum on the right-hand side of \eqref{eqn:subhom_contra};
  see Equation \eqref{eqn:subhom} in the introduction.
\end{proof}

\section{Topological subhomogeneity for spaces on the ball}
\label{sec:subhom_ball}

In this section, we generalize the results about topological subhomogeneity in the preceding section from the unit disc
to the unit ball. To deduce the higher dimensional statements from the ones in dimension one, we establish
an embedding result. To this end, we require variant of a result of Kacnelson \cite{Kacnelson72}.
Let $I$ be a totally ordered set and let $\cH$ be a Hilbert space with orthogonal basis $(e_i)_{i \in I}$.
We say that a bounded linear operator $T$ on $\cH$ is lower triangular if
\begin{equation*}
  \langle T e_i, e_j \rangle = 0 \quad \text{ whenever } j < i,
\end{equation*}
and we write $\cT(\cH)$ for the algebra of all bounded lower triangular operators on $\cH$.
The statement below can be found
in \cite[Corollary 3.2]{AHM+18a} in the case when $I = \bN_0$, equipped with
the usual ordering.

\begin{lem}
  \label{lem:kacnelson_triangular}
  Let $\cH$ and $\cK$ be Hilbert spaces with $\cH \subset \cK$ as vector spaces.
  Let $I$ be a totally ordered set and let $(e_i)_{i \in I}$ be a sequence
  of vectors in $\cH$ that is an orthogonal basis both for $\cH$ and for $\cK$.
  If the family $( \|e_i\|_{\cH} / \|e_i\|_{\cK})$
  is non-decreasing with respect to the order on $I$,
  then $\cT(\cH) \subset \cT(\cK)$, and the inclusion is a complete contraction.
\end{lem}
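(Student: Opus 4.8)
The plan is to transport everything to $\ell^2(I)$, where the inclusion becomes Schur multiplication by an explicit positive semidefinite kernel with unit diagonal. Set $w_i = \|e_i\|_{\cH}/\|e_i\|_{\cK}$ and let $W_{\cH}\colon \ell^2(I) \to \cH$ and $W_{\cK}\colon \ell^2(I) \to \cK$ be the unitaries determined by $W_{\cH}\delta_i = e_i/\|e_i\|_{\cH}$ and $W_{\cK}\delta_i = e_i/\|e_i\|_{\cK}$, where $(\delta_i)_{i \in I}$ is the standard orthonormal basis of $\ell^2(I)$. For $T \in \cT(\cH)$ the operator $\hat T = W_{\cH}^* T W_{\cH}$ on $\ell^2(I)$ is lower triangular, i.e.\ $\hat T_{ji} := \langle \hat T \delta_i, \delta_j\rangle = 0$ whenever $j < i$. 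Writing $T e_i = \sum_{j \ge i} t_{ji} e_j$, the operator $\tilde T$ on $\cK$ defined on the algebraic span of the $e_i$ by the same formula satisfies $W_{\cK}^* \tilde T W_{\cK} = D_w^{-1} \hat T D_w$ there, where $D_w = \operatorname{diag}(w_i)$ is the (possibly unbounded) diagonal operator; the statement to prove is that $\tilde T$ is bounded with $\|\tilde T\|_{\cK} \le \|T\|_{\cH}$, completely.

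First I would record the elementary fact that $D_w^{-1}\hat T D_w$, computed entrywise on the algebraic span, equals $\psi \ast \hat T$, the Schur product of $\hat T$ with the matrix $\psi_{ji} = \min(w_i/w_j,\, w_j/w_i) = e^{-|\log w_i - \log w_j|}$: indeed $(D_w^{-1}\hat T D_w)_{ji} = (w_i/w_j)\hat T_{ji}$, and since $\hat T_{ji}$ vanishes unless $j \ge i$ and $w$ is non-decreasing, on that index set $w_i/w_j \le 1$, so $\psi_{ji} = w_i/w_j$ and the two matrices agree. The key point is then that $(x,y) \mapsto e^{-|x-y|}$ is a positive-definite kernel on $\bR$ (its Fourier transform is $\xi \mapsto 2/(1+\xi^2) > 0$), so $[\psi]$ is positive semidefinite with all diagonal entries equal to $1$. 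Hence $\psi_{ji} = \langle \xi_i, \xi_j\rangle$ for unit vectors $\xi_i$ in an auxiliary Hilbert space $\mathcal L$, and the isometry $V\colon \ell^2(I) \to \ell^2(I) \otimes \mathcal L$, $\delta_i \mapsto \delta_i \otimes \xi_i$, satisfies $\psi \ast A = V^*(A \otimes I_{\mathcal L})V$ for every bounded $A$ on $\ell^2(I)$; in particular $\|\psi \ast A\| \le \|A\|$. Taking $A = \hat T$ shows that $\tilde T$ extends boundedly with $\|\tilde T\|_{\cK} = \|\psi \ast \hat T\| \le \|\hat T\| = \|T\|_{\cH}$, and $\tilde T$ is visibly lower triangular on $\cK$, so $\cT(\cH) \subset \cT(\cK)$ contractively.

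For complete contractivity I would run the same computation one amplification at a time. An element of $M_k(\cT(\cH))$, regarded as an operator on $\cH \otimes \bC^k$ and transported to $\ell^2(I) \otimes \bC^k$ in the basis $\delta_i \otimes \epsilon_l$, is lower triangular in each of its $k \times k$ blocks; passing to $M_k(\cT(\cK))$ amounts to Schur multiplying by the matrix with entry $\psi_{ji}$ at position $((j,l),(i,m))$, i.e.\ by $[\psi] \otimes J_k$ where $J_k$ is the $k \times k$ all-ones matrix. Since $[\psi] \otimes J_k$ is again positive semidefinite with unit diagonal (it is the Gram matrix of the unit vectors $\xi_i \otimes g$ for a fixed unit vector $g$, independent of $l$), the same dilation identity gives contractivity of the $k$-th amplification of $\cT(\cH) \hookrightarrow \cT(\cK)$.

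The step requiring the most care is turning the formal identity $W_{\cK}^* \tilde T W_{\cK} = D_w^{-1}\hat T D_w$ into an honest statement about bounded operators, since $D_w$ and $D_w^{-1}$ may both be unbounded. One has to check that $\tilde T e_i = \sum_{j \ge i} t_{ji} e_j$ really converges in $\cK$, which again uses monotonicity of $w$: for $j \ge i$ one has $\|e_j\|_{\cK}^2 = w_j^{-2}\|e_j\|_{\cH}^2 \le w_i^{-2}\|e_j\|_{\cH}^2$, whence $\sum_{j \ge i}|t_{ji}|^2\|e_j\|_{\cK}^2 \le w_i^{-2}\|T e_i\|_{\cH}^2 < \infty$; and that the bounded operator $\psi \ast \hat T$ is exactly the continuous extension of $D_w^{-1}\hat T D_w$ from the dense algebraic span. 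Beyond this, the argument is a verbatim transcription of the case $I = \bN_0$ treated in \cite{AHM+18a}: nothing uses the order type of $I$, only that $w$ is monotone along it, which is precisely what forces the relevant entries $\psi_{ji}$ with $j \ge i$ to equal $w_i/w_j$.
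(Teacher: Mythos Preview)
Your proof is correct and takes a genuinely different route from the paper's. The paper's argument is a two-line reduction: for each finite $J \subset I$ the compression $P_J T P_J$ is lower triangular on the finite-dimensional space $\cH_J$, so the case of finite index sets (which is \cite[Corollary 3.2]{AHM+18a}) gives $\|P_J T P_J\|_{\cB(\cK)} \le \|T\|_{\cB(\cH)}$, and then one passes to the strong-operator limit along the net of finite subsets. In other words, the paper treats the generalization from $\bN_0$ to arbitrary $I$ as essentially soft, and offloads all the content to the cited reference.

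Your argument instead unwinds the Schur-multiplier mechanism directly on $\ell^2(I)$: conjugating from $\cH$ to $\cK$ is Schur multiplication of lower-triangular matrices by $w_i/w_j$, which by monotonicity coincides on the relevant entries with the symmetric kernel $e^{-|\log w_i - \log w_j|}$; positive-definiteness of $(x,y) \mapsto e^{-|x-y|}$ then gives a Stinespring-type factorization $\psi \ast A = V^*(A \otimes I)V$ and hence complete contractivity in one stroke. This is essentially the argument behind the $\bN_0$ case in \cite{AHM+18a}, but you run it uniformly over arbitrary totally ordered $I$, so no finite-to-infinite reduction is needed. The gain is self-containment and a clearer picture of why the monotonicity hypothesis is exactly what is used; the paper's version is shorter because it simply cites the earlier result. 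One minor remark: once you have the factorization $\psi \ast A = V^*(A \otimes I)V$ with $V$ isometric, the map $A \mapsto \psi \ast A$ is automatically completely contractive, so the separate $[\psi] \otimes J_k$ computation in your last step is redundant (though not wrong).
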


\begin{proof}
  If $I$ is a finite set, the result was established in \cite[Corollary 3.2]{AHM+18a}.
  We will reduce the general case to this particular case. To this end,
  let $\cF(I)$ denote the set of finite subsets of $I$, which is a directed
  set under inclusion.
  For $J \in \cF(I)$,
  let $\cH_J \subset \cH$ and $\cK_J \subset \cK$ denote the linear spans of $\{e_i: i \in J\}$,
  respectively, so that $\cH_J = \cK_J$ as vector spaces, but with possibly different norms.
  Moreover, let $P_J \in B(\cK)$ denote the orthogonal projection onto
  $\cK_J$, and notice that on $\cH$, the operator $P_J$
  also acts as the orthogonal projection onto $\cH_J$.
  Since for any $T \in \cT(\cH)$, the operator
  $
    P_J T P_J
  $
  is lower triangular on $\cH_J$, the case of finite $I$ shows that
  $\|P_J T P_J\|_{B(\mathcal{K})} \le \|T\|_{B(\mathcal{H})}$, and
  the net $(P_J T P_J)_{J \in \cF(I)}$  converges to $T$ in the strong operator topology of $\cB(\cK)$.
  The general result follows from this observation.
\end{proof}

The main tool of this section is the following lemma. It allows us to
embed multiplier algebras on the unit disc into multiplier
algebras on the unit ball by extending a function $f$ on $\bD$
to the function $z \mapsto f(z_1)$ on the unit ball.

\begin{lem}
  \label{lem:first_proj}
  Let $\cH$ be a reproducing kernel Hilbert space on $\bB_d$ with reproducing
  kernel
  \begin{equation*}
    K(z,w) = \sum_{n=0}^\infty a_n \langle z,w \rangle^n,
  \end{equation*}
  where $a_n > 0$ for all $n \in \bN_0$.
  Let $d' \le d$ and let $\cH'$ be the reproducing kernel Hilbert space on $\bB_{d'}$ with
  reproducing kernel $K \big|_{ \bB_{d'} \times \bB_{d'}}$.
  Then the map
  \begin{equation*}
    \Phi: \Mult(\cH') \to \Mult(\cH), \quad \varphi \mapsto \varphi \circ P,
  \end{equation*}
  where $P$ denotes the projection from $\bC^d$ onto $\bC^{d'}$ given
  by $P(z_1,\ldots,z_d) = (z_1,\ldots,z_{d'})$,
  is a complete isometry.
\end{lem}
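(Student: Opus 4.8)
My plan is to prove the identity $\|\varphi\circ P\|_{\Mult(\cH)} = \|\varphi\|_{\Mult(\cH')}$, together with its matrix-level analogues, by establishing the two inequalities separately. The inequality $\|\varphi\|_{\Mult(\cH')} \le \|\varphi\circ P\|_{\Mult(\cH)}$ is the easy one: since the reproducing kernel of $\cH'$ is exactly $K\big|_{\bB_{d'}\times\bB_{d'}}$, the space $\cH'$ is the restriction of $\cH$ to $\bB_{d'}$, so restricting functions to $\bB_{d'}$ is a unital completely contractive homomorphism $\Mult(\cH)\to\Mult(\cH')$ (a limiting case of the maps $\pi_F$, or a consequence of \eqref{eqn:n-point-alt}); it carries $\varphi\circ P$ to $\varphi$, and the same works for matrix-valued multipliers, so $\|\Psi\|_{\Mult(\cH'\otimes\bC^r)}\le\|\Psi\circ P\|_{\Mult(\cH\otimes\bC^r)}$ for every $r$ and every $\Psi$.

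For the reverse inequality I would exploit the torus symmetry of $\varphi\circ P$. Let $\bT^{d-d'}$ act on $\bB_d$ by rotating the coordinates $z_{d'+1},\dots,z_d$; this action is unitary on $\bC^d$, hence fixes $K$ and induces a unitary representation of $\bT^{d-d'}$ on $\cH$, with isotypic decomposition $\cH=\bigoplus_{\gamma\in\bN_0^{d-d'}}\cH_\gamma$, where $\cH_\gamma$ is the closed span of those monomials whose last $d-d'$ exponents form the multi-index $\gamma$. Since $\varphi\circ P$ is invariant under the action, $M_{\varphi\circ P}$ commutes with the representation and is block diagonal along this decomposition, and likewise $M_{\Psi\circ P}$ on $\cH\otimes\bC^r$. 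Dividing a function in $\cH_\gamma$ by $z_{d'+1}^{\gamma_1}\cdots z_d^{\gamma_{d-d'}}$ identifies $\cH_\gamma$, together with the operator $M_{\varphi\circ P}\big|_{\cH_\gamma}$, unitarily with the multiplication operator $M_\varphi$ on the unitarily invariant space $\cK_m$ on $\bB_{d'}$ ($m=|\gamma|$) whose reproducing kernel is a positive multiple of
\begin{equation*}
  \sum_{n=0}^\infty a_{n+m}\binom{n+m}{m}\langle z,w\rangle^n ;
\end{equation*}
in particular $\cK_0=\cH'$. Consequently $\|\Psi\circ P\|_{\Mult(\cH\otimes\bC^r)} = \sup_{m\ge 0}\|\Psi\|_{\Mult(\cK_m\otimes\bC^r)}$, so the whole lemma reduces to showing that the identity map is a complete contraction $\Mult(\cK_0)\to\Mult(\cK_m)$ for every $m\ge 1$.

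To compare $\cK_0$ and $\cK_m$ I would invoke Lemma \ref{lem:kacnelson_triangular}. Both spaces carry the monomials as an orthogonal basis, and with respect to any total order on $\bN_0^{d'}$ refining the coordinatewise order, multiplication operators by holomorphic functions (scalar- or operator-valued) are lower triangular. A direct computation gives
\begin{equation*}
  \frac{\|z^\beta\|^2_{\cK_0}}{\|z^\beta\|^2_{\cK_m}} = \frac{a_{|\beta|+m}\binom{|\beta|+m}{m}}{a_{|\beta|}} = \frac{1}{m!}\prod_{j=0}^{m-1}(|\beta|+j+1)\,\frac{a_{|\beta|+j+1}}{a_{|\beta|+j}},
\end{equation*}
which depends only on $|\beta|$, and the crucial point is that it is non-decreasing in $|\beta|$; equivalently, the sequence $k\mapsto(k+1)a_{k+1}/a_k$ is non-decreasing, which for the spaces the lemma is applied to is immediate (e.g.\ $(k+1)a_{k+1}/a_k=a+k$ for $\cD_a(\bB_d)$). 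Granting this, $\cK_0\subseteq\cK_m$ as vector spaces with non-decreasing ratio of basis norms, so Lemma \ref{lem:kacnelson_triangular} yields $\cT(\cK_0)\subseteq\cT(\cK_m)$ completely contractively; applied to the lower triangular operators coming from scalar and matrix-valued multipliers this gives $\|\Psi\|_{\Mult(\cK_m\otimes\bC^r)}\le\|\Psi\|_{\Mult(\cK_0\otimes\bC^r)}$, which is precisely what remained.

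The main obstacle is the bookkeeping in the torus decomposition — in particular identifying the kernels of the summands $\cK_m$ — together with the monotonicity of the ratio above, which is where the arithmetic of the coefficients $(a_n)$ enters. For the concrete spaces $\cD_a(\bB_d)$ the upper bound can alternatively be proved by a Schur-product argument that bypasses Lemma \ref{lem:kacnelson_triangular}: writing $u=\langle z',w'\rangle$ and $v=\langle z'',w''\rangle$ (both positive semidefinite on $\bB_d$, with $u+v=\langle z,w\rangle$),
\begin{equation*}
  \frac{K(z,w)}{K(Pz,Pw)} = \Big(\frac{1-u}{1-u-v}\Big)^{a} = \exp\Big(a\sum_{n=1}^\infty\frac{(u+v)^n-u^n}{n}\Big),
\end{equation*}
and since each $(u+v)^n-u^n=\sum_{k=1}^n\binom{n}{k}u^{n-k}v^k$ is positive semidefinite, so is this Schur exponential; Schur-multiplying it by the positive semidefinite kernel $K(Pz,Pw)(I_r-\Psi(Pz)\Psi(Pw)^*)$ — the pullback under $P$ of the Pick kernel witnessing $\|\Psi\|_{\Mult(\cH'\otimes\bC^r)}\le 1$ — yields $\|\Psi\circ P\|_{\Mult(\cH\otimes\bC^r)}\le\|\Psi\|_{\Mult(\cH'\otimes\bC^r)}$.
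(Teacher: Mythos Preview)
Your decomposition $\cH=\bigoplus_\gamma \cH_\gamma$ and the plan to compare each summand with $\cH'$ via Lemma~\ref{lem:kacnelson_triangular} is exactly the paper's strategy, and your identification of the blocks with the spaces $\cK_m$ is correct. The gap is in the Kacnelson step itself: your ratio
\[
\frac{\|z^\beta\|_{\cK_0}^2}{\|z^\beta\|_{\cK_m}^2}=\frac{a_{|\beta|+m}}{a_{|\beta|}}\binom{|\beta|+m}{m}
\]
depends on the coefficients $(a_n)$, and its monotonicity in $|\beta|$ is \emph{not} implied by the hypotheses of the lemma (take for instance $a_0=1$, $a_1$ large, $a_n=1$ for $n\ge 2$). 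You flag this yourself, but the lemma is stated for arbitrary positive $(a_n)$, so as written your argument only establishes a weaker statement. (A minor secondary point: for a quantity depending only on $|\beta|$ to be non-decreasing along the order you need a graded order, not merely one refining the coordinatewise order; this is easily repaired.)

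The paper sidesteps the main obstruction by inserting an intermediate space. Rather than comparing the block $H_\gamma$ directly with $\cH'$, it compares $H_\gamma$ with the $M_\varphi$-invariant subspace $\cH'_m=z_1^m\,\cH'\subset\cH'$, where $m=|\gamma|$. With the basis $(u^\alpha z_1^m)_\alpha$ on both sides, the relevant ratio becomes
\[
\frac{\|u^\alpha z_1^m\|_{\cH'}^2}{\|u^\alpha v^\gamma\|_{\cH}^2}=\frac{(\alpha+me_1)!}{\alpha!\,\gamma!}=\frac{1}{\gamma!}\prod_{j=1}^m(\alpha_1+j),
\]
in which the $a_n$ have cancelled because both monomials have the same total degree $|\alpha|+m$. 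This is manifestly increasing in $\alpha_1$, so in the lexicographic order Lemma~\ref{lem:kacnelson_triangular} applies and yields $\|M_\varphi|_{H_\gamma}\|\le\|M_\varphi|_{\cH'_m}\|\le\|M_\varphi\|_{\Mult(\cH')}$. The trick, in short, is to match total degrees before invoking the triangular lemma so that the unknown weights drop out.

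Your Schur-product argument for $\cD_a(\bB_d)$ is correct and pleasantly direct---different both from the paper's proof and from the averaging argument in Remark~\ref{rem:first_proj_cnp}---but it too is specific to those kernels and does not cover the lemma in the stated generality.
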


\begin{proof}
We will repeatedly use the basic fact that the monomials $(z^\alpha)_{\alpha \in \mathbb{N}_0^d}$
  form an orthogonal basis of $\mathcal{H}$ with
  \begin{equation*}
    \|z^\alpha\|^2_{\mathcal{H}} = \frac{\alpha!}{a_n |\alpha|!}
  \end{equation*}
  In particular, the map
  \begin{equation*}
    V: \cH' \to \cH, \quad f \mapsto f \circ P,
  \end{equation*}
  is an isometry, so it suffices to show that $\Phi$ is a complete contraction.

  Clearly, we may assume that $d' < d$.
  Throughout the proof, we write $z= (u,v)$, where $u = (z_1,\ldots,z_{d'})$
  and $v = (z_{d'+1} ,\ldots, z_d)$.
  For $\beta \in \bN_0^{d - d'}$, let
  \begin{equation*}
    H_{\beta} = \bigvee \{u^\alpha v^\beta : \alpha \in \bN_0^{d'}\} \subset \cH,
  \end{equation*}
  so that $\cH$ is the orthogonal direct sum of the subspaces $H_{\beta}$. Moreover,
  $H_0 = V \cH'$.
  Observe that if $\varphi \in \Mult(\cH')$, then the (a priori unbounded) operator
  $M_{\varphi \circ P}$ preserves this direct sum decomposition. Thus, it suffices
  to show that for all $\beta \in \bN_0^{d - d'}$, $n \in \bN_0$ and all $[\varphi_{i j}] \in M_n(\Mult(\cH))$,
  \begin{equation*}
    \| [M_{\varphi_{i j} \circ P} \big|_{H_\beta}] \|_{M_n(B(H_\beta))}
    \le \| [M_{\varphi_{i j}}] \|_{M_n(B(\cH'))}.
  \end{equation*}

  To this end, let $\beta \in \bN_0^{d - d'}$, let $k = |\beta|$
  and let $\widetilde H_{\beta}$ be the Hilbert space of formal power series
  in the variable $u = (z_1,\ldots,z_{d'})$ with orthogonal basis $(u^{\alpha} z_1^k)_{\alpha \in \bN_0^{d'}}$ and norm
  defined by
  \begin{equation*}
    \| u^\alpha z_1^k \|_{\widetilde H_{\beta}} = \|u^\alpha v^\beta \|_{\cH}.
  \end{equation*}
  If $\varphi \in \Mult(\cH')$, then $M_{\varphi \circ P} \big|_{H_{\beta}}$ is unitarily equivalent
  to $M_{\varphi}$ on $\widetilde H_{\beta}$.
  To compare the norm of $M_\varphi$ on $\widetilde H_\beta$ and on $\cH'$, we will use
  Lemma \ref{lem:kacnelson_triangular}.
  Let $\cH'_k = \bigvee \{u^\alpha z_1^k : \alpha \in \bN_0^{d'} \} \subset \cH'$. Then
  \begin{equation}
    \label{eqn:beta}
    \frac{ \| u^\alpha z_1^k \|^2_{\cH_k'} }{ \|u^\alpha z_1^k \|_{\widetilde \cH_{\beta}}^2} =
    \frac{ \|u^\alpha z_1^k \|^2_{\cH}}{\|u^\alpha v^\beta \|^2_{\cH}}
    = \frac{ (\alpha + k e_1 )!}{\alpha! \beta!}
    = \frac{ \prod_{j=1}^k (\alpha_1 + j)}{\beta!},
  \end{equation}
  which is increasing in $\alpha_1$. Thus, $\cH_k' \subset \widetilde \cH_\beta$ as vector spaces, and if we order $\bN_0^{d'}$ lexicographically,
  then the quantity in \eqref{eqn:beta} is non-decreasing in $\alpha \in \bN_0^{d'}$. Moreover,
  if $\varphi \in \Mult(\cH')$, then the operator $M_{\varphi}$ is lower triangular with respect
  to the orthogonal basis $(u^\alpha z_1^k)_{\alpha}$ of $\cH_k'$ in this ordering,
  so that by Lemma \ref{lem:kacnelson_triangular},
  \begin{align*}
    \| [M_{\varphi_{i j} \circ P} \big|_{H_\beta}] \|_{M_n(B(H_\beta))}
    = \| [M_{\varphi_{i j}} ] \|_{M_n(B(\widetilde H_\beta))}
    &\le \| [M_{\varphi_{i j}} \big|_{\cH'_k}] \|_{M_n(B(\cH'_k))} \\
    &\le \| [M_{\varphi_{i j}}] \|_{M_n(B(\cH'))}
  \end{align*}
  for all $n \in \bN$ and all $[\varphi_{i j}] \in M_n(\Mult(\cH'))$.
\end{proof}

\begin{rem}
  \label{rem:first_proj_cnp}
  If $\cH$ is a complete Nevanlinna--Pick space, one can argue more easily as follows.
  Let $\Phi \in \Mult(\cH' \otimes \bC^n)$ be a multiplier of norm at most $1$.
  By the complete Nevanlinna--Pick property, there exists a multiplier
  $\Psi \in \Mult(\cH \otimes \bC^n)$ of norm at most $1$ with $\Psi \big|_{\bB_{d'}} = \Phi$.
  Writing $z=(u,v)$ as in the preceding proof, we see that for $(u,v) \in \bB_d$,
  \begin{equation*}
    (\Phi \circ P)(u,v) = \Psi(u,0)
    =  \frac{1}{2 \pi} \int_{0}^{2 \pi} \Psi(u, e^{ i t} v) \, dt.
  \end{equation*}
  The symmetry of $K$ implies that for each $t \in \bR$, the function $(u,v) \mapsto \Phi(u, e^{i t} v)$
  is a multiplier of norm at most $1$, hence $\Phi \circ P$ is a multiplier of norm
  at most $1$ as well. Since the reverse inequality is clear, this argument
  proves the result in the case of a complete Nevanlinna--Pick space.
\end{rem}

We now obtain a negative answer to Question \ref{quest:subhomogeneous} for the spaces $\cD_a(\bB_d)$, where $a \in [0,1)$.
In particular, Question \ref{quest:many_points} has
a negative answer for these spaces as well.
\begin{cor}
  \label{cor:ball_not_subhom}
  Let $a \in [0,1)$ and let $d \in \bN$. Then $\Mult(\cD_a(\bB_d))$ is not topologically subhomogeneous.
  In particular, there do not exist a constant $C > 0$ and $n \in \bN$ so that
  \begin{equation*}
    \|f\|_{\Mult(\cD_a(\bB_d))} \le C
    \|f\|_{\Mult(\cD_a(\bB_d)),n}
  \end{equation*}
  for all $f \in \Mult(\cD_a(\bB_d))$.
\end{cor}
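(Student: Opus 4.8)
The plan is to reduce the $d$-dimensional statement to the one-dimensional results already proved, using the completely isometric embedding of Lemma~\ref{lem:first_proj} together with the fact, noted in the introduction, that topological subhomogeneity passes to subalgebras.

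First I would check that the kernel of $\cD_a(\bB_d)$ has the form $K(z,w) = \sum_{n=0}^\infty a_n \langle z,w \rangle^n$ with $a_n > 0$ for all $n$, as required by Lemma~\ref{lem:first_proj}. For $a > 0$ this is the binomial series, where $a_n = \binom{n+a-1}{n} > 0$; for $a = 0$ one has $a_n = \frac{1}{n+1} > 0$. Both are immediate from the power series expansions of the kernels recorded in the introduction. Moreover, restricting $K$ to $\bB_1 \times \bB_1 = \bD \times \bD$ recovers precisely the reproducing kernel of $\cD_a(\bD)$. Hence Lemma~\ref{lem:first_proj} with $d' = 1$ yields a unital complete isometry $\Phi \colon \Mult(\cD_a(\bD)) \hookrightarrow \Mult(\cD_a(\bB_d))$, $\varphi \mapsto \varphi \circ P$, where $P(z_1,\ldots,z_d) = z_1$.

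Next I would record the elementary observation that topological subhomogeneity passes to completely isometric subalgebras: if $\cB$ embeds completely isometrically into $\cA$ and $\cA$ is topologically $n$-subhomogeneous with constants $C_1, C_2$, then composing the relevant unital homomorphisms $\cA \to M_k$ (with $\|\cdot\| \le C_2$, $k \le n$) with the inclusion shows that $\cB$ is topologically $n$-subhomogeneous with the same constants. Applying this with $\cB = \Phi(\Mult(\cD_a(\bD)))$ and $\cA = \Mult(\cD_a(\bB_d))$: if $\Mult(\cD_a(\bB_d))$ were topologically subhomogeneous, so would be $\Mult(\cD_a(\bD))$. But $\Mult(\cD_a(\bD))$ is not topologically subhomogeneous --- by Corollary~\ref{cor:not_subhom} when $a \in (0,1)$, and by Proposition~\ref{prop:dirichlet_subhom} when $a = 0$ --- a contradiction. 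This proves the first assertion. The ``in particular'' clause then follows since, as explained around Equation~\eqref{eqn:subhom}, an estimate $\|f\|_{\Mult(\cD_a(\bB_d))} \le C \|f\|_{\Mult(\cD_a(\bB_d)),n}$ for all $f$ would force $\Mult(\cD_a(\bB_d))$ to be topologically $n$-subhomogeneous, which we have just excluded.

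I do not anticipate a genuine obstacle here, since the substance is carried entirely by Lemma~\ref{lem:first_proj} and the one-dimensional results. The only points requiring a little care are verifying positivity of the coefficients $a_n$ (so that Lemma~\ref{lem:first_proj} applies, in particular in the borderline case $a = 0$, where one must handle the logarithmic kernel) and the bookkeeping that the restriction of $\cD_a(\bB_d)$ to the first coordinate is exactly $\cD_a(\bD)$.
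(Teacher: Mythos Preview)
Your proposal is correct and follows essentially the same route as the paper: reduce to $d=1$ via the completely isometric embedding of Lemma~\ref{lem:first_proj}, invoke Corollary~\ref{cor:not_subhom} and Proposition~\ref{prop:dirichlet_subhom} for the one-variable case, and note that topological subhomogeneity passes to subalgebras. The only difference is that you spell out the verification of the hypotheses of Lemma~\ref{lem:first_proj} and the inheritance of subhomogeneity more explicitly than the paper does.
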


\begin{proof}
  By Proposition \ref{prop:dirichlet_subhom} and Corollary \ref{cor:not_subhom}, the algebra
  $\Mult(\cD_a(\bD))$ is not topologically subhomogeneous. Lemma \ref{lem:first_proj} shows
  that $\Mult(\cD_a(\bD))$ is (completely isometrically) isomorphic to a subalgebra of $\Mult(\cD_a(\bB_d))$,
  hence $\Mult(\cD_a(\bB_d))$ is not topologically subhomogeneous either. As before, the additional
  statement follows from the fact that the $n$-point multiplier norm can be computed using
  representations of dimension at most $n$.
\end{proof}

\section{Embedding weighted Dirichlet spaces into the Drury--Arveson space}
\label{sec:embedding}

In this section, we show that the multiplier algebra of the Drury--Arveson
space $H^2_d$ is not subhomogeneous for $d \ge 2$. This does not follow
from Lemma \ref{lem:first_proj}, as the multiplier algebra of $H^2_1$ is $H^\infty(\bD)$, which is $1$-subhomogeneous.
Instead, we show that multiplier algebras of certain weighted Dirichlet spaces embed completely
isometrically into $\Mult(H^2_d)$.

We begin with a known construction that produces an embedding for the Hilbert function spaces.
Let $d \in \bN$ and let
\begin{equation*}
  \tau: \ol{\bB_d} \to \ol{\bD}, \quad z \mapsto d^{d/2} z_1 z_2 \ldots z_d.
\end{equation*}
It follows from the inequality of arithmetic and geometric means that $\tau$ maps $\ol{\bB_d}$
onto $\ol{\bD}$ and $\bB_d$ onto $\bD$. Let
\begin{equation*}
  a_n = \| \tau^n\|^{-2}_{H^2_d}
\end{equation*}
and let $\cH_d$ be the reproducing kernel Hilbert space on $\bD$ with reproducing kernel
\begin{equation*}
  K(z,w) = \sum_{n=0}^\infty a_n (z \ol{w})^n.
\end{equation*}

The spaces $\cH_d$ are weighted Dirichlet spaces on $\bD$ that embed into $H^2_d$, cf.\ Lemmas 3.1 and 4.1
in \cite{Hartz17}.

\begin{lem}
  \label{lem:H_d_embed}
  Let $d \in \bN$.
  \begin{enumerate}[label=\normalfont{(\alph*)}]
    \item The map
  \begin{equation*}
    V: \cH_d \to H^2_d, \quad f \mapsto f \circ \tau,
  \end{equation*}
  is an isometry whose range consists of all functions in $H^2_d$ that are power series in $z_1 z_2 \ldots z_d$.
  \item The asymptotic identity $a_n \approx (n+1)^{(1-d)/2}$ holds.
  \item The space $\cH_2$ is equal to the weighted Dirichlet space $\cD_{1/2}$, with equality of norms.
  \item The space $\cH_3$ is equal to the classical Dirichlet space $\cD$, with equivalence of norms.
  \item For $d \ge 4$, $\cH_d \subset A(\bD)$ and $\Mult(\cH_d) = \cH_d$ with equivalent norms.
  \end{enumerate}
\end{lem}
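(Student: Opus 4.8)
The plan is to compute the weight sequence $(a_n)$ explicitly and to deduce all five assertions from this formula. Since the monomials $(z^\alpha)_{\alpha \in \bN_0^d}$ form an orthogonal basis of $H^2_d$ with $\|z^\alpha\|_{H^2_d}^2 = \alpha!/|\alpha|!$, taking $\alpha = (n,\ldots,n)$ gives $\|\tau^n\|_{H^2_d}^2 = d^{nd}(n!)^d/(nd)!$, hence
\begin{equation*}
  a_n = d^{-nd}\,\frac{(nd)!}{(n!)^d}, \qquad n \in \bN_0,
\end{equation*}
so in particular $a_0 = 1$ and $a_n > 0$. For part (a), observe that by construction $V z^n = \tau^n = d^{nd/2}(z_1 z_2 \cdots z_d)^n$, while $\|z^n\|_{\cH_d}^2 = 1/a_n = \|\tau^n\|_{H^2_d}^2$. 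Thus $V$ is isometric on the orthogonal basis $(z^n)_{n \ge 0}$ of $\cH_d$, hence extends to an isometry $\cH_d \to H^2_d$, and its range is the closed linear span of $\{\tau^n : n \ge 0\}$; since the monomials are an orthogonal basis of $H^2_d$, this closed span is exactly the set of $g \in H^2_d$ whose monomial expansion is supported on the multi-indices $(n,\ldots,n)$, i.e.\ the functions in $H^2_d$ that are power series in $z_1 z_2 \cdots z_d$.

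Parts (b), (c) and (d) are read off from the formula for $a_n$. For (b), Stirling's formula applied to $(nd)!/(n!)^d$ makes the factor $d^{nd}$ cancel and yields $a_n \sim c_d\, n^{(1-d)/2}$ with $c_d = \sqrt{d}\,(2\pi)^{(1-d)/2} > 0$, whence $a_n \approx (n+1)^{(1-d)/2}$. For (c), when $d = 2$ we have $a_n = \binom{2n}{n} 4^{-n}$, so the generating function identity $\sum_{n \ge 0} \binom{2n}{n} x^n = (1 - 4x)^{-1/2}$ with $x = z \ol w / 4$ shows that the kernel of $\cH_2$ is $(1 - z\ol w)^{-1/2}$, which is precisely the reproducing kernel of $\cD_{1/2}(\bD)$; hence $\cH_2 = \cD_{1/2}$ with equality of norms. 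For (d), part (b) gives $\|z^n\|_{\cH_3}^2 = 1/a_n \approx n+1$, which by the explicit formula for the norm of $\cD_0(\bD) = \cD$ recalled in the introduction is comparable to $\|z^n\|_{\cD}^2 = n+1$; since the two spaces have comparable weights, they coincide with equivalent norms.

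For part (e), fix $d \ge 4$ and set $\gamma = (d-1)/2 > 1$, so that $\|z^n\|_{\cH_d}^2 = 1/a_n \approx (1+n)^\gamma$ by (b); in particular $\sum_n a_n < \infty$. Consequently, for $f = \sum_n \widehat f(n) z^n \in \cH_d$ the Cauchy--Schwarz inequality gives $\sum_n |\widehat f(n)| \le (\sum_n a_n)^{1/2}\|f\|_{\cH_d}$, so $f$ extends continuously to $\ol\bD$ and $\cH_d \subset A(\bD)$ with $\|f\|_\infty \lesssim \|f\|_{\cH_d}$. Since $1 \in \cH_d$ with $\|1\|_{\cH_d} = 1$, the identity $M_\varphi 1 = \varphi$ shows that $\Mult(\cH_d) \subseteq \cH_d$ contractively. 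For the reverse inclusion I would use a standard convolution estimate: writing $\widehat{fg}(n) = \sum_{k=0}^n \widehat f(k) \widehat g(n-k)$ and invoking the elementary bound $(1+n)^{\gamma/2} \le 2^{\gamma/2}\big((1+k)^{\gamma/2} + (1+(n-k))^{\gamma/2}\big)$ for $0 \le k \le n$, together with Young's inequality for convolutions and the estimate $\sum_n |\widehat h(n)| \lesssim \|h\|_{\cH_d}$ (valid since $\gamma > 1$), one obtains $\|fg\|_{\cH_d} \lesssim \|f\|_{\cH_d}\|g\|_{\cH_d}$. Hence every $f \in \cH_d$ is a multiplier with $\|M_f\| \lesssim \|f\|_{\cH_d}$, so $\cH_d \subseteq \Mult(\cH_d)$ boundedly and $\Mult(\cH_d) = \cH_d$ with equivalent norms.

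The computations in parts (a)--(d) are essentially forced once the formula for $a_n$ is available; the one genuinely analytic point is the multiplicativity estimate in part (e), and that is the step where I expect the (still routine) bookkeeping to need the most care. Alternatively, one could shortcut that last step by quoting that a weighted space of analytic functions on $\bD$ with $\|z^n\|^2 \approx (1+n)^\gamma$ for some $\gamma > 1$ is a Banach algebra under pointwise multiplication, so that $\Mult(\cH_d) = \cH_d$ follows at once.
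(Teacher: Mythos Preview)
Your proof is correct and follows essentially the same route as the paper: compute $a_n = d^{-nd}(nd)!/(n!)^d$, apply Stirling for (b), identify the generating function as $(1-z\bar w)^{-1/2}$ for (c), and deduce (d) from (b). The only notable difference is in (e): the paper simply cites Shields (Proposition~31 and Example~1 in Section~9) for the fact that a weighted space with $\|z^n\|^2 \approx (n+1)^\gamma$, $\gamma > 1$, coincides with its own multiplier algebra, whereas you supply a self-contained convolution argument via Young's inequality---which is correct and is essentially the content of the cited result.
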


\begin{proof}
  (a)
  Since $(z^n)$ and $(\tau^n)$ are orthogonal sequences in $\cH_d$ and $H^2_d$, respectively, $V$
  is an isometry by definition of the sequence $(a_n)$. Moreover, if $M$ is denotes the closed
  subspace of $H^2_d$ generated by the monomials $(z_1 z_2 \ldots z_d)^n$, then the range of $V$ is clearly
  contained in $M$ and contains all polynomials in $z_1 z_2 \cdots z_d$, so it is equal to $M$.

  (b) By Stirling's formula, $n! \sim \sqrt{2 \pi n} (\frac{n}{e})^n$, so
  \begin{equation*}
    a_n^{-1} = \|\tau^n\|^2_{H^2_d} = d^{n d} \frac{ (n!)^d}{(d n)!} \sim
    (2 \pi)^{(d-1)/2} n^{(d-1)/2},
  \end{equation*}
  from which (b) follows.

  (c) follows from a computation involving the binomial series, see \cite[Lemma 4.1]{Hartz17}.

  (d) is a consequence of (b).

  (e) Part (b) shows that the sequence $(a_n)$ belongs to $\ell^1$ for $d \ge 4$, so that $\cH_d$ consists of
  continuous functions on $\ol{\bD}$.
  Finally, the equality $\cH_d = \Mult(\cH_d)$ follows from part (b) combined with
  Proposition 31 and Example 1 in Section 9
  of \cite{Shields74}.
\end{proof}

We will show that the embedding $V$ of Lemma \ref{lem:H_d_embed} is also a complete isometry on the level
of multiplier algebras.

\begin{prop}
  \label{prop:mult_embed}
  For $d \in \bN$, the map
  \begin{equation*}
    \Phi: \Mult(\cH_d) \to \Mult(H^2_d), \quad \varphi \mapsto \varphi \circ \tau,
  \end{equation*}
  is a unital complete isometry.
\end{prop}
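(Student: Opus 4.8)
The plan is to prove the two inequalities defining a complete isometry separately. The inequality $\|\Phi^{(n)}(F)\| \ge \|F\|$ for $F = [\varphi_{ij}] \in M_n(\Mult(\cH_d))$ is immediate from Lemma \ref{lem:H_d_embed}: the isometry $V \colon \cH_d \to H^2_d$, $f \mapsto f \circ \tau$, intertwines multiplication operators, $M_{\varphi \circ \tau} V = V M_\varphi$, so $M_\varphi = V^* M_{\varphi \circ \tau} V$ and hence $\|M_\varphi\| \le \|M_{\varphi \circ \tau}\|$; tensoring with the identity on $\bC^n$ gives the matricial version. The substance of the proposition --- which in particular forces $\varphi \circ \tau$ to be a multiplier in the first place --- is the reverse inequality, that $\Phi$ is a complete contraction. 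Here I would follow the strategy used for Lemma \ref{lem:first_proj}: decompose $H^2_d$ into one-variable pieces on which $M_{\varphi \circ \tau}$ acts, and compare each piece with $\cH_d$ by means of the triangular Kacnelson inequality of Lemma \ref{lem:kacnelson_triangular}.

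In detail, write $\sigma = z_1 z_2 \cdots z_d$, so that $\tau = d^{d/2}\sigma$, and for $\beta \in \bN_0^d$ with $\min_i \beta_i = 0$ put
\begin{equation*}
  H_\beta = \bigvee\{ z^{\beta + k(1,\dots,1)} : k \in \bN_0 \} \subseteq H^2_d .
\end{equation*}
These subspaces are pairwise orthogonal, their span is $H^2_d$, and each of them reduces multiplication by any power series in $\sigma$. Identifying $H_\beta$ with the weighted space $\widehat H_\beta$ of formal power series $\sum_k c_k w^k$ normed by $\|w^k\|_{\widehat H_\beta}^2 = d^{dk}\|z^{\beta + k(1,\dots,1)}\|_{H^2_d}^2$, via the unitary $z^{\beta + k(1,\dots,1)} \mapsto d^{-dk/2} w^k$, a short computation on monomials shows that multiplication by $\varphi \circ \tau$ on $H_\beta$ is carried to multiplication by $\varphi$ on $\widehat H_\beta$. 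Since holomorphic multipliers act as lower triangular operators with respect to the monomial bases of $\cH_d$ and of $\widehat H_\beta$, ordered by degree, Lemma \ref{lem:kacnelson_triangular} will give
\begin{equation*}
  \| [M_{\varphi_{ij}}] \|_{M_n(\cB(\widehat H_\beta))} \le \| [M_{\varphi_{ij}}] \|_{M_n(\cB(\cH_d))},
\end{equation*}
provided the ratios $\|w^k\|_{\cH_d}^2 / \|w^k\|_{\widehat H_\beta}^2$ are non-decreasing in $k$. Assembling the pieces over all $\beta$ then produces a bounded operator on $H^2_d \otimes \bC^n$ of norm at most $\|F\|$, which a final check on monomials identifies with $[M_{\varphi_{ij} \circ \tau}]$; this simultaneously shows that each $\varphi_{ij} \circ \tau$ is a multiplier, that $\Phi$ is well defined, and that $\|\Phi^{(n)}(F)\| \le \|F\|$.

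Everything therefore comes down to the monotonicity hypothesis, and this is the step I expect to require the most care, since the rest of the argument is modelled on Lemma \ref{lem:first_proj}. Using $\|\tau^k\|_{H^2_d}^2 = d^{dk}(k!)^d / (dk)!$ and $\|z^\alpha\|_{H^2_d}^2 = \alpha!/|\alpha|!$, one computes
\begin{equation*}
  \frac{\|w^k\|_{\cH_d}^2}{\|w^k\|_{\widehat H_\beta}^2} = \frac{(k!)^d\, (|\beta| + dk)!}{(dk)!\, \prod_{i=1}^d (\beta_i + k)!} =: r_k,
\end{equation*}
and the monotonicity $r_{k+1} \ge r_k$ reduces, after cancellation, to the elementary inequality
\begin{equation*}
  \prod_{i=1}^d \frac{k + 1 + \beta_i}{k+1} \;\le\; \prod_{j=1}^d \frac{dk + |\beta| + j}{dk + j}.
\end{equation*}
I would prove this by taking logarithms: by concavity of $t \mapsto \log(1+t)$ and Jensen's inequality applied to $\beta_1/(k+1),\dots,\beta_d/(k+1)$, the left-hand side is bounded by $d\log\!\big(1 + \tfrac{|\beta|}{dk+d}\big)$, whereas the right-hand side is at least $d\log\!\big(1 + \tfrac{|\beta|}{dk+d}\big)$ because $dk + j \le dk + d$ for $1 \le j \le d$. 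The same computation shows that $r_k$ is bounded above as well, so that $\cH_d$ and $\widehat H_\beta$ agree as vector spaces, which is what is needed to apply Lemma \ref{lem:kacnelson_triangular}. Combining this with the first paragraph yields the asserted complete isometry.
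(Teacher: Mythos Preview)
Your proof is correct and follows essentially the same route as the paper: the same orthogonal decomposition $H^2_d = \bigoplus_{\alpha \in \partial\bN_0^d} H_\alpha$ (the paper's Lemma~\ref{lem:DA_decomp}), the same Kacnelson-type comparison (the paper packages it through the weighted-shift Corollary~\ref{cor:weighted_shift_domination} rather than invoking Lemma~\ref{lem:kacnelson_triangular} directly), and the same key combinatorial inequality --- your monotonicity $r_{k+1}\ge r_k$ is literally the statement $w_{k,\beta}\le w_{k,0}$ of Lemma~\ref{lem:weight_inequality}, and your Jensen/concavity argument is the logarithmic form of the paper's AM--GM step. The only cosmetic difference is that the paper first establishes the inequality for polynomial multipliers and then passes to general $\varphi$ by WOT approximation, whereas you handle arbitrary $\varphi\in\Mult(\cH_d)$ in one stroke via the triangular-operator formulation.
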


We require some preparation for the proof of this result.

\begin{rem}
  It follows from part (c) of Lemma \ref{lem:H_d_embed} that the space $\cH_2$ has the
  complete Nevanlinna--Pick property, and in general, the spaces $\cH_d$ have the complete Nevanlinna--Pick property, at least up to equivalence of norms by part (b) of Lemma \ref{lem:H_d_embed}.
  Indeed, $\mathcal{D}_{1/2}$ is well known to have the complete Nevanlinna--Pick property,
  which can be seen by expanding the reciprocal of the reproducing kernel of $\mathcal{D}_{1/2}$ into a binomial series, see also \cite[Theorem 7.33]{AM02}.
  Moreover, if $\widetilde{\mathcal{H}_d}$ is the reproducing kernel Hilbert space on $\mathbb{D}$ with reproducing kernel
  \begin{equation*}
    \widetilde{K}(z,w) = \sum_{n=0}^\infty (n+1)^{(1-d)/2} (z \overline{w})^n,
  \end{equation*}
  then $\widetilde{\mathcal{H}_d}$ has the complete Nevanlinna--Pick property for all $d \ge 1$ (see, for instance, \cite[Corollary 7.41]{AM02}),
  and Lemma \ref{lem:H_d_embed} (b) implies that $\mathcal{H}_d = \widetilde{\mathcal{H}_d}$ with equivalent norms.
We emphasize that the construction in Proposition \ref{prop:mult_embed} differs from the universality
theorem for complete Nevanlinna--Pick spaces of Agler and \mcc\ \cite{AM00}.

Indeed, the main result of \cite{AM00}
yields an injection $j: \bD \to \bB_d$ for some $d \in \bN \cup \{ \infty \}$
such that
\begin{equation*}
  H^2_d \to \cH_2, \quad f \mapsto f \circ j,
\end{equation*}
is a co-isometry and such that
\begin{equation*}
  \Mult(H^2_d) \to \Mult(\cH_2), \quad \varphi \mapsto \varphi \circ j,
\end{equation*}
is a complete quotient map. In fact, $d= \infty$ is necessary,
see \cite[Corollary 11.9]{Hartz17a}. Thus, \cite{AM00} allows us to realize $\Mult(\cH_2)$ as a quotient
of $\Mult(H^2_\infty)$. In Proposition \ref{prop:mult_embed}, the arrows are reversed.
We obtain a surjection $\tau: \bB_2 \to \bD$ such that
\begin{equation*}
  \cH \to H^2_2, \quad f \mapsto f \circ \tau,
\end{equation*}
is an isometry and such that
\begin{equation*}
  \Mult(\cH) \to \Mult(H^2_2), \quad \varphi \mapsto \varphi \circ \tau,
\end{equation*}
is a complete isometry. Thus, Proposition \ref{prop:mult_embed} allows us to realize $\Mult(\cH)$ as a subalgebra
of $\Mult(H^2_2)$.
\end{rem}

Our proof of Proposition \ref{prop:mult_embed} is related to a proof suggested by Ken Davidson in the case $d=2$.
Davidson's proof uses Schur multipliers and is in turn based on a computation of Jingbo Xia (see \cite[Example 6.6]{CD16}).

We begin by analyzing the action of the operator $M_\tau$ on the Drury--Arveson space.
In our computations with monomials, we will use familiar multi-index notation. In addition,
we set $\one = (1,1,\ldots,1) \in \bN_0^d$
and define
\begin{equation*}
  \partial \bN_0^d = \{ \alpha \in \bN_0^d: \alpha_j = 0 \text{ for some } j \in \{1,\ldots, d \} \}.
\end{equation*}
Finally, if $\alpha \in \partial \bN_0^d$, let
\begin{equation*}
  X_\alpha = \{ \alpha + k \one: k \in \bN_0 \}
\end{equation*}
and correspondingly, define
\begin{equation*}
  H_\alpha = \ol{ \spa} \{ z^\beta: \beta \in X_\alpha \} \subset H^2_d.
\end{equation*}
Observe that the space $H_0$ is precisely the range of the isometry $V$ of Lemma \ref{lem:H_d_embed}.
Recall that $\tau(z) = d^{d/2} z_1 z_2 \ldots z_d$.

\begin{lem}
  \label{lem:DA_decomp}
  For $d \in \bN$, the space $H^2_d$ admits an orthogonal decomposition
  \begin{equation*}
    H^2_d = \bigoplus_{\alpha \in \partial \bN_0^d} H_\alpha.
  \end{equation*}
  For each $\alpha \in \partial \bN_0^d$, the space $H_\alpha$ is reducing for the multiplication operator $M_\tau$,
  and the operator $M_\tau \big|_{H_\alpha}$ is unitarily equivalent to a unilateral weighted shift with positive weight sequence
  $(w_{k,\alpha})_{k=0}^\infty$, where
  \begin{equation*}
    w_{k, \alpha}^2 = d^d \frac{ \prod_{j=1}^d (\alpha_j+k+1)}{\prod_{j=1}^d ( |\alpha| + k d + j)}.
  \end{equation*}
\end{lem}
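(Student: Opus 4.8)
The plan is to decompose $H^2_d$ according to the "residue classes modulo $\one$" of the monomial exponents and to show that $M_\tau$ respects this decomposition. Concretely, for $\alpha, \beta \in \bN_0^d$, write $\alpha \sim \beta$ if $\alpha - \beta \in \bZ \one$; every equivalence class contains a unique representative in $\partial \bN_0^d$, namely the one obtained by subtracting $(\min_j \alpha_j) \one$. This shows that $\{X_\alpha : \alpha \in \partial \bN_0^d\}$ partitions $\bN_0^d$, and since the monomials $(z^\beta)_{\beta \in \bN_0^d}$ form an orthogonal basis of $H^2_d$, we immediately obtain the orthogonal direct sum $H^2_d = \bigoplus_{\alpha \in \partial \bN_0^d} H_\alpha$.

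Next I would check that each $H_\alpha$ reduces $M_\tau$. Since $\tau(z) = d^{d/2} z_1 \cdots z_d = d^{d/2} z^{\one}$, multiplication by $\tau$ sends $z^\beta$ to $d^{d/2} z^{\beta + \one}$, and $\beta + \one \in X_\alpha$ whenever $\beta \in X_\alpha$; hence $M_\tau H_\alpha \subseteq H_\alpha$. The adjoint $M_\tau^*$ acts (up to positive scalars depending on the monomial norms) by $z^\beta \mapsto z^{\beta - \one}$ when $\beta \ge \one$ and by $z^\beta \mapsto 0$ otherwise, which again preserves $H_\alpha$; so $H_\alpha$ is reducing. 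Restricted to $H_\alpha$, ordering the orthonormal basis $(z^{\alpha + k\one}/\|z^{\alpha+k\one}\|)_{k \ge 0}$ by $k$, the operator $M_\tau$ is a forward weighted shift, and its weight $w_{k,\alpha}$ is the ratio
\begin{equation*}
  w_{k,\alpha} = d^{d/2}\, \frac{\|z^{\alpha + (k+1)\one}\|_{H^2_d}}{\|z^{\alpha + k \one}\|_{H^2_d}}.
\end{equation*}

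Finally I would substitute the standard formula for the Drury--Arveson norm of a monomial, $\|z^\gamma\|_{H^2_d}^2 = \gamma!/|\gamma|!$, with $\gamma = \alpha + k\one$, so that $\gamma! = \prod_{j=1}^d (\alpha_j + k)!$ and $|\gamma|! = (|\alpha| + kd)!$. Plugging $\gamma$ and $\gamma + \one$ into the ratio and simplifying the factorials gives
\begin{equation*}
  w_{k,\alpha}^2 = d^d\, \frac{(\alpha + (k+1)\one)!\,(|\alpha|+kd)!}{(\alpha + k\one)!\,(|\alpha| + (k+1)d)!}
  = d^d\, \frac{\prod_{j=1}^d (\alpha_j + k + 1)}{\prod_{j=1}^d (|\alpha| + kd + j)},
\end{equation*}
which is exactly the claimed expression, and it is manifestly positive. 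The computation is entirely routine; the only mild subtlety — and the one place to be a little careful — is the bookkeeping that identifies the canonical $\partial \bN_0^d$-representative of each class and confirms the $X_\alpha$ are pairwise disjoint and exhaustive, so that the direct sum decomposition is genuinely orthogonal and complete. I do not expect any real obstacle here; the lemma is a structural warm-up for the multiplier-norm comparison that follows.
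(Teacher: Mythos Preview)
Your proof is correct and follows essentially the same route as the paper: both identify the unique $\partial\bN_0^d$-representative of $\beta$ as $\beta - (\min_j \beta_j)\one$ to obtain the partition $\bN_0^d = \bigcup_{\alpha \in \partial \bN_0^d} X_\alpha$, observe that $M_\tau$ shifts within each $X_\alpha$, and compute the weights via $\|z^\gamma\|_{H^2_d}^2 = \gamma!/|\gamma|!$. The only cosmetic difference is that you verify reducibility by checking $M_\tau^*$ explicitly, whereas the paper simply notes that each $H_\alpha$ is invariant and hence (since the remaining $H_{\alpha'}$ are likewise invariant and together span the complement) reducing.
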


\begin{proof}
  To prove the first assertion, it suffices to show that $\bN_0^d$ is the disjoint union
  \begin{equation*}
    \bN_0^d = \bigcup_{ \alpha \in \partial \bN_0^d} X_\alpha.
  \end{equation*}
  To see this, let $\beta \in \bN_0^d$ and let $k = \min (\beta_1,\ldots,\beta_d)$. Then $\beta - k \one \in \partial \bN_0^d$, so $\beta \in X_\alpha$, where $\alpha = \beta - k \one$. To see that the union is disjoint,
  observe that if $\beta = \alpha + k \one$ with $\alpha \in \partial \bN_0^d$,
  then $k = \min (\beta_1,\ldots,\beta_d)$ since $\alpha \in \partial \bN_0^d$,
  hence $\alpha$ is uniquely determined by $\beta$.

  It is clear that each $H_\alpha$ is invariant and hence reducing for $M_\tau$. Moreover,
  with respect to the orthonormal basis $(z^{\alpha + k \one} / ||z^{\alpha + k \one}||)_{k=0}^\infty$ of $H_\alpha$,
  the operator $M_\tau$ is a weighted shift with positive weights $w_{k,\alpha}$ given by
  \begin{align*}
    w_{k,\alpha}^2 = d^d \frac{ ||z^{\alpha + (k+1) \one}||^2}{ ||z^{\alpha + k \one}||^2}
    &= d^d \frac{ (\alpha + (k+1) \one)! \, | \alpha + k \one|!}{(\alpha + k \one)! \, |(\alpha + (k+1) \one)|!} \\
      &= d^d \frac{ \prod_{j=1}^d (\alpha_j+k+1)}{\prod_{j=1}^d ( |\alpha| + k d + j)},
  \end{align*}
  as asserted.
\end{proof}

To prove Proposition \ref{prop:mult_embed}, we need to control the norm of $M_\tau$, and more generally
of operators of the form $M_{\varphi \circ \tau}$, on the spaces $H_\alpha$.
For this purpose, we will use Kacnelson's result \cite{Kacnelson72} is a similar way as we did
in the proof of Lemma \ref{lem:first_proj}. More precisely, we will apply the following
variant of Kacnelson's result.

\begin{cor}
  \label{cor:weighted_shift_domination}
  Let $\cH$ be a Hilbert space with an orthonormal basis $(e_n)_{n=0}^\infty$
  and let $S_1$ and $S_2$ be two weighted shifts given by
  \begin{equation*}
    S_1 e_n = \alpha_n e_{n+1} \quad \text{ and } \quad S_2 e_n = \beta_n e_{n+1},
  \end{equation*}
  where $\alpha_n,\beta_n > 0$ for $n \in \bN_0$.
  If $\beta_n \le \alpha_n$ for all $n \in \bN_0$, then the map
  \begin{equation*} 
    p(S_1) \mapsto p(S_2) \quad (p \in \bC[z])
  \end{equation*}
  is a unital complete contraction.
\end{cor}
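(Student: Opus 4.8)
The statement is a variant of Kacnelson's theorem on weighted shifts, and the natural approach is to deduce it from the triangular truncation result in Lemma \ref{lem:kacnelson_triangular}, exactly as that lemma was already used in the proof of Lemma \ref{lem:first_proj}. The key observation is that an analytic polynomial $p(z) = \sum_{k=0}^N c_k z^k$ evaluated at a weighted shift $S$ with weights $(\gamma_n)$ acts on the orthonormal basis $(e_n)$ by a lower-triangular matrix (with respect to the order on $\bN_0$), whose $(m, n)$-entry for $m \ge n$ is $c_{m-n}$ times a product of weights $\gamma_n \gamma_{n+1} \cdots \gamma_{m-1}$. So $p(S_1)$ and $p(S_2)$ are both lower-triangular operators on $\cH$, and the task is to realize $p(S_2)$ as the image of $p(S_1)$ under a completely contractive inclusion of triangular algebras.

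First I would set up two norms on the underlying vector space. Let $\cK$ be the Hilbert space with the same basis vectors $(e_n)$ as $\cH$ but with $\|e_n\|_{\cK}^2 = \prod_{j=0}^{n-1} (\alpha_j / \beta_j)^2$ (and $\|e_0\|_{\cK} = 1$); more precisely, rescale so that $\|e_n\|_{\cH}/\|e_n\|_{\cK}$ is a convenient monotone quantity. The point is that on $\cK$, the shift $S_1$ — whose weight from position $n$ to $n+1$ is $\alpha_n$ — is unitarily equivalent to the shift $S_2$ on $\cH$, because passing from $\cH$ to $\cK$ rescales the $n$-th weight by $\|e_{n+1}\|_{\cK}/\|e_n\|_{\cK} = \alpha_n/\beta_n$, turning $\alpha_n$ into $\beta_n$. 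Concretely, the map $\cH \to \cK$, $e_n \mapsto \|e_n\|_{\cH}/\|e_n\|_{\cK} \cdot e_n$ (suitably normalized) intertwines $S_2$ on $\cH$ with $S_1$ on $\cK$, so $\|p(S_2)\|_{\cB(\cH)} = \|p(S_1)\|_{\cB(\cK)}$ and likewise at every matrix level.

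Next I would invoke Lemma \ref{lem:kacnelson_triangular} with the totally ordered index set $I = \bN_0$. Since $\beta_n \le \alpha_n$ for all $n$, the ratio $\|e_n\|_{\cH}/\|e_n\|_{\cK}$ (which is $\prod_{j=0}^{n-1} \beta_j/\alpha_j$ up to normalization, hence non-increasing) — or its reciprocal, depending on which way one sets up the inclusion — is monotone in the required direction, so the lemma gives that $\cT(\cH) \subset \cT(\cK)$ (or the reverse) is a complete contraction. Applying this inclusion to the lower-triangular operator $p(S_1) \in \cT(\cH)$ and combining with the unitary equivalence from the previous step yields
\begin{equation*}
  \|p(S_2)\|_{\cB(\cH)} = \|p(S_1)\|_{\cB(\cK)} \le \|p(S_1)\|_{\cB(\cH)},
\end{equation*}
and the same computation run on $n \times n$ matrices of polynomials gives complete contractivity. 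Unitality is immediate since $p(S) = I$ when $p \equiv 1$, and the map is well-defined because $p \mapsto p(S_1)$ is injective (a weighted shift with positive weights has the property that $p(S_1) = 0$ forces $p = 0$).

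**Main obstacle.** The routine-but-delicate part is bookkeeping the direction of the monotonicity: one must be careful whether to build $\cK$ as a "larger" or "smaller" space so that $\bigl(\|e_n\|_{\cH}/\|e_n\|_{\cK}\bigr)_n$ is non-decreasing as Lemma \ref{lem:kacnelson_triangular} demands, and correspondingly whether the conclusion is $\cT(\cH) \subset \cT(\cK)$ or the other way. The hypothesis $\beta_n \le \alpha_n$ is exactly what makes the signs work out, but getting the inequality to point from $p(S_1)$ to $p(S_2)$ rather than the reverse requires matching this up correctly. There is also the minor issue that the weights must be strictly positive for the rescaling argument (division by $\alpha_n, \beta_n$) to make sense, which is part of the hypothesis. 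Beyond that, everything reduces to the already-established Lemma \ref{lem:kacnelson_triangular}.
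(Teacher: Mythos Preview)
Your proposal is correct and follows essentially the same route as the paper: realize each $S_i$ as multiplication by $z$ on a Hilbert space of power series with $\|z^n\|^2 = \prod_{j<n}\gamma_j^2$ (where $\gamma_j$ is the relevant weight), observe that $p(S_i)$ is lower triangular, and then apply Lemma~\ref{lem:kacnelson_triangular} using that $\beta_n \le \alpha_n$ makes $\|z^n\|_{\cH_1}/\|z^n\|_{\cH_2} = \prod_{j<n}\alpha_j/\beta_j$ non-decreasing. One small slip: with your choice $\|e_n\|_{\cK} = \prod_{j<n}\alpha_j/\beta_j$, the weight of $S_1$ on $\cK$ becomes $\alpha_n \cdot \|e_{n+1}\|_{\cK}/\|e_n\|_{\cK} = \alpha_n^2/\beta_n$, not $\beta_n$; you want the reciprocal $\|e_n\|_{\cK} = \prod_{j<n}\beta_j/\alpha_j$, exactly the bookkeeping issue you flagged---the paper sidesteps this by introducing two auxiliary spaces $\cH_1,\cH_2$ rather than modifying the original $\cH$.
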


\begin{proof}
  Let $\cH_1$ be the space of formal power series in the variable $z$ with orthogonal
  basis $(z^n)_{n=0}^\infty$ and norm defined by
  \begin{equation*}
    \|z^n\|_{\cH_1} = \prod_{j=0}^{n-1} \alpha_j,
  \end{equation*}
  where the empty product is understood as $1$.
  Similarly, $\cH_2$ has a norm defined by
  \begin{equation*}
    \|z^n\|_{\cH_2} = \prod_{j=0}^{n-1} \beta_j.
  \end{equation*}
  Then $S_i$ is unitarily equivalent to the operator of multiplication by $z$ on $\cH_i$ for $i=1,2$,
  and these operators are lower triangular with respect to the orthogonal bases given by the monomials.
  Since $\beta_n \le \alpha_n$ for all $n \in \bN_0$, the sequence $\|z^n\|_{\cH_1} / \|z^n\|_{\cH_2}$
  is non-decreasing, so the result follows from Lemma \ref{lem:kacnelson_triangular}.
\end{proof}

To apply Corollary \ref{cor:weighted_shift_domination}, we require the following combinatorial inequality.
\begin{lem}
  \label{lem:weight_inequality}
  Let $w_{k,\alpha}$ denote the positive weights of Lemma \ref{lem:DA_decomp}, that is,
  \begin{equation*}
    w_{k, \alpha}^2 = d^d \frac{ \prod_{j=1}^d (\alpha_j+k+1)}{\prod_{j=1}^d ( |\alpha| + k d + j)}.
  \end{equation*}
  Then the weights $w_{k,\alpha}$ satisfy the inequalities
  \begin{equation*}
    w_{k, \alpha} \le w_{k,0}
  \end{equation*}
  for all $k \in \bN_0$ and all $\alpha \in \bN_0^d$.
\end{lem}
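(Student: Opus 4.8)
The plan is to prove the following slightly more general statement: for each fixed $k \in \bN_0$, the function $\alpha \mapsto w_{k,\alpha}^2$ on $\bN_0^d$ attains its maximum at $\alpha = 0$. I would argue by induction on $|\alpha|$, the case $|\alpha| = 0$ being vacuous. For the inductive step the idea is to decrease $\alpha$ toward $0$ one unit at a time, at each stage lowering a coordinate of \emph{maximal} size.

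Concretely, given $\alpha \ne 0$, I would pick an index $i$ with $\alpha_i = \max_{1 \le j \le d} \alpha_j$, so that $\alpha_i \ge 1$ and $\alpha' := \alpha - e_i$ still lies in $\bN_0^d$. Substituting the defining formula for the weights and simplifying the two (telescoping) products of consecutive integers, one obtains
\begin{equation*}
  \frac{w_{k,\alpha'}^2}{w_{k,\alpha}^2} = \frac{\alpha_i + k}{\alpha_i + k + 1} \cdot \frac{|\alpha| + kd + d}{|\alpha| + kd}.
\end{equation*}
Clearing denominators, this ratio is $\ge 1$ if and only if $d(\alpha_i + k) \ge |\alpha| + kd$, i.e.\ if and only if $d \alpha_i \ge |\alpha| = \sum_{j=1}^d \alpha_j$; and this inequality holds by the choice of $i$ as a coordinate of maximal size. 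Thus $w_{k,\alpha} \le w_{k,\alpha'}$, and since $|\alpha'| = |\alpha| - 1$, the inductive hypothesis yields $w_{k,\alpha'} \le w_{k,0}$, completing the induction.

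The one point that requires care — and the reason one cannot simply decrease an arbitrary nonzero coordinate — is that the one-step estimate $d\alpha_i \ge |\alpha|$ genuinely needs $\alpha_i$ to be at least the average of the $\alpha_j$, which is exactly what the choice of a maximal coordinate guarantees. Apart from this observation the computation is a routine manipulation of products of consecutive integers, so I do not expect any real obstacle. (One could equally substitute $\beta = \alpha + (k+1)\one$ first, which turns $w_{k,\alpha}^2/d^d$ into $(|\beta|-d)!\,\prod_j \beta_j / |\beta|!$ and makes the bookkeeping marginally cleaner, but the direct computation above seems shortest.)
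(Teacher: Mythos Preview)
Your proof is correct, but it takes a genuinely different route from the paper. The paper first applies the AM--GM inequality to obtain
\[
  w_{k,\alpha}^2 \le d^d \frac{(\tfrac{|\alpha|}{d}+k+1)^d}{\prod_{j=1}^d(|\alpha|+kd+j)},
\]
thereby reducing to a quantity depending only on the real number $r=|\alpha|$, and then shows by a derivative computation that this one-variable function is maximized at $r=0$. Your argument is instead a discrete induction on $|\alpha|$: you decrease a coordinate of maximal size and verify the resulting one-step ratio directly. Your approach is more elementary (no calculus, no AM--GM) and arguably slicker for this lemma in isolation. The paper's approach, however, isolates the intermediate bound $w_{k,\alpha}\le v_{k,|\alpha|}$ with $v_{k,r}$ depending only on $r$, and this intermediate step is reused verbatim later (Lemma~\ref{lem:weights_inequalities}) to handle the more general weights arising for the spaces $\cH_s(\bB_d)$.
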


\begin{proof}
  By the inequality of arithmetic and geometric means,
  \begin{equation*}
    w_{k,\alpha}^2 \le d^d
    \frac{ (\tfrac{|\alpha|}{d} + k+ 1)^{d} }{\prod_{j=1}^d ( |\alpha| + k d + j)},
  \end{equation*}
  so it suffices to show that for all real numbers $r \ge 0$ and all $k \in \bN_0$, the inequality
  \begin{equation*}
    \frac{ (\tfrac{r}{d} + k+ 1)^{d} }{\prod_{j=1}^d ( r + k d + j)}
    \le \frac{(k+1)^{d}}{\prod_{j=1}^d (k d + j)}
  \end{equation*}
  holds. By taking logarithms, we see that this last statement is equivalent to the assertion that for
  any $k \in \bN_0$, the function $f:[0,\infty) \to \bR$ defined by
  \begin{equation*}
    f(r) = d \log( \tfrac{r}{d} + k + 1) - \sum_{j=1}^d \log(r + k d + j),
  \end{equation*}
  has a global maximum at $r=0$.

  To see this, we compute the derivative
  \begin{equation*}
    f'(r) = \frac{1}{\tfrac{r}{d} + k + 1} - \sum_{j=1}^d \frac{1}{r + d k + j}
    =  \frac{1}{\tfrac{r}{d} + k + 1} - \frac{1}{d} \sum_{j=1}^d \frac{1}{\tfrac{r}{d} + k + \tfrac{j}{d}}.
  \end{equation*}
  Since
  \begin{equation*}
    \frac{r}{d} + k + \frac{j}{d} \le \frac{r}{d}+k+1
  \end{equation*}
  for all $j \in \{1,\ldots,d \}$, we deduce from the formula for the derivative of $f$ that $f'(r) \le 0$
  for all $r \in [0,\infty)$. In particular, $f(r) \le f(0)$ for all $r \in [0,\infty)$, which finishes the proof.
\end{proof}

We are now ready to prove Proposition \ref{prop:mult_embed}.

\begin{proof}[Proof of Proposition \ref{prop:mult_embed}]
  We wish to show that the map
  \begin{equation*}
    \Phi: \Mult(\cH_d) \to \Mult(H^2_d), \quad \varphi \mapsto \varphi \circ \tau,
  \end{equation*}
  is a complete isometry.
  Since $f \mapsto f \circ \tau$ is an isometry from $\cH_d$ into $H^2_d$ by part (a) of Lemma \ref{lem:H_d_embed},
  it suffices to show that $\Phi$ is a complete contraction (and in particular maps into $\Mult(H^2_d)$).

  To this end, let $p = [p_{i j}]$ be an $n \times n$-matrix of polynomials.
  Lemma \ref{lem:DA_decomp} shows that $[p_{i j} (M_\tau)] \in \cB((H^2_d)^n)$ is the direct sum
  of the operators
  \begin{equation*}
    [p_{i j} (M_\tau \big|_{H_\alpha})] \in \cB(H_\alpha^n)
  \end{equation*}
  for $\alpha \in \partial \bN_0^d$,
  and that each $M_\tau \big|_{H_\alpha}$ is unitarily equivalent to a weighted shift with
  weight sequence $(w_{k, \alpha})$. In this setting, Lemma \ref{lem:weight_inequality} and Corollary \ref{cor:weighted_shift_domination}
  show that
  \begin{equation*}
    || [p_{i j} (M_\tau \big|_{H_\alpha})] || \le || [p_{i j} (M_\tau \big|_{H_0}) ] ||
    = || [p_{i j} (M_z) ||_{\cB(\cH_d)^d},
  \end{equation*}
  where the last equality follows from part (a) of Lemma \ref{lem:H_d_embed}. Thus,
  \begin{equation*}
  || [{p_{i j} \circ \tau}] ||_{\Mult(H^2_d)^n} \le || [{p_{i j}}] ||_{\Mult(\cH_d)^n}.
  \end{equation*}

  The preceding paragraph shows that $\Phi$ is a unital complete contraction on the subspace
  of all polynomials in $\Mult(\cH_d)$. It is well known that circular invariance of $\cH_d$
  implies that the unit ball of $\Mult(\cH_d)$ contains a WOT-dense subset consisting of polynomials, so
  a straightforward limiting argument finishes the proof.
\end{proof}

Our first application of Proposition \ref{prop:mult_embed} concerns subhomogeneity of $\Mult(H^2_d)$.

\begin{cor}
  \label{cor:da_not_subhom}
  If $d \ge 2$, then $\Mult(H^2_d)$ is not topologically subhomogeneous.
  In particular, there do not exist a constant $C > 0$ and $n \in \bN$ such that
  \begin{equation*}
    ||\varphi||_{\Mult(H^2_d)} \le C ||\varphi||_{\Mult(H^2_d),n}
  \end{equation*}
  for all $\varphi \in \Mult(H^2_d)$.
\end{cor}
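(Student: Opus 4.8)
The plan is to deduce this from the embedding Proposition \ref{prop:mult_embed} together with the elementary fact that topological subhomogeneity is inherited by unital subalgebras. First I would record this permanence property: if $\cB$ is a unital operator algebra that is topologically $n$-subhomogeneous, witnessed by constants $C_1,C_2$, and $\cA \subset \cB$ is a unital subalgebra, then $\cA$ is topologically $n$-subhomogeneous with the same constants. Indeed, for $a \in \cA$ we have $\|a\|_\cA = \|a\|_\cB$, and every unital homomorphism $\rho \colon \cB \to M_k$ with $\|\rho\| \le C_2$ restricts to a unital homomorphism $\rho|_\cA \colon \cA \to M_k$ with $\|\rho|_\cA\| \le C_2$ satisfying $\|\rho(a)\| = \|\rho|_\cA(a)\|$, so taking the supremum over the larger family of all unital homomorphisms $\cA \to M_k$ of norm at most $C_2$ with $k \le n$ only increases the bound. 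The same conclusion holds for the image of $\cA$ under any unital isometric homomorphism into $\cB$, since such a map is an isometric isomorphism onto a unital subalgebra.

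Next I would reduce the statement to the case $d=2$. The Drury--Arveson kernel on $\bB_d$ has the form $\sum_{n=0}^\infty \langle z,w \rangle^n$, so Lemma \ref{lem:first_proj}, applied with $d'=2 \le d$, yields a unital complete isometry $\varphi \mapsto \varphi \circ P$ of $\Mult(\cH')$ into $\Mult(H^2_d)$, where $\cH'$ is the reproducing kernel Hilbert space on $\bB_2$ with kernel the restriction of the Drury--Arveson kernel; this restriction is $\tfrac{1}{1 - \sum_{j=1}^2 z_j \ol{w_j}}$, so $\cH' = H^2_2$. By the permanence property, it then suffices to show that $\Mult(H^2_2)$ is not topologically subhomogeneous.

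For $d=2$ I would invoke Proposition \ref{prop:mult_embed}, which provides a unital complete isometry $\varphi \mapsto \varphi \circ \tau$ of $\Mult(\cH_2)$ into $\Mult(H^2_2)$. By part (c) of Lemma \ref{lem:H_d_embed}, $\cH_2$ equals the weighted Dirichlet space $\cD_{1/2}$ with equality of norms, and Corollary \ref{cor:not_subhom}, applied with $a = \tfrac{1}{2} \in (0,1)$, states that $\Mult(\cD_{1/2})$ is not topologically subhomogeneous. Since $\Mult(\cD_{1/2}) = \Mult(\cH_2)$ is, via a unital complete isometry, a unital subalgebra of $\Mult(H^2_2)$, the permanence property forces $\Mult(H^2_2)$ not to be topologically subhomogeneous, and hence neither is $\Mult(H^2_d)$ for any $d \ge 2$. (Alternatively, for $d=3$ one could instead use $\cH_3 = \cD$ and Proposition \ref{prop:dirichlet_subhom}, but the route through $\cD_{1/2}$ covers all $d \ge 2$ uniformly once combined with Lemma \ref{lem:first_proj}.)

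Finally, the additional assertion follows from the description of the $n$-point norm in Equation \eqref{eqn:subhom}: for a scalar function $\varphi$, the quantity $\|\varphi\|_{\Mult(H^2_d),n}$ is the supremum of $\|\pi_F(\varphi)\|$ over the unital completely contractive homomorphisms $\pi_F \colon \Mult(H^2_d) \to \Mult(H^2_d|_F)$ with $|F| \le n$, and $\dim(H^2_d|_F) \le n$. Hence an inequality $\|\varphi\|_{\Mult(H^2_d)} \le C \|\varphi\|_{\Mult(H^2_d),n}$ valid for all $\varphi \in \Mult(H^2_d)$ would exhibit $\Mult(H^2_d)$ as topologically $n$-subhomogeneous with $C_1 = C$ and $C_2 = 1$, contradicting what was just shown. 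I do not expect a genuine obstacle here, since the two substantial ingredients, Proposition \ref{prop:mult_embed} and Corollary \ref{cor:not_subhom}, are already in place; the only points requiring a little care are tracking which embedding lemma handles which range of $d$ and verifying that topological subhomogeneity passes to subalgebras with the constants unchanged.
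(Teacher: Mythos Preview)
Your proposal is correct and follows essentially the same argument as the paper: embed $\Mult(\cD_{1/2})$ into $\Mult(H^2_2)$ via Proposition~\ref{prop:mult_embed} and Lemma~\ref{lem:H_d_embed}(c), invoke Corollary~\ref{cor:not_subhom}, then pass to general $d \ge 2$ using Lemma~\ref{lem:first_proj}. The only cosmetic difference is that you make the permanence of topological subhomogeneity under unital isometric embeddings explicit, whereas the paper uses it tacitly.
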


\begin{proof}
  By Corollary \ref{cor:not_subhom}, the multiplier algebra of the weighted Dirichlet
  space $\cD_{1/2}$ is not topologically subhomogeneous. By Proposition \ref{prop:mult_embed} and part (c)
  of Lemma \ref{lem:H_d_embed}, $\Mult(\cD_{1/2})$ is (completely isometrically) isomorphic
  to a subalgebra of $\Mult(H^2_2)$, hence $\Mult(H^2_2)$ is not topologically subhomogeneous. The case of
  general $d$ now follows from Lemma \ref{lem:first_proj} (or from Remark \ref{rem:first_proj_cnp}).
  The additional statement is once again a consequence of the fact that the $n$-point multiplier norm
  can be computed using representations of dimension at most $n$.
\end{proof}

\section{\texorpdfstring{Applications to multipliers of $H^2_d$}{Applications to multipliers of H2d}}

\label{sec:embedding_applications}

We will use Proposition \ref{prop:mult_embed} to establish two more results regarding multipliers
of the Drury--Arveson space.

Our first application concerns the Sarason function, which was studied in \cite{AHM+17c}. If $\cH$
is a complete Nevanlinna--Pick space whose kernel $K$ is normalized at a point and if $f \in \cH$, then the Sarason function
of $f$ is defined by
\begin{equation*}
  V_f(z) = 2 \langle f, K(\cdot,z) f \rangle - \|f\|^2.
\end{equation*}
If $\cH = H^2$, then $\Re V_f$ is the Poisson integral of $|f|^2$. In particular, $f \in H^\infty$ if and only
if $\Re V_f$ is bounded.

In Theorem 4.5 of \cite{AHM+17c}, it is shown that
for a class of complete Pick spaces
including standard weighted Dirichlet spaces on the disc and the Drury--Arveson space,
boundedness of $\Re V_f$ implies that $f \in \Mult(\cH)$.
Proposition 4.8 of \cite{AHM+17c} shows that the converse
fails for the standard weighted Dirichlet spaces $\cD_{a}$ for $0 < a < 1$, that is,
there are multipliers of $\cD_a$ whose Sarason function has unbounded real part.

With the help of Proposition \ref{prop:mult_embed}, we can establish such a result
for the Drury--Arveson space as well. This answers a question of J\"org Eschmeier \cite{EschmeierPC}.
This result was also obtained with a different proof in the recent paper \cite{FX20} by Fang and Xia.

\begin{prop}
  \label{prop:da_sarason}
  For $d \ge 2$, there exist $\varphi \in \Mult(H^2_d)$ such that $\Re V_\varphi$ is unbounded.
\end{prop}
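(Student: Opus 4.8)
The plan is to transfer a known example from the weighted Dirichlet space $\cD_{1/2}$ to the Drury--Arveson space $H^2_2$ via Proposition \ref{prop:mult_embed}, and then push it up to $H^2_d$ for $d \ge 2$ using Lemma \ref{lem:first_proj}. First I would recall from Proposition 4.8 of \cite{AHM+17c} that there exists $f \in \Mult(\cD_{1/2})$ whose Sarason function $V_f$ has unbounded real part; here I use part (c) of Lemma \ref{lem:H_d_embed}, which identifies $\cH_2$ with $\cD_{1/2}$ (isometrically), so that $f \in \Mult(\cH_2)$. By Proposition \ref{prop:mult_embed}, the function $\varphi = f \circ \tau = \Phi(f)$ lies in $\Mult(H^2_2)$, where $\tau(z) = 2 z_1 z_2$. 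The claim will be that $\Re V_\varphi$ is unbounded on $\bB_2$, which should follow once I relate $V_\varphi$ on $\bB_2$ to $V_f$ on $\bD$ along the fibers of $\tau$.

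The key computation is to show that $V_{\varphi}(z)$ and $V_f(\tau(z))$ are compatible in a way that transfers unboundedness. Since $V$ in Lemma \ref{lem:H_d_embed}(a) is an isometry sending $\cH_2$ onto $H_0 \subset H^2_2$, and since the reproducing kernel of $H^2_d$ restricted to the slice $\{(u,v): uv = t/2\}$ should pull back to a multiple of the reproducing kernel of $\cH_2$, I expect an identity of the form
\begin{equation*}
  V_{\varphi}(z) = 2 \langle f, K_{\cH_2}(\cdot, \tau(z)) f \rangle - \|f\|_{\cH_2}^2 = V_f(\tau(z))
\end{equation*}
whenever $z$ lies on the appropriate one-dimensional slice, e.g.\ $z = (s,s)$ with $s \in (0,1)$ so that $\tau(z) = 2 s^2$ runs over $(0,1)$. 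Since $\tau$ maps $\bB_2$ onto $\bD$, choosing points $z$ with $\tau(z) \to w_0$ where $\Re V_f$ blows up will give $\Re V_\varphi$ unbounded. Concretely I would take a sequence $w_n \in \bD$ with $\Re V_f(w_n) \to \infty$, pick $z_n \in \bB_2$ with $\tau(z_n) = w_n$ (e.g.\ $z_n = (c_n, c_n)$ with $2 c_n^2 = w_n$ along rays, or a suitable point on the fiber), and conclude $\Re V_\varphi(z_n) = \Re V_f(w_n) \to \infty$. Finally, to get all $d \ge 2$: apply Lemma \ref{lem:first_proj} with $d' = 2$, noting that $H^2_{d'}$ restricted to $\bB_{d'}$ inside $H^2_d$ is again the Drury--Arveson space $H^2_2$, so the multiplier $\psi = \varphi \circ P \in \Mult(H^2_d)$, where $P$ projects onto the first two coordinates; and I would check that $\Re V_\psi$ remains unbounded, since plugging in points of the form $(z, 0, \ldots, 0)$ with $z \in \bB_2$ recovers $V_\varphi(z)$ (again using that the slice kernel pulls back correctly, as in the isometry $V$ of Lemma \ref{lem:first_proj}).

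The main obstacle I anticipate is verifying the slice identity $V_\varphi = V_f \circ \tau$ (and its analogue for the projection $P$) rather than merely an inequality. The Sarason function is not obviously well behaved under composition with a proper map like $\tau$, so I would need to argue carefully that, because $V f = f \circ \tau$ is an isometry onto the reducing subspace $H_0$ and the reproducing kernel $K_{H^2_d}(\cdot, w)$ for $w$ on the slice decomposes with $H_0$-component exactly $K_{\cH_2}(\cdot, \tau(w))$ (up to the normalization built into the definition of $a_n$), the inner products defining $V_\varphi(z)$ reduce to those defining $V_f(\tau(z))$. A cleaner route, if the exact identity is awkward, is to use only the implication "$\Re V_g$ bounded $\Rightarrow$" estimates: since $\tau$ and $P$ are contractive composition operations and $V$ is isometric, boundedness of $\Re V_\varphi$ on $\bB_2$ would force boundedness of $\Re V_f$ on $\bD$ (by restricting to the slice where $\tau$ is onto), contradicting the choice of $f$. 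I would write the argument in whichever of these two forms comes out shortest after checking the kernel normalization.
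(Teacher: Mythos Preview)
Your proposal is correct and follows essentially the same route as the paper: take $f \in \Mult(\cD_{1/2})$ with $\Re V_f$ unbounded (Proposition~4.8 of \cite{AHM+17c}), set $\varphi = f \circ \tau \in \Mult(H^2_2)$ via Proposition~\ref{prop:mult_embed}, establish the identity $V_\varphi = V_f \circ \tau$ and use surjectivity of $\tau$, then extend to $d \ge 2$ by composing with the coordinate projection $P$ and checking that $V_{\varphi \circ P}$ agrees with $V_\varphi$ on $\bB_2$. The paper resolves the obstacle you flag exactly as you anticipate: since $\varphi \in H_0$ and $M_\varphi$ maps $H_0^\bot$ into $H_0^\bot$, one may replace the Drury--Arveson kernel $S(\cdot,w)$ by its projection $L(\cdot,w) = P_{H_0} S(\cdot,w)$ in the inner product defining $V_\varphi$, and then uses that $L(z,w) = K_{\cD_{1/2}}(\tau(z),\tau(w))$ (a consequence of the unitarity of $V: \cD_{1/2} \to H_0$) to obtain $V_\varphi = V_f \circ \tau$ on all of $\bB_2$.
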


\begin{proof}
It suffices to construct such a multiplier $\varphi$ for $d = 2$, as the trivial extension
of $\varphi$ to $\bB_d$ for $d \ge 2$ (i.e. the function $\varphi \circ P$, where
$P$ is the projection on the first coordinates) will be a multiplier of $H^2_d$ (by Lemma \ref{lem:first_proj})
whose Sarason function agrees with that of $\varphi$ on $\bB_2$.

  By Proposition 4.8 of \cite{AHM+17c}, there exists $u \in \Mult(\cD_{1/2})$ such that $\Re V_u$ is
  unbounded in  $\bD$. Let $\tau: \bB_2 \to \bD$ be defined as in Section \ref{sec:embedding} and let
  $\varphi = u \circ \tau$. Then $\varphi \in \Mult(H^2_2)$ by Proposition \ref{prop:mult_embed}. It remains
  to show that $\Re V_\varphi$ is unbounded in $\bB_2$.

  To this end,
  let $M = \ol{\spa}\{ (z_1 z_2)^n: n \in \bN_0 \} \subset H^2_2$, let $L$ denote
  the reproducing kernel of $M$ and
  let $K$ be the reproducing kernel of $\cD_{1/2}$.
  It follows from part (c) of Lemma \ref{lem:H_d_embed} (or by direct computation) that
  \begin{equation*}
    K(\tau(z),\tau(w)) = L(z,w) \quad (z,w \in \bB_2).
  \end{equation*}
  Indeed, since $V$ is given by composition with $\tau$, we have $V^* L(\cdot,w) = K(\cdot, \tau(w))$,
  and since $V: \cD_{1/2} \to M$ is a unitary, $L(\cdot,w) = V V^* L(\cdot,w) = K(\cdot, \tau(w)) \circ \tau$.

  Since $\varphi \in M$, it is elementary to check that if $h \in M^\bot$, then $\varphi \cdot h \in M^\bot$.
  Using the fact that $L(\cdot,w) = P_M S(\cdot,w)$, where $S$ denotes the reproducing
  kernel of $H^2_d$, we therefore find that for $w \in \bB_d$,
  \begin{equation*}
    \langle \varphi, S(\cdot,w) \varphi \rangle = \langle \varphi, L(\cdot,w) \varphi  \rangle
    = \langle \varphi, (K(\cdot,\tau(w)) \circ \tau) \varphi \rangle = \langle u, K(\cdot,\tau(w))  u \rangle_{\cD_{1/2}},
  \end{equation*}
  where the last equality follows from part (a) of Lemma \ref{lem:H_d_embed} and the definition of $\varphi = u \circ \tau$.
  Therefore, $V_\varphi = V_u \circ \tau$.
  Since $\tau$ is surjective and $\Re V_u$ is unbounded, $\Re V_\varphi$ is unbounded as well.
\end{proof}

Our second application concerns subspaces of $H^2_d$ that entirely consist of multipliers.
It is a special case of a theorem of Grothendieck \cite{Grothendieck54}
that every closed infinite dimensional subspace of
$L^2(\bT)$ contains functions that are not essentially bounded; see \cite[Theorem 5.2]{Rudin91} for a fairly
elementary proof.
Thus, no infinite dimensional closed subspace of $H^2$ consists
entirely of multipliers of $H^2$. J\"org Eschmeier \cite{EschmeierPC} asked if there exist closed infinite
dimensional subspaces of $H^2_d$ that entirely consist of multipliers and that are graded in the sense that they are spanned by homogeneous polynomials. We use Proposition \ref{prop:mult_embed}
to provide a positive answer if $d \ge 4$.

\begin{cor}
  If $d \ge 4$, then the space
  \begin{equation*}
    H_0 = \ol{\spa} \{ (z_1 z_2 \cdots z_d)^n: n \in \bN_0 \} \subset H^2_d
  \end{equation*}
  is contained in $\Mult(H^2_d)$.
\end{cor}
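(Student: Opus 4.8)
The plan is to read off the statement directly from three facts already established in Section \ref{sec:embedding}: the isometric identification of $H_0$ with $\cH_d$ in Lemma \ref{lem:H_d_embed}(a), the coincidence $\Mult(\cH_d) = \cH_d$ for $d \ge 4$ in Lemma \ref{lem:H_d_embed}(e), and the complete isometric embedding of multiplier algebras in Proposition \ref{prop:mult_embed}.

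First I would recall that by Lemma \ref{lem:H_d_embed}(a) the isometry
\begin{equation*}
  V: \cH_d \to H^2_d, \quad f \mapsto f \circ \tau,
\end{equation*}
has range exactly $H_0$. Hence every $h \in H_0$ can be written as $h = f \circ \tau$ for a (unique) $f \in \cH_d$. Next, since $d \ge 4$, Lemma \ref{lem:H_d_embed}(e) gives $\Mult(\cH_d) = \cH_d$ as sets (with equivalent norms), so the function $f$ above lies in $\Mult(\cH_d)$. Finally, Proposition \ref{prop:mult_embed} says that the map $\Phi: \Mult(\cH_d) \to \Mult(H^2_d)$, $\varphi \mapsto \varphi \circ \tau$, is a unital complete isometry; in particular it maps $\Mult(\cH_d)$ into $\Mult(H^2_d)$. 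Applying $\Phi$ to $f$ yields $h = f \circ \tau = \Phi(f) \in \Mult(H^2_d)$, and since $h \in H_0$ was arbitrary we conclude $H_0 \subset \Mult(H^2_d)$.

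There is essentially no remaining obstacle: all the substantive work — the asymptotics $a_n \approx (n+1)^{(1-d)/2}$ and the resulting $\ell^1$ membership for $d \ge 4$, the combinatorial weight inequality of Lemma \ref{lem:weight_inequality}, and the Kacnelson-type domination in Corollary \ref{cor:weighted_shift_domination} underlying Proposition \ref{prop:mult_embed} — has already been carried out. The only mild point to keep in mind is that Lemma \ref{lem:H_d_embed}(e) provides only equivalence of norms, not equality, so the argument should be phrased purely at the level of sets (equivalently, boundedness of the multiplication operators), which is all the corollary asserts.
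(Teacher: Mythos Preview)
Your proof is correct and follows exactly the same approach as the paper: identify $H_0$ with $\{f\circ\tau: f\in\cH_d\}$ via Lemma~\ref{lem:H_d_embed}(a), use $\cH_d=\Mult(\cH_d)$ for $d\ge 4$ from Lemma~\ref{lem:H_d_embed}(e), and then apply Proposition~\ref{prop:mult_embed} to conclude $f\circ\tau\in\Mult(H^2_d)$.
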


\begin{proof}
  Part (a) of Lemma \ref{lem:H_d_embed} shows that $H_0 = \{f \circ \tau: f \in \cH_d \}$.
  But $\cH_d = \Mult(\cH_d)$ by part (e) of Lemma \ref{lem:H_d_embed}, so that the result
  follows from Proposition \ref{prop:mult_embed}.
\end{proof}

\section{Spaces between the Drury--Arveson space and the Hardy space}
\label{sec:spaces_between}

Recall that in the scale of spaces $\cD_a(\bB_d)$, the values
$a=1$ and $a=d$ correspond to the Drury--Arveson space and to the Hardy space, respectively.
We have already seen that $\Mult(\cD_a(\bB_d))$ is not topologically subhomogeneous
in the case $0 \le a < 1$ and also in the case $a=1$ and $d \ge 2$; see Corollary \ref{cor:ball_not_subhom}
and Corollary \ref{cor:da_not_subhom}.
On the other
hand, if $a \ge d$, then $\Mult(\cD_a(\bB_d)) = H^\infty(\bB_d)$, which is (even completely isometrically) $1$-subhomogeneous.
In this section, we study topological subhomogeneity of $\Mult(\cD_a(\bB_d))$ for $1 < a < d$.

In fact, it will occasionally be more convenient to work with an equivalent norm on $\cD_a(\bB_d)$. To this end,
let $s \in \bR$ and let $\cH_s(\bB_d)$ be the reproducing kernel Hilbert space on $\bB_d$ with
kernel
\begin{equation*}
  \sum_{n=0}^\infty (n+1)^{s} \langle z,w \rangle^n.
\end{equation*}
Equivalently,
\begin{equation*}
  \cH_s(\mathbb{B}_d) =
  \Big\{ f = \sum_{\alpha \in \mathbb{N}_0^d} \widehat{f}(\alpha) z^\alpha \in \mathcal{O}(\mathbb{B}_d): \|f\|^2
  = \sum_{\alpha \in \mathbb{N}_0^d} (|\alpha|+1)^{-s} \frac{\alpha!}{|\alpha|!} |\widehat{f}(\alpha)|^2 < \infty \Big\}.
\end{equation*}
We simply write $\cH_s = \cH_s(\bD)$.
It is well known that if $s = a - 1 \ge -1$, then $\cD_a(\bB_d) = \cH_s(\bB_d)$ with equivalence of norms.
Indeed, the weights in the description of $\mathcal{D}_a(\mathbb{B}_d)$ are comparable
to those in the description of $\mathcal{H}_s(\mathbb{B}_d)$, because for $a > 0$, we have
\begin{equation*}
  \frac{1}{\Gamma(a+n)} \approx \frac{(n+1)^{1-a}}{n!};
\end{equation*}
see for instance \cite{Wendel48}.

As in the case of the Drury--Arveson space, it is possible to embed weighted Dirichlet spaces on $\bD$
into $\cD_a(\bB_d)$ for certain values of $a$.
As in Section \ref{sec:embedding}, we let
\begin{equation*}
  \tau: \ol{\bB_d} \to \ol{\bD}, \quad z \mapsto d^{d/2} z_1 z_2 \ldots z_d.
\end{equation*}

\begin{lem}
  \label{lem:Besov_embed_space}
  Let $s \in \bR$.
  The map
  \begin{equation*}
  \cH_{s-(d-1)/2} \to \cH_s(\bB_d), \quad f \mapsto f \circ \tau,
  \end{equation*}
  is bounded and bounded below.
\end{lem}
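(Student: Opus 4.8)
The plan is to exploit that the map $V\colon f\mapsto f\circ\tau$ sends the orthogonal basis of monomials of $\cH_{s-(d-1)/2}$ to a sequence of \emph{pairwise orthogonal} vectors in $\cH_s(\bB_d)$, so that both boundedness and boundedness below reduce to a single two-sided estimate on the norms of these image vectors. First I would record that the monomials $(z^\alpha)_{\alpha\in\bN_0^d}$ form an orthogonal basis of $\cH_s(\bB_d)$ with $\|z^\alpha\|^2_{\cH_s(\bB_d)}=(|\alpha|+1)^{-s}\frac{\alpha!}{|\alpha|!}$, and similarly $\|z^n\|^2_{\cH_{s-(d-1)/2}}=(n+1)^{-(s-(d-1)/2)}$ in dimension one. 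Since $\tau^n=d^{nd/2}z^{n\one}$ and the multi-indices $n\one$, $n\in\bN_0$, are distinct, the functions $(\tau^n)_{n\in\bN_0}$ are pairwise orthogonal in $\cH_s(\bB_d)$, with
\begin{equation*}
  \|\tau^n\|^2_{\cH_s(\bB_d)}=d^{nd}(nd+1)^{-s}\,\frac{(n!)^d}{(nd)!}.
\end{equation*}

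The core step is then the asymptotic identity $\|\tau^n\|^2_{\cH_s(\bB_d)}\approx(n+1)^{-(s-(d-1)/2)}$. This is essentially a restatement of the Stirling computation already carried out in Lemma \ref{lem:H_d_embed}(b): there one has $\cH_0(\bB_d)=H^2_d$ and $\|\tau^n\|^2_{H^2_d}=d^{nd}\frac{(n!)^d}{(nd)!}\approx(n+1)^{(d-1)/2}$. Since $\cH_s(\bB_d)$ differs from $H^2_d$ only through the factor $(|\alpha|+1)^{-s}$ attached to the degree-$|\alpha|$ monomial, and $\tau^n$ is a scalar multiple of a monomial of degree $nd$, I would conclude $\|\tau^n\|^2_{\cH_s(\bB_d)}=(nd+1)^{-s}\|\tau^n\|^2_{H^2_d}\approx(n+1)^{-s}(n+1)^{(d-1)/2}=(n+1)^{-(s-(d-1)/2)}$, using $(nd+1)^{-s}\approx(n+1)^{-s}$.

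Finally, for $f=\sum_n\widehat f(n)z^n\in\cH_{s-(d-1)/2}$ the function $f\circ\tau=\sum_n\widehat f(n)\tau^n$ satisfies, by orthogonality of the $\tau^n$,
\begin{equation*}
  \|f\circ\tau\|^2_{\cH_s(\bB_d)}=\sum_{n=0}^\infty|\widehat f(n)|^2\,\|\tau^n\|^2_{\cH_s(\bB_d)}
  \approx\sum_{n=0}^\infty|\widehat f(n)|^2\,(n+1)^{-(s-(d-1)/2)}=\|f\|^2_{\cH_{s-(d-1)/2}},
\end{equation*}
with implied constants depending only on $d$ and $s$. Here the convergence of the series in $\cH_s(\bB_d)$, together with the fact that norm convergence in a reproducing kernel Hilbert space of holomorphic functions is locally uniform, identifies the limit with the holomorphic function $f\circ\tau$, so the map is well defined into $\cH_s(\bB_d)$. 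The lower bound gives boundedness below, the upper bound gives boundedness.

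I do not expect a genuine obstacle: the only step requiring real work is the two-sided asymptotic for $\|\tau^n\|^2_{\cH_s(\bB_d)}$, and that was already established (in the case $s=0$, from which the general case follows by the elementary degree-shift argument above) in Lemma \ref{lem:H_d_embed}. One small point to be careful about is that the weights of $\cH_s(\bB_d)$ and of $\cD_a(\bB_d)$ are only comparable, not equal, for $s=a-1$; but since the statement is phrased in terms of boundedness and boundedness below rather than isometry, this causes no difficulty.
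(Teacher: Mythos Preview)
Your proposal is correct and follows essentially the same approach as the paper: both arguments use orthogonality of the monomials and of the powers of $\tau$, compute $\|\tau^n\|^2_{\cH_s(\bB_d)}=d^{nd}(nd+1)^{-s}\frac{(n!)^d}{(nd)!}$, and invoke the Stirling asymptotic $d^{nd}\frac{(n!)^d}{(nd)!}\approx(n+1)^{(d-1)/2}$ to conclude $\|\tau^n\|^2_{\cH_s(\bB_d)}\approx\|z^n\|^2_{\cH_{s-(d-1)/2}}$. The paper's proof is simply more terse, omitting the well-definedness remarks you include at the end.
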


\begin{proof}
  The monomials form an orthogonal basis of $\cH_{s-(d-1)/2}$, and powers of $\tau$ are orthogonal
  in $\cH_s(\bB_d)$. Moreover, by Stirling's formula,
  we have
  \begin{align*}
    \|\tau^n\|_{\cH_s(\bB_d)}^2 &= d^{n d} \frac{(n!)^d}{(n d)!} (nd + 1)^{-s} \approx
    (n+1)^{(d-1)/2} ( n d + 1)^{-s} \approx (n+1)^{ (d-1)/2 - s} \\
                                &= \|z^n\|^2_{\cH_{s - (d-1)/2}},
  \end{align*}
  so the result follows.
\end{proof}

Our goal is to show that in analogy with Proposition \ref{prop:mult_embed},
the map $\varphi \mapsto \varphi \circ \tau$ also yields an embedding on the level of multiplier algebras.
Using notation as in Section \ref{sec:embedding}, we define for $\alpha \in \partial \bN_0^d$
a space
\begin{equation*}
  H_\alpha^{(s)} = 
  \ol{ \spa} \{ z^\beta: \beta \in X_\alpha \} \subset \cH_s(\bB_d).
\end{equation*}
Then the following generalization of Lemma \ref{lem:DA_decomp} is proved in the same way.

\begin{lem}
  \label{lem:H_s_decomp}
  Let $d \in \bN$ and $s \in \bR$. Then $\cH_s(\bB_d)$ admits an orthogonal decomposition
  \begin{equation*}
    \cH_s(\bB_d) = \bigoplus_{\alpha \in \partial \bN_0^d} H_\alpha^{(s)}
  \end{equation*}
  For each $\alpha \in \partial \bN_0^d$, the space $H_\alpha^{(s)}$ is reducing for the multiplication operator $M_\tau$,
  and the operator $M_\tau \big|_{H_\alpha^{(a)}}$ is unitarily equivalent to a unilateral weighted shift with positive weight sequence
  $(w_{k,\alpha})_{k=0}^\infty$, where
  \begin{equation*}
    \pushQED{\qed}
    w_{k, \alpha}^2 = d^d \frac{ \prod_{j=1}^d (\alpha_j+k+1)}{\prod_{j=1}^d ( |\alpha| + k d + j)}
    \Big( 1 + \frac{d}{|\alpha| + k d + 1} \Big)^{-s}. \qedhere
    \popQED
  \end{equation*}
\end{lem}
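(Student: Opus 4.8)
The plan is to follow the proof of Lemma~\ref{lem:DA_decomp} essentially verbatim; the only new ingredient is the extra $s$-dependent factor that appears when one computes the shift weights in $\cH_s(\bB_d)$ instead of in $H^2_d$.

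First I would establish the orthogonal decomposition. As in the proof of Lemma~\ref{lem:DA_decomp}, $\bN_0^d$ is the disjoint union $\bigcup_{\alpha \in \partial \bN_0^d} X_\alpha$: given $\beta \in \bN_0^d$, setting $k = \min(\beta_1,\ldots,\beta_d)$ one has $\beta - k\one \in \partial \bN_0^d$ and $\beta \in X_{\beta - k \one}$, and this $\alpha = \beta - k\one$ is uniquely determined by $\beta$. Since the monomials $(z^\beta)_{\beta \in \bN_0^d}$ form an orthogonal basis of $\cH_s(\bB_d)$, with $\|z^\beta\|^2 = (|\beta|+1)^{-s} \beta!/|\beta|!$, grouping them along this partition gives $\cH_s(\bB_d) = \bigoplus_{\alpha \in \partial \bN_0^d} H_\alpha^{(s)}$. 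The operator $M_\tau$ is multiplication by the polynomial $d^{d/2} z_1 \cdots z_d$, hence sends $z^\beta$ to $d^{d/2} z^{\beta + \one}$; since $\beta \in X_\alpha$ forces $\beta + \one \in X_\alpha$, each $H_\alpha^{(s)}$ is invariant under $M_\tau$, and because its orthogonal complement is the sum of the remaining (also invariant) summands $H_\gamma^{(s)}$, it is reducing. One should record here that $M_\tau$ is bounded; this is immediate once the weights below are shown to be uniformly bounded, which follows from the arithmetic--geometric mean inequality exactly as in Lemma~\ref{lem:weight_inequality}.

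Finally I would compute the weights. With respect to the orthonormal basis $(z^{\alpha + k\one}/\|z^{\alpha+k\one}\|)_{k \ge 0}$ of $H_\alpha^{(s)}$, the operator $M_\tau \big|_{H_\alpha^{(s)}}$ is a forward weighted shift with positive weights satisfying
\[
  w_{k,\alpha}^2 = d^d \, \frac{\|z^{\alpha + (k+1)\one}\|_{\cH_s(\bB_d)}^2}{\|z^{\alpha + k\one}\|_{\cH_s(\bB_d)}^2}.
\]
Substituting $\|z^\beta\|^2 = (|\beta|+1)^{-s} \beta!/|\beta|!$, the factorial part reproduces, verbatim as in Lemma~\ref{lem:DA_decomp}, the factor $\prod_{j=1}^d (\alpha_j + k + 1) \big/ \prod_{j=1}^d(|\alpha| + kd + j)$, while the powers $(|\beta|+1)^{-s}$ contribute the additional factor
\[
  \Big( \frac{|\alpha| + (k+1)d + 1}{|\alpha| + kd + 1} \Big)^{-s} = \Big( 1 + \frac{d}{|\alpha| + kd + 1} \Big)^{-s},
\]
which gives the asserted formula. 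I do not expect any genuine obstacle: everything is a routine transcription of the proof of Lemma~\ref{lem:DA_decomp} up to this single extra factor, the only non-bookkeeping point being the uniform boundedness of the weights (needed to know $M_\tau$ is a bona fide bounded operator), and that is handled by the same estimate used in Lemma~\ref{lem:weight_inequality}.
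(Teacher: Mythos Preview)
Your proposal is correct and matches the paper's approach exactly: the paper simply states that this lemma is ``proved in the same way'' as Lemma~\ref{lem:DA_decomp} and omits the argument, which is precisely the routine transcription you describe. One small remark: you need not appeal to Lemma~\ref{lem:weight_inequality} for the boundedness of $M_\tau$, since $\tau$ is a polynomial and $\cH_s(\bB_d)$ is a regular unitarily invariant space, so polynomials are automatically multipliers.
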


For $r \in \bN_0$ and $k \in \bN_0$, we define positive weights by
\begin{equation*}
  v_{k,r}^2 = 
  d^d \frac{(\frac{r}{d} + k + 1)^d}{\prod_{j=1}^d (r + k d + j)}
    \Big( 1 + \frac{d}{r + k d + 1} \Big)^{-s}
\end{equation*}
and
\begin{equation*}
  u_k^2 =
  d^d \frac{(k + 1)^d}{\prod_{j=1}^d (k d + j)}
    \Big( 1 + \frac{d}{d + k d + 1} \Big)^{-s}
\end{equation*}
Then the following inequalities hold.

\begin{lem}
  \label{lem:weights_inequalities}
  Let $w_{k,\alpha}$ denote the weights of Lemma \ref{lem:H_s_decomp}.
  Let $r,k \in \bN_0$ and $\alpha \in \bN_0^d$.
  \begin{enumerate}[label=\normalfont{(\alph*)}]
    \item $w_{k,\alpha} \le v_{k, |\alpha|}$.
    \item If $r = l d + t$ with $l,t \in \bN_0$, then $v_{k,r} = v_{k+l,t}$.
    \item Assume $s \le 0$. Then $v_{k,r} \le v_{k,0}$
      and thus $w_{k,\alpha} \le w_{k,0}$.
    \item Assume $s \ge 0$. If $0 \le r \le d$, then $v_{k,r} \le u_k$.
  \end{enumerate}
\end{lem}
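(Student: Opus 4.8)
The plan is to treat all four inequalities as elementary manipulations of the explicit weight formulas, building on the calculus argument already carried out in the proof of Lemma~\ref{lem:weight_inequality}. The organizing observation is that, regarded as a function of a real variable $r \ge 0$, each $v_{k,r}^2$ factors as a product $A(r) E(r)$ of two non-negative functions, namely the ``algebraic factor''
\[
  A(r) = d^d \frac{(\tfrac rd + k + 1)^d}{\prod_{j=1}^d (r + kd + j)}
\]
and the ``extra factor''
\[
  E(r) = \Big( 1 + \frac{d}{r + kd + 1} \Big)^{-s},
\]
and that the monotonicity of each is transparent. Indeed, the proof of Lemma~\ref{lem:weight_inequality} shows (through the estimate $f'(r) \le 0$ there) that $A$ is non-increasing on $[0,\infty)$; and since $r \mapsto 1 + \tfrac{d}{r+kd+1}$ is non-increasing while $x \mapsto x^{-s}$ is non-decreasing on $(0,\infty)$ precisely when $s \le 0$, the factor $E$ is non-increasing when $s \le 0$ and non-decreasing when $s \ge 0$. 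One checks directly from the definitions that $v_{k,0}^2 = A(0) E(0) = w_{k,0}^2$ and that $u_k^2 = A(0) E(d)$.

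For part (a), I would observe that $w_{k,\alpha}^2$ and $v_{k,|\alpha|}^2 = A(|\alpha|) E(|\alpha|)$ share the same denominator and the same extra factor (both with $r = |\alpha|$), so the inequality reduces to $\prod_{j=1}^d (\alpha_j + k + 1) \le (\tfrac{|\alpha|}{d} + k + 1)^d$, which is the inequality of arithmetic and geometric means, exactly as at the start of the proof of Lemma~\ref{lem:weight_inequality}. Part (b) is a direct substitution: writing $r = ld + t$ gives $\tfrac rd + k + 1 = \tfrac td + (k+l) + 1$ and $r + kd = (k+l) d + t$, and inserting these into the formula for $v_{k,r}^2$ produces precisely the formula for $v_{k+l,t}^2$.

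For part (c), with $s \le 0$ both $A$ and $E$ are non-increasing on $[0,\infty)$, hence so is the product $v_{k,r}^2 = A(r) E(r)$; thus $v_{k,r}^2 \le A(0) E(0) = v_{k,0}^2$. The concluding claim $w_{k,\alpha} \le w_{k,0}$ then follows by combining this with part (a): $w_{k,\alpha} \le v_{k,|\alpha|} \le v_{k,0} = w_{k,0}$. For part (d), with $s \ge 0$ the factor $A$ remains non-increasing but $E$ is now non-decreasing, so for $0 \le r \le d$ we have $A(r) \le A(0)$ and $E(r) \le E(d)$, whence $v_{k,r}^2 = A(r) E(r) \le A(0) E(d) = u_k^2$.

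None of the steps presents a genuine obstacle; the only point requiring care is the bookkeeping, in particular verifying that the mismatched endpoint evaluation $A(0) E(d)$ in part (d) reproduces exactly the definition of $u_k^2$, and that $s \le 0$ versus $s \ge 0$ is indeed the precise dividing line for the monotonicity of the extra factor $E$.
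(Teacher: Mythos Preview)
Your proof is correct and follows essentially the same approach as the paper: part (a) by AM--GM, part (b) by direct substitution, and parts (c) and (d) by combining the monotonicity of the algebraic factor $A(r)$ established in Lemma~\ref{lem:weight_inequality} with the obvious monotonicity of the extra factor $E(r)$ according to the sign of $s$. Your explicit factorization $v_{k,r}^2 = A(r)E(r)$ and the identifications $v_{k,0}^2 = A(0)E(0) = w_{k,0}^2$ and $u_k^2 = A(0)E(d)$ make the bookkeeping slightly more transparent than in the paper, but the argument is the same.
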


\begin{proof}
  (a) follows from the inequality of arithmetic and geometric means.

  (b) is obvious.
  
  (c) and (d) We saw in the proof of Lemma \ref{lem:weight_inequality} that
  \begin{equation*}
  \frac{(\frac{r}{d} + k + 1)^d}{\prod_{j=1}^d (r + k d + j)} \le
  \frac{(k+1)^d}{ \prod_{j=1}^d (kd + j)}.
  \end{equation*}
  From this inequality and from the fact that the quantity
  \begin{equation*}
    \Big( 1 + \frac{d}{r+k d + 1} \Big)^{-s} 
  \end{equation*}
  is non-increasing in $r$ if $s \le 0$ and non-decreasing in $r$ if $s \ge 0$,
  the inequalities involving $v_{k,r}$ in (c) and (d) follow. The inequality about $w_{k,\alpha}$
  in (c) then follows from the inequality about $v_{k,r}$ and part (a).
\end{proof}

We are now ready to prove the desired embedding result for multiplier algebras, which
extends Proposition \ref{prop:mult_embed}.

\begin{prop}
  \label{prop:besov_mult_embed}
  Let $s \in \bR$. Then the map
  \begin{equation*}
    \Mult(\cH_{s-(d-1)/2}) \to \Mult(\cH_s(\bB_d)), \quad \varphi \mapsto \varphi \circ \tau,
  \end{equation*}
  is a completely bounded isomorphism onto its image.

  In particular, if $a \ge \frac{d-1}{2}$, then
  \begin{equation*}
    \Mult(\cD_{a-(d-1)/2}) \to \Mult(\cD_a(\bB_d)), \quad \varphi \mapsto \varphi \circ \tau,
  \end{equation*}
  is a completely bounded isomorphism onto its image.
\end{prop}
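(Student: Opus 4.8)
The plan is to follow the strategy of Proposition \ref{prop:mult_embed}: decompose $\cH_s(\bB_d)$ under $M_\tau$ by Lemma \ref{lem:H_s_decomp}, dominate every summand by a single weighted shift that is similar to $M_z$ on $\cH_{s-(d-1)/2}$, and then pass from polynomials to arbitrary multipliers by a WOT--limiting argument. The genuinely new point, and the reason Lemma \ref{lem:weights_inequalities} is split into the cases $s \le 0$ and $s \ge 0$, is that for $s \ge 0$ one can no longer dominate every summand $M_\tau|_{H^{(s)}_\alpha}$ by the $\alpha = 0$ summand; instead one must produce a single ``universal'' dominating shift and use that restricting a weighted shift to an invariant (not necessarily reducing) subspace does not increase the norm of a polynomial in it, at any matrix level.

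First I would reduce to polynomials. As in the proof of Proposition \ref{prop:mult_embed}, circular invariance of $\cH_{s-(d-1)/2}$ makes the polynomials WOT-dense in the unit ball of its multiplier algebra, so both of the required complete-norm comparisons between $\varphi$ and $\varphi \circ \tau$ may be checked on matrices $[p_{ij}]$ of polynomials, for which $p_{ij}(M_\tau) = M_{p_{ij} \circ \tau}$ and $p_{ij}(M_z) = M_{p_{ij}}$. By Lemma \ref{lem:H_s_decomp}, $[p_{ij}(M_\tau)]$ is the orthogonal direct sum over $\alpha \in \partial \bN_0^d$ of the operators $[p_{ij}(M_\tau|_{H^{(s)}_\alpha})]$, and $M_\tau|_{H^{(s)}_\alpha}$ is unitarily equivalent to the unilateral weighted shift with positive weights $(w_{k,\alpha})_{k \ge 0}$. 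The $\alpha = 0$ summand is controlled by the map $V \colon \cH_{s-(d-1)/2} \to H_0^{(s)}$, $f \mapsto f \circ \tau$, which by Lemma \ref{lem:Besov_embed_space} is bounded and bounded below with range the closed span of the monomials $z^{k\one}$, and which intertwines $M_z$ with $M_\tau|_{H_0^{(s)}}$; restricting $[p_{ij}(M_\tau)]$ to the reducing subspace $(H_0^{(s)})^{\oplus n}$ and conjugating by $V$ shows $\|[p_{ij}]\|_{\Mult(\cH_{s-(d-1)/2})} \le \|V\| \, \|V^{-1}\| \, \|[p_{ij} \circ \tau]\|_{\Mult(\cH_s(\bB_d))}$, which is the estimate for which the map $\varphi \mapsto \varphi \circ \tau$ (visibly a unital algebra homomorphism) is bounded below.

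For the reverse estimate, when $s \le 0$ I would invoke Lemma \ref{lem:weights_inequalities}(c), giving $w_{k,\alpha} \le w_{k,0}$ for all $k$ and $\alpha$; Corollary \ref{cor:weighted_shift_domination} then yields $\|[p_{ij}(M_\tau|_{H^{(s)}_\alpha})]\| \le \|[p_{ij}(M_\tau|_{H_0^{(s)}})]\|$, and taking the supremum over $\alpha$ and conjugating back by $V$ gives $\|[p_{ij} \circ \tau]\|_{\Mult(\cH_s(\bB_d))} \le \|V\| \, \|V^{-1}\| \, \|[p_{ij}]\|_{\Mult(\cH_{s-(d-1)/2})}$. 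When $s \ge 0$ I would chain parts (a), (b), and (d) of Lemma \ref{lem:weights_inequalities}: writing $|\alpha| = l_\alpha d + t_\alpha$ with $0 \le t_\alpha \le d-1$, one obtains $w_{k,\alpha} \le v_{k,|\alpha|} = v_{k+l_\alpha, t_\alpha} \le u_{k+l_\alpha}$ for all $k$. Let $T$ be the unilateral weighted shift with weights $(u_j)_{j \ge 0}$ on $\ell^2(\bN_0)$ with orthonormal basis $(f_j)_{j \ge 0}$; the shift with weights $(u_{k+l_\alpha})_{k \ge 0}$ is exactly the restriction of $T$ to the $T$-invariant subspace $\ol{\spa}\{f_j : j \ge l_\alpha\}$, so Corollary \ref{cor:weighted_shift_domination} together with the fact that a polynomial in the restriction of $T$ to an invariant subspace is the restriction of that polynomial in $T$ gives $\|[p_{ij}(M_\tau|_{H^{(s)}_\alpha})]\| \le \|[p_{ij}(T)]\|$ for every $\alpha$, whence $\|[p_{ij} \circ \tau]\|_{\Mult(\cH_s(\bB_d))} \le \|[p_{ij}(T)]\|$. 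It then remains to identify $T$: it is unitarily equivalent to $M_z$ on the weighted sequence space whose squared norm of $z^n$ is $\prod_{j=0}^{n-1} u_j^2$, and a short computation using Stirling's formula applied to $d^{nd}\frac{(n!)^d}{(nd)!}$ together with the telescoping identity $\prod_{j=0}^{n-1}\bigl(1 + \frac{d}{jd+d+1}\bigr) = \frac{(n+1)d+1}{d+1}$ shows this equals, up to constants depending only on $d$ and $s$, the quantity $(n+1)^{(d-1)/2 - s} = \|z^n\|^2_{\cH_{s-(d-1)/2}}$. Hence the diagonal operator matching monomials is a bounded, bounded-below intertwiner of $M_z$ on $\cH_{s-(d-1)/2}$ with $T$, so $\|[p_{ij}(T)]\| \lesssim \|[p_{ij}]\|_{\Mult(\cH_{s-(d-1)/2})}$. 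Combining with the bounded-below direction of the previous paragraph proves the first assertion, and the second follows since $\cD_a(\bB_d) = \cH_{a-1}(\bB_d)$ and $\cD_{a-(d-1)/2} = \cH_{(a-1)-(d-1)/2}$ with equivalent norms whenever $a \ge \frac{d-1}{2}$.

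The main obstacle is the case $s \ge 0$. There the correct choice of the auxiliary shift $T$ — which is precisely what the inequalities in Lemma \ref{lem:weights_inequalities}(a),(b),(d) are built to supply — the reduction of its tails back to $T$ via the invariant-subspace trick, and the asymptotic identification of $T$ with $M_z$ on $\cH_{s-(d-1)/2}$ are the points requiring real care. The case $s \le 0$, the bounded-below direction, and the passage from polynomials to general multipliers are routine given the machinery already developed for Proposition \ref{prop:mult_embed}.
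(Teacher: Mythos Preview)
Your proposal is correct and follows essentially the same approach as the paper: reduce to polynomials, decompose via Lemma \ref{lem:H_s_decomp}, handle $s \le 0$ with Lemma \ref{lem:weights_inequalities}(c), and for $s \ge 0$ chain parts (a), (b), (d) to dominate each summand by (a tail of) a single shift with weights $(u_k)$, which is then identified up to similarity with $M_z$ on $\cH_{s-(d-1)/2}$ via Stirling and the telescoping product. Your argument is slightly more streamlined in that you pass directly from $w_{k,\alpha} \le u_{k+l_\alpha}$ to the dominating shift rather than routing through the intermediate shifts $S_{|\alpha|}$ and $S_t$ as the paper does, but the substance is identical.
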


\begin{proof}
  The second statement follows from the first statement and the equality
  $\cD_a(\bB_d) = \cH_{a-1}(\bB_d)$ with equivalence of norms.
  To prove the first statement,
  in light of Lemma \ref{lem:Besov_embed_space}, it suffices to show that the map is completely bounded.
  Moreover, as in the proof of Proposition \ref{prop:mult_embed},
  it suffices to show that the map is completely bounded on the space of all polynomials by an approximation argument.
  Let $M_z$ denote the operator of multiplication by $z$ on $\cH_{s-(d-1)/2}$.
  Lemma \ref{lem:H_s_decomp} further implies that it suffices to prove that
  there exists a constant $C > 0$ so that for all $\alpha \in \partial \bN_0^d$, the mapping
  \begin{equation}
    \label{eqn:besov_embed_proof}
    p(M_z) \mapsto p(M_\tau \big|_{H_\alpha^{(s)}}) \quad (p \in \bC[z])
  \end{equation}
  has completely bounded norm at most $C$.
  
  Recall from Lemma \ref{lem:H_s_decomp} that the operator $M_{\tau} \big|_{H_\alpha^{(s)}}$
  is a weighted shift with weight sequence $(w_{k,\alpha})_{k=0}^\infty$.
  We introduce the following notation. Given
  two weighted shifts $S$ and $T$ with positive weights, we write
  $S \prec T$ if the map
  \begin{equation*}
    p(T) \mapsto p(S), \quad (p \in \bC[z])
  \end{equation*}
  is completely contractive.
  
  Suppose first that $s \le 0$. Then part (c) of Lemma \ref{lem:weights_inequalities} and Corollary
  \ref{cor:weighted_shift_domination} imply that
  \begin{equation*}
    M_{\tau} \big|_{H_\alpha^{(s)}} \prec
    M_{\tau} \big|_{H_0^{(s)}}
  \end{equation*}
  for all $\alpha \in \partial \bN_0^d$. Moreover, Lemma \ref{lem:Besov_embed_space} implies that
  the map
  \begin{equation*}
    p(M_z) \mapsto p(M_{\tau} \big|_{H_0^{(s)}})
  \end{equation*}
  is completely bounded, hence the mapping in Equation \eqref{eqn:besov_embed_proof} is completely bounded
  with completely bounded norm independent of $\alpha \in \partial \bN_0^d$, which finishes the proof in the case $s \le 0$.
  
  Suppose now that $s \ge 0$.
  Let $S_r$ denote the weighted shift with weights $(v_{k,r})_{k=0}^\infty$
  and let $S$ denote the weighted shift with weights $(u_k)_{k=0}^\infty$,
  defined before Lemma \ref{lem:weights_inequalities}.
  Let $\alpha \in \bN_0^d$.
  Then by part (a) of Lemma \ref{lem:weights_inequalities} and by Corollary \ref{cor:weighted_shift_domination},
  \begin{equation*}
    M_{\tau} \big|_{H_\alpha^{(s)}} \prec S_{|\alpha|}.
  \end{equation*}
  Write $|\alpha| = l d + t$ with $l,t \in \bN_0$ and $0 \le t \le d$. Part (b) of Lemma \ref{lem:weights_inequalities}
  shows that $v_{k,|\alpha|} = v_{k+l,t}$, so that $S_{|\alpha|}$
  is unitarily equivalent to a restriction of $S_t$ to an invariant subspace and hence
  \begin{equation*}
    S_{|\alpha|} \prec S_{t}.
  \end{equation*}
  Finally, part (d) of Lemma \ref{lem:weights_inequalities} and Corollary \ref{cor:weighted_shift_domination}
  imply that 
  \begin{equation*}
    S_t \prec S,
  \end{equation*}
  so combining the last three relations, we see that
  \begin{equation*}
    M_{\tau} \big|_{H_\alpha^{(s)}} \prec S.
  \end{equation*}
  
  We finish the proof by showing that the map
  \begin{equation*}
    p(M_z) \mapsto p(S)
  \end{equation*}
  is completely bounded, where $M_z$ continues to denote the operator of multiplication by $z$
  on $\cH_{s-(d-1)/2}$.
  To this end, let $\cK$ be the space of power series with orthogonal basis $(z^n)_{n=0}^\infty$
  and norm
  \begin{equation*}
    \|z^n\|^2_{\cK} = d^{n d} \frac{(n!)^d}{(n d)!} \Big(1 + \frac{n d}{d+1} \Big)^{-s}.
  \end{equation*}
  Since
  \begin{equation*}
    \frac{\|z^{k+1}\|^2_{\cK}}{\|z^k\|^2_{\cK}} = u_k^2,
  \end{equation*}
  we see that $S$ is unitarily equivalent to the operator of multiplication by $z$ on $\cK$.
  Moreover, by Stirling's formula,
  \begin{equation*}
    \|z^n\|_{\cK}^2 \approx (n+1)^{(d-1)/2} (n+1)^{-s} = \|z^n\|^2_{\cH_{s-(d-1)/2}},
  \end{equation*}
  so that $\cK = \cH_{s-(d-1)/2}$ with equivalence of norms,
  hence $S$ and $M_z$ are similar.
\end{proof}

We can now show that multiplier algebras of some spaces
between the Drury--Arveson space and the Hardy space are not subhomogeneous.

\begin{cor}
  \label{cor:between_spaces_subhom}
  Let $d \in \bN$ and let $0 \le a < \frac{d+1}{2}$. Then $\Mult(\cD_a(\bB_d))$
  is not topologically subhomogeneous.
\end{cor}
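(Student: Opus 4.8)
The plan is to deduce the corollary from the one-dimensional results of Section~\ref{sec:top_sub_disc} by composing the two embeddings already established. First I would dispose of the range $0 \le a < 1$: here Corollary~\ref{cor:ball_not_subhom} already asserts that $\Mult(\cD_a(\bB_d))$ is not topologically subhomogeneous in every dimension, so nothing remains to be done. Thus I may assume $1 \le a < \frac{d+1}{2}$, which in particular forces $d \ge 2$.

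For such $a$, the key step is to choose the right intermediate dimension. I would set $d' = \lfloor 2a - 1 \rfloor + 1$. Since $a \ge 1$ we have $2a-1 \ge 1$, hence $d' \ge 2$; and since $a < \frac{d+1}{2}$ means $2a - 1 < d$, we get $\lfloor 2a-1\rfloor \le d-1$ and therefore $d' \le d$. This $d'$ is designed so that $2a - 1 < d' \le 2a$, which is precisely the assertion that $b := a - \frac{d'-1}{2}$ lies in $[0,1)$. Consequently Proposition~\ref{prop:besov_mult_embed}, applied in dimension $d'$ (its hypothesis $a \ge \frac{d'-1}{2}$ being exactly $b \ge 0$), shows that $\varphi \mapsto \varphi \circ \tau$ is a completely bounded isomorphism of $\Mult(\cD_b)$ onto a subalgebra of $\Mult(\cD_a(\bB_{d'}))$.

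The rest is formal. By Proposition~\ref{prop:dirichlet_subhom} (when $b = 0$, i.e.\ the classical Dirichlet space) or by Corollary~\ref{cor:not_subhom} (when $0 < b < 1$), the algebra $\Mult(\cD_b)$ is not topologically subhomogeneous. I would then invoke the permanence properties of topological subhomogeneity --- it passes to subalgebras, and it is preserved by bounded isomorphisms with bounded inverse, up to an explicit change of the two constants in the definition (obtained by restricting a representation of the ambient algebra to a subalgebra, respectively by precomposing with the isomorphism) --- to conclude that $\Mult(\cD_a(\bB_{d'}))$ is not topologically subhomogeneous. Finally, since the kernel of $\cD_a(\bB_d)$ expands as a power series in $\langle z,w\rangle$ with strictly positive coefficients and restricts on $\bB_{d'} \times \bB_{d'}$ to the kernel of $\cD_a(\bB_{d'})$, Lemma~\ref{lem:first_proj} provides a completely isometric embedding $\Mult(\cD_a(\bB_{d'})) \hookrightarrow \Mult(\cD_a(\bB_d))$, and one last application of the permanence property gives the claim.

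The one point I expect to require genuine care is the elementary bookkeeping around the choice of $d'$: one must guarantee $2 \le d' \le d$ and $0 \le a - \frac{d'-1}{2} < 1$ simultaneously, and it is worth verifying the edge cases $a = 1$ and $a$ close to $\frac{d+1}{2}$ by hand. There is no analytic obstacle beyond this, since Proposition~\ref{prop:besov_mult_embed} and Lemma~\ref{lem:first_proj} carry all the weight.
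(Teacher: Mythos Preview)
Your proof is correct and uses the same two ingredients as the paper---Proposition~\ref{prop:besov_mult_embed} to embed $\Mult(\cD_b)$ with $b\in[0,1)$ into $\Mult(\cD_a(\bB_{d'}))$, and Lemma~\ref{lem:first_proj} to pass from $\bB_{d'}$ to $\bB_d$. The only difference is organizational: the paper runs an induction on $d$ (handling the range $\tfrac{d-1}{2}\le a<\tfrac{d+1}{2}$ at each step and lifting smaller $a$ from dimension $d-1$), whereas you unwind that induction by directly specifying the terminal dimension $d'=\lfloor 2a-1\rfloor+1$.
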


\begin{proof}
  We combine two arguments. Firstly,
  if $\frac{d-1}{2} \le a < \frac{d+1}{2}$, then Proposition \ref{prop:besov_mult_embed} shows
  that $\Mult(\cD_{a}(\bB_d))$ contains a subalgebra isomorphic to $\Mult(\cD_{a-(d-1)/2})$.
  Since $0 \le a- \frac{d-1}{2} < 1$, the algebra $\Mult(\cD_{a - (d-1)/2})$ is not
  topologically subhomogeneous by Corollary \ref{cor:not_subhom} and Proposition \ref{prop:dirichlet_subhom}.
  Thus, $\Mult(\cD_a(\bB_d))$ is not topologically subhomogeneous for $\frac{d-1}{2} \le a < \frac{d+1}{2}$.

  Secondly, if $\Mult(\cD_a(\bB_{d}))$ is not topologically subhomogeneous, then Lemma \ref{lem:first_proj}
  implies that $\Mult(\cD_a(\bB_{d+1}))$ is not topologically subhomogeneous either. Combining
  these two statements, the result follows by an obvious induction on $d$.
\end{proof}

Corollary \ref{cor:between_spaces_subhom} leaves open the question of subhomogeneity
of $\Mult(\cD_a(\bB_d))$
in the range $\frac{d+1}{2} \le a < d$. We now show that in this range,
the multiplier norm is not comparable to the supremum norm, from which we deduce that $\Mult(\cD_a(\bB_d))$
is at least not topologically $1$-subhomogeneous. To this end, we will use inner functions on the unit ball.
Recall that a bounded holomorphic function $f: \bB_d \to \bC$ is said to be inner
if $\lim_{r \to 1} |f(r z)| = 1$ for almost every $z \in \partial \bB_d$.

\begin{lem}
  \label{lem:inner}
  Let $0 \le a < d$ and let $f$ be a non-constant inner function on $\bB_d$.
  Then $\lim_{n \to \infty} \|f^n\|_{\cD_a(\bB_d)} = \infty$.
\end{lem}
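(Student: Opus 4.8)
The plan is to compare the $\cD_a(\bB_d)$-norm with the norm of the Hardy space of the ball one homogeneous degree at a time, and then to argue that the homogeneous expansion of $f^n$ escapes to ever higher degrees as $n \to \infty$. For a holomorphic $g$ on $\bB_d$, write $g = \sum_{k=0}^\infty P_k g$ for its homogeneous expansion, $P_k g$ being the summand of degree $k$, and let $H^2 = \cD_d(\bB_d)$ denote the Hardy space of the ball, i.e.\ the space of holomorphic functions with square-integrable boundary values with respect to the normalized surface measure $\sigma$ on $\partial \bB_d$. Using the identity $\cD_a(\bB_d) = \cH_{a-1}(\bB_d)$ with equivalence of norms, together with $H^2 = \cD_d(\bB_d) = \cH_{d-1}(\bB_d)$, the explicit weights in these descriptions give
\begin{equation*}
  \|g\|_{\cD_a(\bB_d)}^2 \approx \sum_{k=0}^\infty (k+1)^{d-a} \, \|P_k g\|_{H^2}^2,
\end{equation*}
with implied constants depending only on $a$ and $d$ (this uses the standard asymptotics $\Gamma(k+x)/\Gamma(k+y) \sim k^{x-y}$). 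Since $f$ is inner, $|f^n| = 1$ $\sigma$-a.e.\ on $\partial \bB_d$, so $f^n \in H^2$ with $\sum_{k} \|P_k f^n\|_{H^2}^2 = \|f^n\|_{H^2}^2 = 1$ for every $n \in \bN$.

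The heart of the matter will be the claim that for each fixed $M \in \bN$ one has $\sum_{k=0}^{M} \|P_k f^n\|_{H^2}^2 \to 0$ as $n \to \infty$. Granting this, fix $M$ and choose $N$ with $\sum_{k \le M} \|P_k f^n\|_{H^2}^2 \le \tfrac12$ for $n \ge N$; then $\sum_{k > M} \|P_k f^n\|_{H^2}^2 \ge \tfrac12$, and since $d - a > 0$,
\begin{equation*}
  \|f^n\|_{\cD_a(\bB_d)}^2 \gtrsim \sum_{k > M} (k+1)^{d-a} \|P_k f^n\|_{H^2}^2 \ge \tfrac12 (M+1)^{d-a}
\end{equation*}
for all $n \ge N$. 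As $M$ is arbitrary and $d - a > 0$, this forces $\|f^n\|_{\cD_a(\bB_d)} \to \infty$.

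To prove the claim, note that $P_{\le M} := \sum_{k=0}^M P_k$ has finite rank on $H^2$, so it suffices to show that $f^n \to 0$ weakly in $H^2$; weak convergence together with finiteness of the rank then yields $\|P_{\le M} f^n\|_{H^2} \to 0$. Since $(f^n)$ is bounded in $H^2$ and polynomials are dense there, it is enough to check that $\langle f^n, z^\alpha \rangle_{H^2} = \|z^\alpha\|_{H^2}^2 \, \widehat{f^n}(\alpha) \to 0$ for every multi-index $\alpha$, i.e.\ that the Taylor coefficients of $f^n$ tend to $0$. This is precisely where the hypothesis that $f$ is non-constant and inner enters: then $\|f\|_{\infty} = 1$, so by the maximum modulus principle $|f(z)| < 1$ for all $z \in \bB_d$, whence $M_r := \max_{\|z\| \le r} |f(z)| < 1$ for each $r \in (0,1)$, and a Cauchy-type estimate on the sphere of radius $r$ gives $|\widehat{f^n}(\alpha)| \le C_{\alpha, r} M_r^n \to 0$.

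I expect the main obstacle to be exactly this weak-convergence step $f^n \to 0$: it is the only place where being inner (rather than merely a bounded non-constant multiplier) is genuinely used, and it is what makes the homogeneous mass of $f^n$ run off to infinity. The degree-by-degree norm comparison and the finite-rank reduction are routine bookkeeping.
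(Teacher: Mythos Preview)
Your proof is correct and takes a genuinely different, more elementary route than the paper. The paper reduces to $a = d - \delta$ with $\delta \in (0,1)$, passes to an equivalent norm of the form $|h(0)|^2 + \int_{\bB_d} |Rh|^2 (1-|z|^2)^{1-\delta}\,dV$, and then invokes a theorem of Stoll for inner functions $g$ stating that $\int_{\bB_d} |Rg|^2 (1-|z|^2)^{1-\delta}\,dV \approx \int_{\bB_d} (1-|g|^2)^2 (1-|z|^2)^{-1-\delta}\,dV$; monotone convergence finishes the argument, since $(1-|f^n|^2)^2 \nearrow 1$ pointwise on $\bB_d$ and the limiting integral diverges. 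Your argument avoids both the radial-derivative norm and the external reference: the degree-by-degree comparison $\|g\|_{\cD_a}^2 \approx \sum_k (k+1)^{d-a} \|P_k g\|_{H^2}^2$ together with the elementary weak convergence $f^n \to 0$ in $H^2$ (Cauchy estimates plus $\|f^n\|_{H^2}=1$) and the finite rank of $P_{\le M}$ suffices, and it handles the whole range $0 \le a < d$ uniformly rather than only the endpoint. The paper's approach, in return, gives a more quantitative picture of where the $\cD_a$-norm of an inner function actually comes from.
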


\begin{proof}
  If $0 \le b < a$, then $D_b(\mathbb{B}_d)$ is continuously contained in $\mathcal{D}_a(\mathbb{B}_d)$,
  which is easily seen by passing to the equivalent norm in $\mathcal{H}_s(\mathbb{B}_d)$ mentioned at the beginning of this section.
  Therefore, it suffices to prove the result for $a$ close to $d$, say $a = d - \delta$, where $0 < \delta < 1$.
  It is well known that another equivalent norm on $\cD_a(\bB_d)$ is given by
  \begin{equation*}
    \|h\|^2 = |h(0)|^2 + \int_{\bB_d} |R h(z)|^2 (1 - |z|^2)^{1 -\delta} \, dV(z),
  \end{equation*}
  where
  $R$ denotes the radial derivative and $V$ is the Lebesgue measure on $\bB_d$; see
  \cite[Theorem 41]{ZZ08}. Moreover, \cite[Theorem 5.3]{Stoll13} (choosing $q=2$ and $p= \frac{d}{d+2-\delta}$) implies that for inner functions $g$,
  \begin{equation*}
   \int_{\bB_d} |R g(z)|^2 (1 - |z|^2)^{1 -\delta} \, dV(z)
    \approx \int_{\bB_d} (1 - |g(z)|^2)^2 (1 - |z|^2)^{-\delta - 1} \, dV(z),
  \end{equation*}
  where the implied constants do not depend
  on the inner function $g$. Choosing $g=f^n$ above, we find that
  \begin{equation*}
    \|f^n\|_{\cD_a(\bB_d)}^2
    \gtrsim \int_{\bB_d} (1 - |f^n(z)|^2)^2 (1 - |z|^2)^{-\delta -1} \, dV(z).
  \end{equation*}
  Since $f$ is not constant, $f^n$ converges to $0$ pointwise on $\bB_d$, so the monotone
  convergence theorem implies that the last integral tends to
  \begin{equation*}
    \int_{\bB_d} (1 - |z|^2)^{-\delta - 1} \, dV(z) = \infty,
  \end{equation*}
  which finishes the proof.
\end{proof}

We are now ready to prove the announced result about $1$-subhomogeneity.

\begin{prop}
  \label{prop:not-1-subhom}
  Let $0 \le a < d$. Then the multiplier norm on $\cD_a(\bB_d)$ is not
  comparable to the supremum norm on $\bB_d$. In fact, $\Mult(\cD_a(\bB_d))$
  is not topologically $1$-subhomogeneous.
\end{prop}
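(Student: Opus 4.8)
The plan is to prove the two assertions in turn, deducing the statement on topological $1$-subhomogeneity from the incomparability of the two norms together with a computation of the spectral radius of multiplication operators.

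\emph{Incomparability of the norms.} Here I would use Lemma~\ref{lem:inner}. Fix a non-constant inner function $f$ on $\bB_d$; for $d=1$ one may take any non-constant finite Blaschke product, and for $d\ge 2$ such functions exist by the classical constructions of Aleksandrov, Hakim--Sibony and L\"ow. Since $\|1\|_{\cD_a(\bB_d)}=1$, we have $\|f^n\|_{\Mult(\cD_a(\bB_d))}\ge\|f^n\cdot 1\|_{\cD_a(\bB_d)}=\|f^n\|_{\cD_a(\bB_d)}$, and the right-hand side tends to infinity by Lemma~\ref{lem:inner} applied to $f$. As $f^n$ is again inner, $\|f^n\|_\infty=1$ for all $n$, so no constant $C$ can satisfy $\|\varphi\|_{\Mult(\cD_a(\bB_d))}\le C\|\varphi\|_\infty$ for all $\varphi$. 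If one prefers the witnesses to lie manifestly in $\Mult(\cD_a(\bB_d))$ (which is convenient for the next part, since $\cD_a(\bB_d)$ is regular unitarily invariant, so polynomials are multipliers), one can instead use homogeneous polynomials $P_k$ of degree $k$ with $\|P_k\|_\infty=1$: choosing $P_k$ ``flat'' (a suitable $\pm1$ combination of the degree-$k$ monomials, produced by a Salem--Zygmund argument) gives $\|P_k\|_{L^2(\partial\bB_d)}\gtrsim(\log k)^{-1/2}$, and since $\|P\|_{\cD_a(\bB_d)}^2\approx(k+1)^{d-a}\|P\|_{L^2(\partial\bB_d)}^2$ for homogeneous $P$ of degree $k$, one again gets $\|P_k\|_{\Mult(\cD_a(\bB_d))}\ge\|P_k\|_{\cD_a(\bB_d)}\to\infty$ because $a<d$.

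\emph{Failure of topological $1$-subhomogeneity.} A unital homomorphism from $\Mult(\cD_a(\bB_d))$ into $M_1=\bC$ is a character, and a character of a unital commutative Banach algebra is automatically contractive; hence $\Mult(\cD_a(\bB_d))$ is topologically $1$-subhomogeneous if and only if there is a constant $C$ with $\|M_\varphi\|\le C\, r(M_\varphi)$ for all $\varphi\in\Mult(\cD_a(\bB_d))$, where $r$ is the spectral radius. I would then establish that $r(M_\varphi)=\|\varphi\|_\infty$ for every multiplier $\varphi$. The inequality $r(M_\varphi)\ge\|\varphi\|_\infty$ is immediate, since $M_\varphi^{*}K(\cdot,w)=\ol{\varphi(w)}K(\cdot,w)$ for $w\in\bB_d$ forces $\ol{\varphi(\bB_d)}\subseteq\sigma(M_\varphi)$. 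For the reverse inequality it suffices to show that $M_{\varphi-\lambda}$ is invertible on $\cD_a(\bB_d)$ whenever $|\lambda|>\|\varphi\|_\infty$; as $M_{\varphi-\lambda}$ is an injective bounded operator, one only needs that $g:=h/(\varphi-\lambda)$ lies in $\cD_a(\bB_d)$ for each $h\in\cD_a(\bB_d)$. Using an equivalent norm $\|h\|^2\approx\sum_{j<m}|R^jh(0)|^2+\int_{\bB_d}|R^mh|^2(1-|z|^2)^{2m-1-(d-a)}\, dV$ with a radial derivative $R^m$ of sufficiently high order (see \cite[Theorem 41]{ZZ08}), one expands $R^mg$ by the Leibniz rule and controls the resulting terms using $|\varphi-\lambda|\ge|\lambda|-\|\varphi\|_\infty>0$ together with the fact that for a multiplier $\psi$ the measure $|R^m\psi|^2(1-|z|^2)^{2m-1-(d-a)}\, dV$ is a Carleson measure for $\cD_a(\bB_d)$ with norm $\lesssim\|M_\psi\|^2$. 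This gives $\|g\|_{\cD_a(\bB_d)}\le C_\varphi\|h\|_{\cD_a(\bB_d)}$, so $M_{\varphi-\lambda}$ is onto and hence invertible, with $M_{\varphi-\lambda}^{-1}$ equal to multiplication by $1/(\varphi-\lambda)$; equivalently, $\Mult(\cD_a(\bB_d))$ is inverse-closed for functions bounded below on $\bB_d$, so that $\sigma(M_\varphi)=\ol{\varphi(\bB_d)}$ and $r(M_\varphi)=\|\varphi\|_\infty$. Combining this with the first part, topological $1$-subhomogeneity would force $\|M_\varphi\|\le C\|\varphi\|_\infty$ for all $\varphi$, contradicting what was shown above.

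\emph{Main obstacle.} The crux is the identity $r(M_\varphi)=\|\varphi\|_\infty$, i.e.\ the inverse-closedness of the multiplier algebra; everything else is formal. The only genuine computation is the Leibniz-rule bookkeeping for the radial derivative, and a single radial derivative already suffices on the whole range $\frac{d+1}{2}\le a<d$ that is not already covered by Corollary~\ref{cor:between_spaces_subhom}, except when $d$ is large, where a higher-order radial derivative is needed. A secondary point is that the incomparability witnesses must lie in $\Mult(\cD_a(\bB_d))$: this is automatic for the polynomial witnesses, whereas for powers of an inner function one would additionally need a non-constant inner multiplier of $\cD_a(\bB_d)$, which is apparent only for $a$ close to $d$.
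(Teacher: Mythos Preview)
Your approach is correct in outline but takes a substantially harder route than the paper, and one step is only sketched.

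\textbf{Where the paper is simpler.} For the second assertion, you compute $r(M_\varphi)=\|\varphi\|_\infty$ for \emph{every} multiplier by proving inverse-closedness of $\Mult(\cD_a(\bB_d))$ via a Leibniz/Carleson-measure argument. This is true but nontrivial, and your sketch leaves real work: after expanding $R^m(h/(\varphi-\lambda))$ one must control mixed terms of the form $R^{j}h\cdot R^{i_1}\varphi\cdots R^{i_r}\varphi/(\varphi-\lambda)^{r+1}$, not just the single top-order term, and this requires the full Carleson-measure characterization of multipliers together with intermediate embeddings. The paper avoids all of this by passing to the subalgebra $A=\overline{\bC[z]}\subset\Mult(\cD_a(\bB_d))$. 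Since $(M_{z_1},\ldots,M_{z_d})$ is a row contraction (because $(1-\langle z,w\rangle)K_a(z,w)\ge 0$ for $a\ge 1$), every character of $A$ is point evaluation at some $\lambda\in\ol{\bB_d}$, so $\sup_\chi|\chi(f)|=\|f\|_\infty$ for $f\in A$ with no inverse-closedness needed. Non-subhomogeneity of $A$ then passes to the larger algebra.

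\textbf{The witness issue.} For the incomparability you first invoke powers of an inner function, but as you note yourself, it is not clear that a non-constant inner function lies in $\Mult(\cD_a(\bB_d))$ for all $a<d$, so these need not be witnesses inside the multiplier algebra. Your fallback to ``flat'' homogeneous polynomials via a Salem--Zygmund argument works, but it imports machinery not in the paper. The paper handles this more cleanly by arguing contrapositively: if $\|p\|_{\Mult}\le C\|p\|_\infty$ held for all \emph{polynomials}, then Fej\'er approximation would give $H^\infty(\bB_d)\subset\Mult(\cD_a(\bB_d))$ with the same bound, and \emph{then} one applies Lemma~\ref{lem:inner} to powers of an inner function (now guaranteed to be multipliers) to reach a contradiction. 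This yields polynomial witnesses without any probabilistic construction.

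In summary: your proof can be made to work, but the paper's two devices---restricting to the polynomial subalgebra to identify characters, and the Fej\'er contrapositive to manufacture multiplier witnesses from inner functions---replace your inverse-closedness computation and your Salem--Zygmund step with essentially elementary arguments.
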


\begin{proof}
  If $0 \le a < 1$, then the result is a special case of Corollary \ref{cor:ball_not_subhom},
  so we may assume that $1 \le a < d$.
  We first show that there does not exist a constant $C > 0$ so that
  \begin{equation*}
    \|p\|_{\Mult(\cD_a(\bB_d))} \le C \|p\|_\infty
  \end{equation*}
  for all polynomials $p$. Suppose otherwise. Since every function $f \in H^\infty(\bB_d)$
  is a pointwise limit of polynomials $p_n$ with $\|p_n\|_\infty \le \|f\|$
  (for instance the Fej\'er means of $f$), it follows that every function $f \in H^\infty(\bB_d)$
  is a multiplier of $\cD_a(\bB_d)$ with
  \begin{align*}\|f\|_{\Mult(\cD_a(\bB_d))} \le C \|f\|_\infty.\end{align*}
  On the other hand, if $f$ is a non-constant inner function, which exists by a theorem
  of Aleksandrov \cite{Aleksandrov82}, then
  $\|f^n\|_\infty \le 1$ for all $n \in \bN$, but Lemma \ref{lem:inner} implies that
  \begin{equation*}
    \|f^n\|_{\Mult(\cD_a(\bB_d))} \ge
    \|f^n\|_{\cD_a(\bB_d)} \xrightarrow{n \to \infty} \infty.
  \end{equation*}
  This contradiction proves the claim and in particular the first part of the proposition.

  Next, let $A$ denote the norm closure of the polynomials inside of $\Mult(\cD_a(\bB_d))$.
  We will show that $A$ is not topologically $1$-subhomogeneous, which will
  finish the proof.
  To this end, we determine the unital
  homomorphisms $\chi: A \to \bC$, i.e. the characters of $A$.
  We claim that if $\chi$ is a character, then
  then there exists $\lambda \in \ol{\bB_d}$ so that $\chi(f) = f(\lambda)$
  for all $f \in A$. Indeed, given a character $\chi$, define
  $\lambda = (\chi(z_1),\ldots,\chi(z_d))$.
  If $K_a$ denotes the reproducing kernel of $\cD_a(\bB_d)$, then
  \begin{equation*}
    (1 - \langle z, w\rangle) K_a(z,w) = \frac{1}{(1 - \langle z,w \rangle)^{a-1}}
  \end{equation*}
  is positive, hence the tuple $(M_{z_1},\ldots,M_{z_d})$ forms a row contraction
  on $\cD_a(\bB_d)$.
  Since characters are automatically contractive, and contractive functionals are completely contractive,
  it follows that $\lambda \in \ol{\bB_d}$. (This can also be seen by computing
  the spectrum of the tuple $M_z$.) Clearly, $\chi(f) = f(\lambda)$ for all
  polynomials $f$, and hence for all $f \in A$ by definition of $A$.

  In particular, the description of the characters of $A$ implies that
  for all $f \in A$,
  \begin{equation*}
    \sup \{ |\chi(f)| : \chi \text{ is a character of }  A \} = \|f\|_{\infty}.
  \end{equation*}
  Since the multiplier norm of polynomials is not bounded by a constant times
  the supremum norm by the first part,
  it therefore follows that there does not exist
  a constant $C > 0$ so that
  \begin{equation*}
    \|f\|_{\Mult(\cD_a(\bB_d))} \le C \sup \{ |\chi(f)| : \chi \text{ is a character of }  A \}
  \end{equation*}
  for all $f \in A$. In other words, $A$ is not topologically $1$-subhomogeneous.
\end{proof}

\section{Questions}
\label{sec:questions}

We close this article with a few questions.

\subsection{Small Dirichlet spaces}
It is well known that the scale of spaces $\cD_a$, defined for $a \in [0,\infty)$, can be extended as
follows. For $s \in \bR$, let $\cH_s$ be the reproducing kernel Hilbert space on $\bD$ with
kernel
\begin{equation*}
  \sum_{n=0}^\infty (n+1)^{s} (z \overline{w})^n.
\end{equation*}
If $s = a - 1 \ge -1$, then $\cD_a = \cH_s$ with equivalence of norms,
see the discussion at the beginning of Section \ref{sec:spaces_between}.
Moreover, if  $s \ge 0$, then $\|z\|_{\Mult(\mathcal{H}_s)} \le 1$,
so von Neumann's inequality implies that
$\Mult(\cH_s) = H^\infty$ completely isometrically for $s \ge 0$.
On the other hand, it follows from Theorem \ref{thm:ci_subhom_intro}
that $\Mult(\cH_s)$ is not completely isometrically subhomogeneous for all $s < 0$,
since the multiplier norm of $\cH_s$ it not equivalent to the supremum norm for $s < 0$.
In fact, by Theorem \ref{thm:top_subhom_intro}, the algebra $\Mult(\cH_s)$ is not topologically
subhomogeneous for $-1 \le s < 0$, but the proof does not seem to extend to the range $s < -1$.

\begin{quest}
  Is $\Mult(\cH_s)$ topologically subhomogeneous for $s < -1$?
\end{quest}

It is known that for $s < -1$, we have $\Mult(\cH_s) = \cH_s$ with equivalence of norms
(see Proposition 31 and Example 1 on page 99 in \cite{Shields74}). In particular,
$\Mult(\cH_s)$ is reflexive as a Banach space for $s < -1$. Thus, we ask more generally.

\begin{quest}
  Does there exist a unital, infinite-dimensional operator algebra that is subhomogeneous
and reflexive as a Banach space?
\end{quest}

Notice that the answer to this question is independent of what kind of subhomogeneity
is specified, as reflexivity is stable under topological isomorphisms.

\subsection{Spaces between the Drury--Arveson space and the Hardy space}

Consider the spaces $\cD_a(\bB_d)$ for $d \ge 2$,
and recall that $\cD_1(\bB_d)$ is the Drury--Arveson space $H^2_d$ and that
$\cD_d(\bB_d)$ is the Hardy space $H^2(\bB_d)$.
We saw in Corollary \ref{cor:ci_subhom_ball_dirichlet}
that $\Mult(\cD_a(\bB_d))$ is completely isometrically subhomogeneous if and only if $a \ge d$.
On the other hand, Theorem \ref{thm:top_subhom_intro} implies that $\Mult(\cD_a(\bB_d))$
is not even topologically subhomogeneous if $0 \le a < \frac{d+1}{2}$.
Moreover,
$\Mult(\cD_a(\bB_d))$ is not topologically $1$-subhomogeneous for
$0 \le a < d$ by Proposition \ref{prop:not-1-subhom}.

\begin{quest}
  \label{quest:subhom}
  Is $\Mult(\cD_a(\bB_d))$ topologically subhomogeneous if $\frac{d+1}{2} \le a < d$?
\end{quest}

The embedding of Proposition \ref{prop:besov_mult_embed} does not appear to be useful
for the study of this question. For real numbers $a$ in the range $\frac{d+1}{2} \le a < d$,
Proposition \ref{prop:besov_mult_embed} yields embeddings of multiplier algebras
that coincide with $H^\infty(\bD)$, a $1$-subhomogeneous algebra. Thus, a different argument
seems to be needed to cover the range
$\frac{d+1}{2} \le a < d$.

\subsection{Similarity of multipliers}
Suppose that $\cH$ and $\cK$ are reproducing kernel Hilbert spaces on the same set $X$.
If there exists a multiplier $\theta \in \Mult(\cH,\cK)$ so that
\begin{equation*}
  S: \cH \to \cK, \quad f \mapsto \theta f,
\end{equation*}
is bounded and invertible, then $\Mult(\cH) = \Mult(\cK)$ with equivalence of norms.
More precisely, if $C = \|S\| \|S^{-1}\|$, then
\begin{equation*}
  C^{-1} \|\varphi\|_{\Mult(\cH)} \le \|\varphi\|_{\Mult(\cK)} \le C  \|\varphi\|_{\Mult(\cH)}
\end{equation*}
for all $\varphi \in \Mult(\cH)$ (and these inequalities continue to hold for matrix-valued multipliers).
Thus, a positive answer to the following question would provide another proof of Corollary \ref{cor:many_points}
(with potentially different constants).

\begin{quest}
  Let $a \in (0,1)$ and let $n \in \bN$. Does there exist a constant $A > 0$ so that
  for all finite sets $F \subset \bD$ with $|F| = n$, there exists a multiplier
  $\theta \in \Mult( H^2 |_F, \cD_a |_F)$ with
  \begin{equation*}
    \|\theta\|_{\Mult(H^2 |_F, \cD_a |_F)} \|\theta^{-1}\|_{\Mult(\cD_a |_F, H^2 |_F)} \le A?
  \end{equation*}
\end{quest}

\subsection*{Note added in proof}
The recent preprint \cite{HRS21} shows that $\Mult(\mathcal{D}_a(\mathbb{B}_d))$ is not topologically subhomogeneous for $0 \le a < d$, thus answering Question \ref{quest:subhom}.

\bibliographystyle{amsplain}
\bibliography{literature}

\providecommand{\bysame}{\leavevmode\hbox to3em{\hrulefill}\thinspace}
\providecommand{\MR}{\relax\ifhmode\unskip\space\fi MR }
\providecommand{\MRhref}[2]{%
  \href{http://www.ams.org/mathscinet-getitem?mr=#1}{#2}
}
\providecommand{\href}[2]{#2}
\begin{thebibliography}{10}

\bibitem{Agler82}
Jim Agler, \emph{The {A}rveson extension theorem and coanalytic models},
  Integral Equations Operator Theory \textbf{5} (1982), no.~5, 608--631.
  \MR{697007 (84g:47011)}

\bibitem{AM00}
Jim Agler and John~E. McCarthy, \emph{Complete {N}evanlinna-{P}ick kernels}, J.
  Funct. Anal. \textbf{175} (2000), no.~1, 111--124. \MR{1774853 (2001h:47019)}

\bibitem{AM02}
\bysame, \emph{Pick interpolation and {H}ilbert function spaces}, Graduate
  Studies in Mathematics, vol.~44, American Mathematical Society, Providence,
  RI, 2002. \MR{1882259 (2003b:47001)}

\bibitem{Aleksandrov82}
A.~B. Aleksandrov, \emph{The existence of inner functions in a ball}, Mat. Sb.
  (N.S.) \textbf{118(160)} (1982), no.~2, 147--163, 287. \MR{658785}

\bibitem{AHM+17a}
Alexandru Aleman, Michael Hartz, John~E. McCarthy, and Stefan Richter,
  \emph{The {S}mirnov class for spaces with the complete {P}ick property}, J.
  Lond. Math. Soc. (2) \textbf{96} (2017), no.~1, 228--242.

\bibitem{AHM+17c}
\bysame, \emph{Factorizations induced by complete {N}evanlinna-{P}ick factors},
  Adv. Math. \textbf{335} (2018), 372--404.

\bibitem{AHM+18a}
\bysame, \emph{Radially weighted {B}esov spaces and the {P}ick property},
  Analysis of Operators on Function Spaces, Springer International Publishing,
  2019, pp.~29--61.

\bibitem{Archbold95}
R.~J. Archbold, \emph{On residually finite-dimensional {$C^*$}-algebras}, Proc.
  Amer. Math. Soc. \textbf{123} (1995), no.~9, 2935--2937. \MR{1301006}

\bibitem{ARS08}
N.~Arcozzi, R.~Rochberg, and E.~Sawyer, \emph{Carleson measures for the
  {D}rury-{A}rveson {H}ardy space and other {B}esov-{S}obolev spaces on complex
  balls}, Adv. Math. \textbf{218} (2008), no.~4, 1107--1180. \MR{2419381
  (2009j:46062)}

\bibitem{ARS+11}
N.~Arcozzi, R.~Rochberg, E.~Sawyer, and B.~D. Wick, \emph{Distance functions
  for reproducing kernel {H}ilbert spaces}, Function spaces in modern analysis,
  Contemp. Math., vol. 547, Amer. Math. Soc., Providence, RI, 2011, pp.~25--53.
  \MR{2856478 (2012k:46036)}

\bibitem{Aronszajn50}
N.~Aronszajn, \emph{Theory of reproducing kernels}, Trans. Amer. Math. Soc.
  \textbf{68} (1950), 337--404. \MR{0051437 (14,479c)}

\bibitem{Arveson}
William Arveson, \emph{Notes on the unique extension property}, available at
  https://math.berkeley.edu/~arveson/Dvi/unExt.pdf.

\bibitem{Arveson69}
\bysame, \emph{Subalgebras of {$C\sp{\ast} $}-algebras}, Acta Math.
  \textbf{123} (1969), 141--224. \MR{0253059 (40 \#6274)}

\bibitem{Arveson72}
\bysame, \emph{Subalgebras of {$C^{\ast} $}-algebras. {II}}, Acta Math.
  \textbf{128} (1972), no.~3-4, 271--308. \MR{0394232 (52 \#15035)}

\bibitem{Arveson08}
\bysame, \emph{The noncommutative {C}hoquet boundary}, J. Amer. Math. Soc.
  \textbf{21} (2008), no.~4, 1065--1084. \MR{2425180 (2009g:46108)}

\bibitem{BHM17}
Kelly Bickel, Michael Hartz, and John~E. McCarthy, \emph{A multiplier algebra
  functional calculus}, Trans. Amer. Math. Soc. \textbf{370} (2018), no.~12,
  8467--8482.

\bibitem{Blackadar06}
B.~Blackadar, \emph{Operator algebras}, Encyclopaedia of Mathematical Sciences,
  vol. 122, Springer-Verlag, Berlin, 2006, Theory of $C{\sp{*}}$-algebras and
  von Neumann algebras, Operator Algebras and Non-commutative Geometry, III.
  \MR{2188261 (2006k:46082)}

\bibitem{BL04}
David~P. Blecher and Christian Le~Merdy, \emph{Operator algebras and their
  modules---an operator space approach}, London Mathematical Society
  Monographs. New Series, vol.~30, The Clarendon Press, Oxford University
  Press, Oxford, 2004, Oxford Science Publications. \MR{2111973 (2006a:46070)}

\bibitem{BS84}
Leon Brown and Allen~L. Shields, \emph{Cyclic vectors in the {D}irichlet
  space}, Trans. Amer. Math. Soc. \textbf{285} (1984), no.~1, 269--303.
  \MR{748841}

\bibitem{CD16}
Rapha\"el Clou\^atre and Kenneth~R. Davidson, \emph{Duality, convexity and peak
  interpolation in the {D}rury-{A}rveson space}, Adv. Math. \textbf{295}
  (2016), 90--149. \MR{3488033}

\bibitem{CH18}
Rapha\"el Clou\^atre and Michael Hartz, \emph{Multiplier algebras of complete
  {N}evanlinna--{P}ick spaces: {D}ilations, boundary representations and
  hyperrigidity}, J. Funct. Anal. \textbf{274} (2018), no.~6, 1690--1738.
  \MR{3758546}

\bibitem{CR18}
Rapha\"{e}l Clou\^{a}tre and Christopher Ramsey, \emph{Residually
  finite-dimensional operator algebras}, J. Funct. Anal. \textbf{277} (2019),
  no.~8, 2572--2616. \MR{3990728}

\bibitem{DK15}
Kenneth~R. Davidson and Matthew Kennedy, \emph{The {C}hoquet boundary of an
  operator system}, Duke Math. J. \textbf{164} (2015), no.~15, 2989--3004.
  \MR{3430455}

\bibitem{Donoghue74}
William~F. Donoghue, Jr., \emph{Monotone matrix functions and analytic
  continuation}, Springer-Verlag, New York-Heidelberg, 1974, Die Grundlehren
  der mathematischen Wissenschaften, Band 207. \MR{0486556}

\bibitem{ER00}
Edward~G. Effros and Zhong-Jin Ruan, \emph{Operator spaces}, London
  Mathematical Society Monographs. New Series, vol.~23, Clarendon Press, New
  York, 2000. \MR{1793753 (2002a:46082)}

\bibitem{EKM+14}
Omar El-Fallah, Karim Kellay, Javad Mashreghi, and Thomas Ransford, \emph{A
  primer on the {D}irichlet space}, Cambridge Tracts in Mathematics, vol. 203,
  Cambridge University Press, Cambridge, 2014. \MR{3185375}

\bibitem{EschmeierPC}
J\"org Eschmeier, \emph{private communication}, November 2018.

\bibitem{FX20}
Quanlei Fang and Jingbo Xia, \emph{A closer look at a {P}oisson-like condition
  on the {D}rury-{A}rveson space}, Proc. Amer. Math. Soc. \textbf{148} (2020),
  no.~6, 2497--2507. \MR{4080892}

\bibitem{Garnett07}
John~B. Garnett, \emph{Bounded analytic functions}, first ed., Graduate Texts
  in Mathematics, vol. 236, Springer, New York, 2007. \MR{2261424
  (2007e:30049)}

\bibitem{GRS02}
Devin C.~V. Greene, Stefan Richter, and Carl Sundberg, \emph{The structure of
  inner multipliers on spaces with complete {N}evanlinna-{P}ick kernels}, J.
  Funct. Anal. \textbf{194} (2002), no.~2, 311--331. \MR{1934606 (2003h:46038)}

\bibitem{Grothendieck54}
A.~Grothendieck, \emph{Sur certains sous-espaces vectoriels de {$L^p$}},
  Canadian J. Math. \textbf{6} (1954), 158--160. \MR{0058867}

\bibitem{GHX04}
Kunyu Guo, Junyun Hu, and Xianmin Xu, \emph{{Toeplitz} algebras, subnormal
  tuples and rigidity on reproducing {$\mathbf C[z_1,\dots,z_d]$}-modules}, J.
  Funct. Anal. \textbf{210} (2004), no.~1, 214--247. \MR{2052120 (2005a:47007)}

\bibitem{Hartz17a}
Michael Hartz, \emph{On the isomorphism problem for multiplier algebras of
  {N}evanlinna-{P}ick spaces}, Canad. J. Math. \textbf{69} (2017), no.~1,
  54--106. \MR{3589854}

\bibitem{Hartz17}
\bysame, \emph{Henkin measures for the {D}rury-{A}rveson space}, Michigan Math.
  J. \textbf{67} (2018), no.~4, 815--826.

\bibitem{HRS21}
Michael Hartz, Stefan Richter, and Orr Shalit, \emph{von {N}eumann's inequality
  for row contractive matrix tuples}, Math. Z. (to appear), arXiv:2109.08550.

\bibitem{Hindmarsh68}
A.~C. Hindmarsh, \emph{Pick's conditions and analyticity}, Pacific J. Math.
  \textbf{27} (1968), 527--531. \MR{0240292 (39 \#1641)}

\bibitem{Kacnelson72}
V.~{\`E}. Kacnel{$'$}son, \emph{A remark on canonical factorization in certain
  spaces of analytic functions}, Zap. Nau\v cn. Sem. Leningrad. Otdel. Mat.
  Inst. Steklov (LOMI) \textbf{30} (1972), 163--164, Investigations on linear
  operators and the theory of functions, III. \MR{0338819}

\bibitem{MP10}
Meghna Mittal and Vern~I. Paulsen, \emph{Operator algebras of functions}, J.
  Funct. Anal. \textbf{258} (2010), no.~9, 3195--3225. \MR{2595740
  (2011h:47150)}

\bibitem{PR16}
Vern~I. Paulsen and Mrinal Raghupathi, \emph{An introduction to the theory of
  reproducing kernel {H}ilbert spaces}, Cambridge Studies in Advanced
  Mathematics, vol. 152, Cambridge University Press, 2016.

\bibitem{Rudin91}
Walter Rudin, \emph{Functional analysis}, second ed., International Series in
  Pure and Applied Mathematics, McGraw-Hill Inc., New York, 1991. \MR{1157815
  (92k:46001)}

\bibitem{Rudin08}
\bysame, \emph{Function theory in the unit ball of {$\mathbb C\sp n$}},
  Classics in Mathematics, Springer-Verlag, Berlin, 2008, Reprint of the 1980
  edition. \MR{2446682 (2009g:32001)}

\bibitem{Sarason67}
Donald Sarason, \emph{Generalized interpolation in {$H^{\infty }$}}, Trans.
  Amer. Math. Soc. \textbf{127} (1967), 179--203. \MR{0208383}

\bibitem{SS61}
H.~S. Shapiro and A.~L. Shields, \emph{On some interpolation problems for
  analytic functions}, Amer. J. Math. \textbf{83} (1961), 513--532. \MR{0133446
  (24 \#A3280)}

\bibitem{Shields74}
Allen~L. Shields, \emph{Weighted shift operators and analytic function theory},
  Topics in operator theory, Amer. Math. Soc., Providence, R.I., 1974,
  pp.~49--128. Math. Surveys, No. 13. \MR{0361899 (50 \#14341)}

\bibitem{Stegenga80}
David~A. Stegenga, \emph{Multipliers of the {D}irichlet space}, Illinois J.
  Math. \textbf{24} (1980), no.~1, 113--139. \MR{550655 (81a:30027)}

\bibitem{Stoll13}
Manfred Stoll, \emph{Derivatives of inner functions and {$B^{p,q}$} spaces in
  the unit ball of {$\mathbb C^n$}}, Topics in finite or infinite dimensional
  complex analysis, Tohoku University Press, Sendai, 2013, pp.~35--48.
  \MR{3074735}

\bibitem{SFB+10}
B{\'e}la Sz.-Nagy, Ciprian Foias, Hari Bercovici, and L{\'a}szl{\'o}
  K{\'e}rchy, \emph{Harmonic analysis of operators on {H}ilbert space}, second
  ed., Universitext, Springer, New York, 2010. \MR{2760647 (2012b:47001)}

\bibitem{Titchmarsh52}
E.C. Titchmarsh, \emph{{The theory of functions. 2nd ed.}}, {London: Oxford
  University Press.}, 1952 (English).

\bibitem{Wendel48}
J.~G. Wendel, \emph{Note on the gamma function}, Amer. Math. Monthly
  \textbf{55} (1948), 563--564. \MR{29448}

\bibitem{ZZ08}
Ruhan Zhao and Kehe Zhu, \emph{Theory of {B}ergman spaces in the unit ball of
  {$\mathbb C^n$}}, M\'{e}m. Soc. Math. Fr. (N.S.) (2008), no.~115, vi+103 pp.
  (2009). \MR{2537698}

\bibitem{Zhu05}
Kehe Zhu, \emph{Spaces of holomorphic functions in the unit ball}, Graduate
  Texts in Mathematics, vol. 226, Springer-Verlag, New York, 2005. \MR{2115155
  (2006d:46035)}

\end{thebibliography}

\end{document}